\newfont{\wncyr}{wncyr10 at 12pt}
\newfont{\wncyrten}{wncyr10 at 10pt}
\newcommand{\adj}{{\operatorname{adj}}}
\newcommand{\tr}{{\operatorname{tr}}}
\newcommand{\SL}{{\operatorname{SL}}}
\newcommand{\charic}{\operatorname{char}}
\newcommand{\Mat}{\operatorname{Mat}}
\newcommand{\Div}{\operatorname{Div}}
\newcommand{\GGG}{{\mathcal G}}
\newcommand{\CC}{{\mathcal C}}
\newcommand{\GG}{{\widetilde{G}}}
\newcommand{\OK}{{\mathcal O}_K}
\newcommand{\PP}{{\mathbb P}}
\newcommand{\Q}{{\mathbb Q}}
\newcommand{\C}{{\mathbb C}}
\newcommand{\Z}{{\mathbb Z}}
\newcommand{\W}{{\mathcal W}}
\newcommand{\GL}{{\operatorname{GL}}}
\newenvironment{ProofOf}[1]{\par\noindent{\em Proof of #1.}}%
                        {\hspace*{\fill}\nobreak$\Box$\par\medskip}
\newtheorem{Proposition}{Proposition}[section]
\newtheorem{thm}[Proposition]{Theorem}
\newtheorem{lemma}[Proposition]{Lemma}
\newtheorem{Corollary}[Proposition]{Corollary}
\theoremstyle{definition}
\newtheorem{Remark}[Proposition]{Remark}
\begin{document}

\title[Minimisation algorithms]%
{Some minimisation algorithms \\ in arithmetic invariant theory}

\author{Tom~Fisher}
\address{University of Cambridge,
         DPMMS, Centre for Mathematical Sciences,
         Wilberforce Road, Cambridge CB3 0WB, UK}
\email{T.A.Fisher@dpmms.cam.ac.uk}
\author{Lazar~Radi\v{c}evi\'{c}}
\address{}
\email{}

\date{6th March 2017}  

\begin{abstract}
  We extend the work of Cremona, Fisher and Stoll on minimising genus
  one curves of degrees $2,3,4,5$, to some of the other
  representations associated to genus one curves, as studied by
  Bhargava and Ho.  Specifically we describe algorithms for minimising
  bidegree $(2,2)$-forms, $3 \times 3 \times 3$ cubes and $2 \times 2
  \times 2 \times 2$ hypercubes.  We also prove a theorem relating the
  minimal discriminant to that of the Jacobian elliptic curve.
\end{abstract}

\maketitle

\renewcommand{\baselinestretch}{1.1}
\renewcommand{\arraystretch}{1.3}
\renewcommand{\theenumi}{\roman{enumi}}

\section{Introduction}

Let $F$ be a homogeneous polynomial in several variables with rational
coefficients. Then making a linear change of variables and rescaling
the polynomial by a rational number does not change the isomorphism
class of the hypersurface defined by $F$. Thus a natural question is
to find a change of variables and a rescaling of the polynomial so
that its coefficients are small integers.

More generally we may consider the following situation.
Let $\GGG$ be a product of general linear groups, acting linearly on a
$\Q$-vector space $W$. We fix a basis for $W$, and represent a vector
$w \in W$ by its vector of co-ordinates $(w_1,\ldots,w_N)$ relative to
this basis.  We refer to these co-ordinates as the {\em coefficients}.
Then given $w \in W$ we seek to find $g \in \GGG(\Q)$ such that $g
\cdot w$ has small integer coefficients.

An {\em invariant} is a polynomial $I \in \Z[w_1,\ldots,w_N]$ such
that:
\[
    I(g \cdot w)=\chi(g) I(w)
\]
for all $g \in \GGG(\C)$ and $w \in W$, where $\chi$
is a rational character on $\GGG$ (i.e. a product of determinants).
In practice there will be 
an invariant $\Delta$, which we call the {\em discriminant}, and the
elements $w \in W$ of interest will be those with $\Delta(w) \not =
0$.  We note that if $w$ has integer coefficients then $\Delta(w)$ is
an integer.  Our strategy is to first find $g \in \GGG(\Q)$ making
this discriminant as small as possible (in absolute value).  This is
known as $minimisation$. This is a local problem, in that for each
prime $p$ dividing $\Delta(w)$ we seek to minimise the $p$-adic
valuation $v_p(\Delta(w))$, without changing the valuations at the
other primes.  Once we've minimised the discriminant, the next step is
to find a transformation in $\GGG(\Z)$, making the coefficients as
small as possible. This is known as $reduction$.

This strategy has been carried out in \cite{CFS} and \cite{F5}, for
the models (i.e. collections of polynomials) defining genus one curves
of degrees $2,3,4$ and $5$.  In these cases the invariants give a
Weierstrass equation for the Jacobian of the genus one curve. In this
article, we extend these techniques to some of the other representation
associated to genus one curves, as studied in \cite{BH}.  Specifically
we describe algorithms for minimising bidegree $(2,2)$-forms, $3
\times 3 \times 3$ cubes and $2 \times 2 \times 2 \times 2$
hypercubes. In each of these cases the invariants define not only the
Jacobian elliptic curve $E$, but also one or two marked points on
$E$. One possible application of these algorithms is in computing the
Cassels-Tate pairing (see~\cite{CTP3}).

As explained below, each $(2,2)$-form $F$ determines a pair of binary
quartics $G_1, G_2$, each $3 \times 3 \times 3$ cube $S$ determines a
triple of ternary cubics $F_1, F_2, F_3$, and each $2 \times 2 \times
2 \times 2$ hypercube $H$ determines a quadruple of binary quartics
$G_1, \ldots, G_4$. Therefore a natural approach would be to minimise
and reduce the corresponding binary quartics and ternary cubics, using
the algorithms in \cite{CFS}, and then apply the transformations that
arise in this way to $F$, $S$ or $H$.  This strategy works for
reduction (which we therefore do not study further in this article),
but not for minimisation.
For example if $F \in \Z[x_1,x_2;y_1,y_2]$ is a $(2,2)$-form with $F
\equiv x_2^2 y_2^2 \pmod{p^2}$ then the binary quartics $G_1$ and
$G_2$ vanish mod $p^2$. The algorithm for minimising binary quartics
says that we should divide each $G_i$ by $p^2$.
However 
this information on its own does not tell us how to minimise~$F$.

Since minimisation is a local problem, we work in the following
setting. Let $K$ be a field with a discrete valuation $v : K^\times
\to \Z$.  We write $\OK$ for the valuation ring, and $\pi$ for a
uniformiser, i.e. an element $\pi \in K$ with $v(\pi)=1$.  The residue
field is $k = \OK/\pi \OK$. For example we could take $K = \Q$ or
$\Q_p$, and $v=v_p$ the $p$-adic valuation. In these cases $\OK =
\Z_{(p)}$ or $\Z_p$. We make no restrictions on the characteristics
of $K$ and $k$.

Since it serves as a prototype for our work, we briefly recall the
algorithm for minimising binary quartics.  See \cite{CFS} for further
details.  A binary quartic is a homogeneous polynomial of degree $4$
in two variables:
\[ G(x_1,x_2) = a x_1^4 + b x_1^3 x_2 + c x_1^2 x_2^2 + d x_1 x_2^3 +
e x_2^4. \] If $R$ is any ring then there is an action of $\GGG(R) =
R^\times \times \GL_2(R)$ on the space of binary quartics over $R$ via
\begin{equation}
\label{act:bq}
 \left[ \lambda,\begin{pmatrix} r & s  \\ t & u  \end{pmatrix} \right]: 
  G(x_1,x_2) \mapsto \lambda^2 G(rx_1+tx_2,sx_1+ux_2). 
\end{equation}
We say that binary quartics are {\em $R$-equivalent} if they belong to
the same orbit for this action.  A polynomial $I \in \Z[a,b,c,d,e]$ is
an invariant of weight $p$ if
\[ I([\lambda,A] \cdot G) = (\lambda \det A )^p I(G) \]
for all $[\lambda,A] \in \GGG(\C)$. The invariants of a binary
quartic are
\begin{align*}
I & = 12 a e - 3 b d + c^2 \\
J & = 72ace - 27ad^2 - 27b^2e + 9bcd - 2c^3 
\end{align*} 
of weights $4$ and $6$, and $\Delta = (4 I^3 - J^2)/27$ of weight $12$.

A binary quartic $G$ is {\em integral} if it has coefficients in
$\OK$, and {\em non-singular} if $\Delta(G) \not=0$.  We write $v(G)$
for the minimum of the valuations of the coefficients of~$G$.  Given a
non-singular binary quartic, we seek to find a $K$-equivalent integral
binary quartic $G$ with $v(\Delta(G))$ as small as possible.

We write $\GG$ for the reduction of $\pi^{-v(G)} G$ mod $\pi$.  If a
binary quartic $G(x_1,x_2)$ is non-minimal, then it is
$\OK$-equivalent to a binary quartic with
\[ G(x_1 , \pi^s x_2) \equiv 0 \pmod{\pi^{2s+2}} \] for some integer
$s \ge 0$. The least such integer $s$ is called the {\em slope}, and
can only take values $0,1$ and $2$.  If $v(G) \le 1$ (i.e. the slope
is positive) then $\GG$ has a unique multiple root, and if we move
this root to $(1:0)$ then $\pi^{-2} G(x_1, \pi x_2)$ is an integral
binary quartic with the same invariants, but with smaller slope.
After at most two iterations we reach a form of slope $0$.  We can
then divide through by $\pi^2$, and repeat the process until a minimal
binary quartic is obtained.

Our algorithms for minimising $(2,2)$-forms, $3 \times 3 \times 3$
cubes and $2 \times 2 \times 2 \times 2$ hypercubes are described in
Sections~\ref{sec:22}, \ref{sec:cubes} and \ref{sec:hypercubes}.  We
also give formulae for the Jacobian elliptic curve and the marked
points that work in all characteristics. (In \cite{BH} the authors
worked over a field of characteristic not $2$ or $3$, and the formulae
were not always given explicitly.)  In Section~\ref{sec:minthm} we
prove a theorem about the minimal discriminant, and describe how it is
improved by our minimisation algorithms.

\section{Bidegree (2,2)-forms}
\label{sec:22}

A {\em $(2,2)$-form} is a polynomial in $x_1,x_2, y_1,y_2$, that is
homogeneous of degree $2$ in both sets of variables.  We can view a
$(2,2)$-form $F$ as a binary quadratic form in $y_1, y_2$ whose
coefficients are binary quadratic forms in $x_1, x_2$:
\begin{equation*}
  F(x_1,x_2;y_1,y_2) 
  = F_1(x_1,x_2) y_1^2 + F_2(x_1,x_2) y_1 y_2 + F_3(x_1,x_2) y_2^2.
\end{equation*}
The discriminant $G_1 = F_2^2 - 4 F_1 F_3$ is then a binary quartic in
$x_1,x_2$. Switching the two sets of variables we may likewise define
a binary quartic $G_2$ in $y_1,y_2$.
It may be checked that $G_1$ and $G_2$ have the same invariants $I$
and $J$. We define $c_4(F) = I$ and $c_6(F) = J/2$. The discriminant
is $\Delta(F) = (c_4^3 - c_6^2)/1728$.

A non-zero $(2,2)$-form $F$ over a field defines a curve in
$\mathbb{P}^1 \times \mathbb{P}^1$. If $\Delta(F) \not=0$ then this
curve $\CC_F$ is a smooth curve of genus one.  It may be written as a
double cover of $\PP^1$ (ramified over the roots of $G_1$ or $G_2$) by
projecting to either factor.

Let $R$ be a ring. There is an action of $\GGG(R) = R^\times \times
\GL_2(R) \times \GL_2(R)$ on the space of (2,2)-forms over $R$ given
by
\[ [\lambda,A , B ] : F(x_1,x_2;y_1,y_2) \mapsto 
\lambda F( (x_1,x_2) A ; (y_1,y_2) B). \] We say that $(2,2)$-forms
are {\em $R$-equivalent} if they belong to the same orbit for this
action. If $[\lambda, A,B] \cdot F = F'$ then the binary quartics
$G_1$ and $G_2$ determined by $F$, and the binary quartics $G'_1$ and
$G'_2$ determined by $F'$, are related by
\begin{equation}
\label{22->2}
\begin{aligned}
G'_1 &=[\lambda \det B, A] \cdot G_1 \\
G'_2 &=[\lambda \det A, B] \cdot G_2 
\end{aligned}
\end{equation}
where the action on binary quartics is that defined in~\eqref{act:bq}.

We may represent $F$ by a $3 \times 3$ matrix via:
\begin{equation}
\label{mat:22}
F(x_1,x_2;y_1,y_2) = \begin{pmatrix} x_1^2 & x_1 x_2 & x_2^2 \end{pmatrix}
\begin{pmatrix} 
a_{11} & a_{12} & a_{13} \\
a_{21} & a_{22} & a_{23} \\
a_{31} & a_{32} & a_{33} 
\end{pmatrix} \begin{pmatrix} y_1^2 \\ y_1y_2 \\ y_2^2 \end{pmatrix}.
\end{equation}

A polynomial $I \in \Z[a_{ij}]$ is an {\em invariant} of weight $p$ if
\[
I([\lambda,A,B]\cdot F)=(\lambda \det A \det B)^{p} I(F)
\]
for all $[\lambda,A,B] \in \GGG(\C)$. In particular the polynomials
$c_4, c_6$ and $\Delta$ are invariants of weights $4$, $6$ and
$12$. Over a field of characteristic not $2$ or $3$, the invariants
determine a pair $(E,P)$ where $E$ is an elliptic curve (the Jacobian
of $\CC_F$) and $P$ is a marked point on $E$. The next lemma gives
formulae for
\[ E: \qquad y^2 + a_1 xy + a_3 y = x^3 + a_2 x^2 + a_4 x + a_6 \] and
$P=(\xi,\eta)$ that work in all characteristics.

\begin{lemma}
\label{lem:inv22}
There exist $\xi,\eta,a_1,a_2,a_3,a_4,a_6 \in \Z[a_{ij}]$ such that
\begin{enumerate}
\item We have $c_4 = b_2^2 - 24 b_4$ and $c_6 = -b_2^3 + 36 b_2 b_4 -
  216 b_6$, where $b_2 = a_1^2 + 4 a_2$, $b_4 = a_1 a_3 + 2 a_4$ and
  $b_6 = a_3^2 + 4 a_6$,
\item The polynomials $u = 12 \xi + a_1^2 + 4 a_2$ and $v = 2 \eta +
  a_1 \xi + a_3$ are invariants of weights $2$ and $3$ satisfying
  $(108 v)^2 = (3 u)^3 - 27 c_4 (3 u) - 54 c_6$.
\item We have $\eta^2 + a_1 \xi \eta + a_3 \eta = \xi^3 + a_2 \xi^2 +
  a_4 \xi + a_6$.
\end{enumerate}
\end{lemma}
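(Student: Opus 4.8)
The plan is to write down the seven polynomials $\xi,\eta,a_1,a_2,a_3,a_4,a_6 \in \Z[a_{ij}]$ explicitly and then verify (i)--(iii). Each of (i)--(iii) is an identity between elements of $\Z[a_{ij}]$ (in (ii), the statement that $u,v$ are invariants is an identity in $\Z[a_{ij}]$ after adjoining the entries of a generic $g \in \GGG$), and $\Z[a_{ij}]\hookrightarrow\Q[a_{ij}]$, so once the polynomials are exhibited it suffices to check the identities over $\Q$. The first step is therefore to pin down the formulae over a field of characteristic $0$: there, following \cite{BH}, $c_4$ and $c_6$ (built from the binary quartic $G_1 = F_2^2 - 4F_1F_3$) are a fixed rescaling of the standard $c$-invariants of a Weierstrass model of $E = \Jac(\CC_F)$, and the marked point is $P = [\OO(1,0)|_{\CC_F}] - [\OO(0,1)|_{\CC_F}] \in \Pic^0(\CC_F) \isom E$, a $K$-rational point. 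Taking $a_1=a_2=a_3=0$ and solving (i) for $b_4,b_6$ gives the short model $y^2 = x^3 - \frac{c_4}{48}x - \frac{c_6}{864}$, on which $P$ has coordinates providing $\xi,\eta$; all of this lives over $\Z[1/6][a_{ij}]$.

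The substance of the lemma is to clear the denominators at $2$ and $3$, uniformly in all characteristics. The plan is to choose suitable $a_1,a_3 \in \Z[a_{ij}]$ --- a ``partial completion of the square and the cube'' --- together with a value $b_2 = a_1^2 + 4a_2$ (so $a_2 = (b_2 - a_1^2)/4$), and then to let (i) force $b_4 = (b_2^2-c_4)/24$ and $b_6 = (-b_2^3 + 36b_2b_4 - c_6)/216$, and hence $a_4 = (b_4 - a_1a_3)/2$ and $a_6 = (b_6 - a_3^2)/4$. One has to check that, for the right choices, $a_2, a_4, a_6$ all lie in $\Z[a_{ij}]$; this amounts to congruences of Kraus type on $c_4,c_6$ --- namely $b_2^2 \equiv c_4 \pmod{24}$, $-b_2^3 + 36b_2b_4 \equiv c_6 \pmod{216}$, $b_4 \equiv a_1a_3 \pmod 2$ and $b_6 \equiv a_3^2 \pmod 4$ in $\Z[a_{ij}]$ --- which fail for a generic pair $(c_4,c_6)$ but hold here precisely because $c_4,c_6$ come from $F_2^2 - 4F_1F_3$. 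Then $3u$ and $108v$, where $u = 12\xi+b_2$ and $v = 2\eta+a_1\xi+a_3$, are the $X$- and $Y$-coordinates of $P$ on the model $Y^2 = X^3 - 27c_4X - 54c_6$, and one similarly checks that $3u$, $108v$, and so $\xi = (u-b_2)/12$ and $\eta = (v-a_1\xi-a_3)/2$, lie in $\Z[a_{ij}]$.

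Granting such integral formulae, the remaining verifications are routine. Part (iii) holds by construction, as $(\xi,\eta)$ is the point $P$ in the general Weierstrass model; and given (i) and (iii), the relation $(108v)^2 = (3u)^3 - 27c_4(3u) - 54c_6$ in (ii) is precisely the assertion that the standard substitution $(x,y)\mapsto(36x+3b_2,\ 216y+108a_1x+108a_3)$ carries $y^2 + a_1xy + a_3y = x^3 + a_2x^2 + a_4x + a_6$ onto $Y^2 = X^3 - 27c_4X - 54c_6$. That $u$ and $v$ are invariants of weights $2$ and $3$ follows because $3u, 108v$ are the coordinates of $P$ on the latter model, whose coefficients $c_4,c_6$ are invariants of weights $4,6$, so that the $X$- and $Y$-coordinates of rational points acquire weights $2$ and $3$ --- here one uses that the $\GGG$-action preserves each ruling of $\PP^1\times\PP^1$ and hence sends $P$ to the corresponding point; alternatively one checks the transformation law for $u,v$ directly on a set of generators of $\GGG$.

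The main obstacle is the denominator-clearing step: exhibiting the explicit $a_1,a_3$ and confirming the Kraus-type congruences on $c_4,c_6$ (and the $3$-integrality of $3u$). This is the part that genuinely uses the structure of a $(2,2)$-form rather than a generic pair $(c_4,c_6)$, and is not supplied by the characteristic-$0$ theory; once the polynomials have been found (with the aid of a computer algebra system) all the stated identities are checked mechanically over $\Q$.
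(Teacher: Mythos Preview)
Your plan is correct and matches the paper's approach: exhibit explicit $\xi,\eta,a_1,a_2,a_3$, solve (i) for $a_4,a_6$ and check their integrality, identify $u,v$ with the invariants $\delta_2,\delta_3$ already in \cite{BH}, and obtain (iii) from (i) and (ii) (or vice versa) via the standard Weierstrass change of variables. The paper's explicit choices turn out to be strikingly simple---for instance $a_1=-a_{22}$, $\xi=a_{11}a_{33}+a_{13}a_{31}$, $\eta=a_{11}a_{22}a_{33}$, and $v=\det(a_{ij})$---so the ``main obstacle'' you flag dissolves once one guesses these, without needing the Kraus-type congruence framework.
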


\begin{proof}
  We put $\xi = a_{11} a_{33} + a_{13} a_{31}$ and $\eta = a_{11} a_{22} a_{33}$. \\
  (i) We put
\begin{align*}
  a_1 &= -a_{22}, \\
  a_2 &= -(a_{11} a_{33}+ a_{12} a_{32} + a_{13} a_{31} + a_{21} a_{23}), \\
  a_3 &= a_{12} a_{23} a_{31} + a_{13} a_{21} a_{32} - a_{11} a_{23}
  a_{32} - a_{12} a_{21} a_{33}.
\end{align*}
Since we already defined $c_4$ and $c_6$, we may solve for $a_4$ and
$a_6$. We find 
that these too are polynomials in the $a_{ij}$ with integer coefficients. \\
(ii) The invariants $u$ and $v$ were denoted $\delta_2$ and $\delta_3$
in \cite[Section 6.1.2]{BH}. In fact we have $v = \det(a_{ij})$. \\
(iii) This follows from (i) and (ii), exactly as in \cite[Chapter III]{Sil}.
\end{proof}

Let $(E,P)$ be a pair consisting of an elliptic curve $E/K$ and a
point $0_E \not= P \in E(K)$. On a minimal Weierstrass equation for
$E$, the point $P$ has co-ordinates $(x_P,y_P)$, where either $x_P,y_P
\in \OK$ or $v(x_P) = -2r$, $v(y_P) = -3r$ for some integer $r \ge
1$. We define $\kappa(P) = 0$ in the first case, and $\kappa(P) = r$
in the second. We write $\Delta_E$ for the minimal discriminant of
$E$.

We say that a $(2,2)$-form $F$ is {\em integral} if it has
coefficients in $\OK$, and {\em non-singular} if $\Delta(F) \not=0$.

\begin{lemma}
\label{lem:lev22}
Let $F$ be a non-singular integral $(2,2)$-form. Let $(E,P)$ be the
pair specified in Lemma~\ref{lem:inv22}. Then
\[   v(\Delta(F)) = v(\Delta_E) + 12 \kappa(P) + 12 \ell(F) \]
where $\ell(F) \ge 0$ is an integer we call the {\em level}.
\end{lemma}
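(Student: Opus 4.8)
The plan is to recognise $\Delta(F)$ as the discriminant of an explicit integral Weierstrass model of $E$ carrying $P$ as an integral point, and then to invoke the theory of minimal Weierstrass equations. The quantities appearing in Lemma~\ref{lem:inv22}(i) are exactly the standard $c_4, c_6$ of the Weierstrass equation
\[ E' : \quad y^2 + a_1 x y + a_3 y = x^3 + a_2 x^2 + a_4 x + a_6 \]
with $a_1, \dots, a_6$ as in that lemma, so $\Delta(F) = \Delta(E')$. Since $a_1, \dots, a_6 \in \Z[a_{ij}]$ and $F$ is integral, $E'$ is an integral Weierstrass model of the Jacobian $E$, and by Lemma~\ref{lem:inv22}(iii) the marked point $P = (\xi,\eta)$, with $\xi = a_{11}a_{33}+a_{13}a_{31}$ and $\eta = a_{11}a_{22}a_{33}$, lies on $E'$ and has coordinates in $\OK$. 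As an integral model of $E$, the equation $E'$ differs from a minimal Weierstrass equation by a substitution $[u,r,s,t]$ with $\Delta(E') = u^{12}\Delta_E$, and minimality of $\Delta_E$ forces $m := v(u)$ to be a non-negative integer, so $v(\Delta(F)) = v(\Delta_E) + 12m$. Setting $\ell(F) := m - \kappa(P)$, the lemma is thus equivalent to the single inequality $m \ge \kappa(P)$.

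To prove $m \ge \kappa(P)$, the case $\kappa(P) = 0$ is trivial, so I would assume $\kappa(P) = \rho \ge 1$, meaning that on a minimal model $E_0$ the point $P$ has $x$-coordinate $x_P$ with $v(x_P) = -2\rho$. The key point is that the substitution relating $E'$ to $E_0$ may be chosen over $\OK$, i.e.\ with $u,r,s,t \in \OK$: this is precisely what the minimisation algorithm for Weierstrass equations produces (one descends from $E'$ to a minimal model by finitely many substitutions $[\pi,r_i,s_i,t_i]$ with entries in $\OK$), together with the fact that any two minimal Weierstrass equations over $\OK$ differ by a substitution with $u$ a unit and $r,s,t \in \OK$. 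For such a substitution the two $x$-coordinates of $P$ are related by $\xi = u^2 x_P + r$; since $\xi, r \in \OK$ we get $v(u^2 x_P) = v(\xi - r) \ge 0$, hence $v(x_P) \ge -2v(u) = -2m$. Comparing with $v(x_P) = -2\rho$ yields $\rho \le m$, and then $\ell(F) = m - \kappa(P)$ is a non-negative integer with $v(\Delta(F)) = v(\Delta_E) + 12m = v(\Delta_E) + 12\kappa(P) + 12\ell(F)$, as claimed.

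The step that I expect to need the most care is the assertion that the substitution relating $E'$ to a minimal model can be taken over $\OK$. When $\charic(k) \nmid 6$ this is almost immediate — for instance it already follows from the transformation rule $u^2 b_2(E_0) = b_2(E') + 12r$ and the integrality of $b_2$ on both models — but for $\charic(k) \in \{2,3\}$ one must appeal to the precise form of Kraus's minimality criteria (or Tate's algorithm) and to the uniqueness of the minimal model over a discrete valuation ring, keeping track of curves with extra automorphisms. Everything else is the short computation above; the same structure of argument should govern the corresponding statements in Sections~\ref{sec:cubes} and~\ref{sec:hypercubes}.
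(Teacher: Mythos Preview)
Your proof is correct and is essentially the paper's argument with the details spelled out: the paper simply asserts that the smallest valuation of the discriminant of an integral Weierstrass equation on which $P$ has integral coordinates is $v(\Delta_E)+12\kappa(P)$, and your inequality $m\ge\kappa(P)$ (obtained from $\xi=u^{2}x_P+r$ with $\xi,r\in\OK$) supplies exactly this. Your worry about residue characteristics $2$ and $3$ is unnecessary: the fact that an integral Weierstrass equation is carried to a minimal one by a substitution $[u,r,s,t]$ with $u,r,s,t\in\OK$ is the standard statement in \cite[Chapter~VII, Proposition~1.3]{Sil} (already cited by the paper) and holds with no restriction on $\charic(k)$, so no appeal to Kraus or to Tate's algorithm is needed.
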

\begin{proof}
  The formulae in Lemma~\ref{lem:inv22} give an integral Weierstrass
  equation $W$ for $E$, upon which $P$ is a point with integral
  coordinates.  The smallest possible discriminant of such an equation
  is $v(\Delta_E) + 12 \kappa(P)$. Since the discriminant of $F$ is
  equal to the discriminant of $W$, the result follows.
\end{proof}

In this section we give an algorithm for minimising $(2,2)$-forms.
That is, given a non-singular $(2,2)$-form $F$ over $K$, we explain 
how to find a $K$-equivalent integral $(2,2)$-form with level
(equivalently, valuation of the discriminant) as small as possible. In
Section~\ref{sec:minthm} we show that if $\CC_F(K) \not= \emptyset$
then the minimal level is zero.

By clearing denominators, we may start with an integral $(2,2)$-form.
If this form is $K$-equivalent to an integral form of smaller level,
then our task is to find such a form explicitly.  Define $v(F)$ to be
the minimum of the valuations of the coefficients of $F$. If $v(F)\geq
1$ then we can divide through by $\pi$, reducing the level of $F$.  We
may therefore assume $v(F) = 0$.

Our algorithm for minimising $(2,2)$-forms is described by the
following theorem.

\begin{thm}
\label{thm:min22}
Let $F$ be a non-minimal $(2,2)$-form with $v(F)=0$. Let $f$ be
reduction of $F$ mod $\pi$. Then we are in one of the following three
situations.
\begin{enumerate}
\item The form $f$ factors as a product of binary quadratic forms,
  both of which have a repeated root.  By an $\OK$-equivalence we may
  assume $f=x_2^2 y_2^2$.  Then at least one of the forms
  \begin{align*}
    & \pi^{-2} F(x_1, \pi x_2; y_1, y_2) \\
    & \pi^{-2} F(x_1, x_2; y_1, \pi y_2) \\
    & \pi^{-3} F(x_1, \pi x_2; y_1, \pi y_2)
  \end{align*}
  is an integral $(2,2)$-form of smaller level.
\item The form $f$ factors as a product of binary quadratic forms,
  exactly one of which has a repeated root.  By an $\OK$-equivalence,
  and switching the two sets of variables if necessary, we may assume
  that $f=x_2^2 h(y_1,y_2)$. Then $\pi^{-1} F(x_1, \pi x_2; y_1, y_2)$
  is an integral $(2,2)$-form of the same level.
\item The curve $\CC_f \subset \PP^1 \times \PP^1$ has a unique
  singular point. By an $\OK$-equivalence, this is the point
  $((1:0),(1:0))$. Then $\pi^{-2} F(x_1, \pi x_2; y_1, \pi y_2)$ is an
  integral $(2,2)$-form of the same level.
\end{enumerate}
Moreover the $(2,2)$-form $F$ computed in (ii) or (iii) either has
$v(F) \ge 1$ or has reduction mod $\pi$ of the form specified in (i).
\end{thm}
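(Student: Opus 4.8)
The plan is to analyse the reduction $f$ of $F$ mod $\pi$ and show that the curve $\CC_f \subset \PP^1 \times \PP^1$, which is a $(2,2)$-curve, must be singular; this is forced by the hypothesis that $F$ is non-minimal, since if $\CC_f$ were smooth then $F$ would already be minimal (a non-singular reduction has $v(\Delta(f)) = 0$, so no reduction in level is possible). A singular $(2,2)$-curve in $\PP^1 \times \PP^1$ can degenerate in several ways, and I would enumerate these according to how the projections to the two $\PP^1$ factors behave. Recall that projecting $\CC_f$ to the first factor exhibits it as a double cover of $\PP^1$ branched over the roots of $G_1$ (the binary quartic discriminant), and likewise for the second factor and $G_2$. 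A singularity of $\CC_f$ corresponds to a point where the relevant binary quadratic forms in the fibre direction acquire a repeated root, and I would classify by how $f$ factors.

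First I would dispose of the reducible cases. If $f$, viewed as a binary quadratic in $(y_1,y_2)$ with coefficients in $k[x_1,x_2]$, is \emph{reducible} over $\overline{k}(x_1,x_2)$ — equivalently $G_1$ is a perfect square as a binary quartic — then $f$ factors as a product of two $(1,1)$-forms, or degenerates further. Tracking through, one finds that either $f$ is a product of two binary quadratic forms (one in each set of variables), or $f$ is a product of a $(1,1)$-form with itself, etc. The cleanest organising principle is: look at the singular locus of $\CC_f$. Case (i) is when $\CC_f$ is a union of four lines forming a ``rectangle'' — two vertical and two horizontal lines of $\PP^1\times\PP^1$ — with the two vertical ones coinciding and the two horizontal ones coinciding, i.e. $f = (\text{square in }x)(\text{square in }y)$; normalising the double lines to $x_2 = 0$ and $y_2 = 0$ gives $f = x_2^2 y_2^2$. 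Case (ii) is when exactly one pair of parallel lines degenerates to a double line: $f = x_2^2 h(y_1,y_2)$ with $h$ having distinct roots (or the symmetric situation). Case (iii) is the remaining possibility: $\CC_f$ is irreducible (or at worst a union of curves meeting transversally at one point that is the unique singularity) with a single nodal or cuspidal point, which after an $\OK$-equivalence moves to $((1:0),(1:0))$. A short argument rules out $\CC_f$ having two or more singular points in a configuration not covered by (i) or (ii): two singular points on a $(2,2)$-curve force it to be reducible in a way that collapses into case (i) or (ii).

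Next, in each case I would exhibit the promised transformation and verify it produces an integral form. In case (iii), since the singularity is at $((1:0),(1:0))$, the form $f$ has no $x_1^2 y_1^2$, $x_1^2 y_1 y_2$, $x_1 x_2 y_1^2$ terms — more precisely, writing $F = \sum a_{ij}(\text{monomials})$ with the ordering of~\eqref{mat:22}, the condition that $((1:0),(1:0))$ is a singular point of $\CC_f$ translates into $a_{11} \equiv a_{12} \equiv a_{21} \equiv 0 \pmod{\pi}$, which is exactly what is needed for $\pi^{-2} F(x_1, \pi x_2; y_1, \pi y_2)$ to be integral; the discriminant is unchanged because the transforming matrices $\Diag(1,\pi)$ contribute $\lambda \det A \det B = \pi^{-2}\cdot\pi\cdot\pi = 1$ to the weight-$12$ invariant, so the level is the same. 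Case (ii) is similar but with only one substitution $x_2 \mapsto \pi x_2$ scaled by $\pi^{-1}$, and the relation $v_p(\Delta)$ unchanged follows from $\lambda \det A \det B = \pi^{-1}\cdot\pi\cdot 1 = 1$. Case (i) requires checking that at least \emph{one} of the three listed forms is integral of strictly smaller level; here one examines the Newton polygon of $F$ with respect to the two slope parameters and uses that $F$ is non-minimal (so the level genuinely drops) to conclude that the integral-and-smaller condition cannot fail for all three simultaneously — this parallels the slope analysis for binary quartics recalled in the introduction.

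The hard part will be twofold. First, the final sentence of the theorem — that the form produced in (ii) or (iii) either has $v(F)\geq 1$ or reduces to the shape in (i) — requires showing that after the substitution the new reduction $\bar f$ is \emph{more} degenerate, specifically that its singular locus is no longer a single point transverse to the rulings but has become a ``rectangle'' configuration (or the whole form has become divisible by $\pi$). This is essentially a statement that the transformations in (ii) and (iii) make progress even though they do not lower the level: iterating them must terminate, and the only stable configurations are those of case (i) or $v(F) \geq 1$. I would prove this by a direct computation with the transformed coefficients, showing that the new $a_{11}, a_{12}, a_{21}$ (and their counterparts) all vanish mod $\pi$, forcing $\bar f = \bar a_{22}\, x_1 x_2 y_1 y_2 + \ldots$ to factor. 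Second, the case division itself — proving that a singular $(2,2)$-curve over $k$ falls into exactly these three geometric types, with no other singularity configurations possible — needs a clean classification; I would handle it by noting a $(2,2)$-curve has arithmetic genus $1$, so the sum of the $\delta$-invariants of its singularities is at most $1$, which immediately limits the possibilities to: one node or cusp (case (iii)), or the curve is non-reduced / reducible-with-a-double-component (cases (i) and (ii)), and then sorting the reducible cases by how the components are arranged in $\PP^1 \times \PP^1$ using the bidegree constraint.
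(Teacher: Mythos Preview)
Your approach has a genuine gap: you use non-minimality of $F$ only to conclude that $\CC_f$ is singular, and then try to derive everything else from the geometry of $f$ alone. This is not enough, in two concrete places.

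First, in case (iii) your integrality check is wrong. With the coefficients $a_{ij}$ labelled as in~\eqref{mat:22}, the form $\pi^{-2}F(x_1,\pi x_2;y_1,\pi y_2)$ has coefficient $\pi^{-2}a_{11}$ on $x_1^2y_1^2$, so integrality needs $v(a_{11})\ge 2$. A singular point of $\CC_f$ at $((1:0),(1:0))$ only forces $a_{11}\equiv a_{12}\equiv a_{21}\equiv 0\pmod{\pi}$, i.e.\ $v(a_{11})\ge 1$. The missing inequality $v(a_{11})\ge 2$ comes precisely from non-minimality, and you have no mechanism to extract it.

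Second, your trichotomy is not a classification of singular $(2,2)$-curves. A reduction such as $f=(x_1y_1+x_2y_2)(x_1y_2+x_2y_1)$ (two distinct $(1,1)$-forms, two nodes) or $f=g(x)h(y)$ with $g,h$ both squarefree (four nodes) or $f=\ell^2$ with $\ell$ an irreducible $(1,1)$-form fits none of (i)--(iii), and your ``short argument'' that two singular points force cases (i) or (ii) is simply false. These reductions \emph{can} occur for minimal $F$ (compare Remark~\ref{remFG}), so ruling them out is exactly where non-minimality must do real work. Your arithmetic-genus bound $\sum\delta_P\le 1$ applies only to reduced irreducible curves and does not control the reducible or non-reduced cases.

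The paper proceeds in the opposite direction. Starting from the hypothesis that some $[\lambda,A_1,A_2]\in\GGG(K)$ lowers the level, one puts the $A_i$ in Smith normal form to obtain an \emph{admissible pair} $(a,b)$, meaning $\pi^{-a-b-1}F(x_1,\pi^a x_2;y_1,\pi^b y_2)$ is integral. A short combinatorial lemma (Lemma~\ref{lem:wts22}) reduces to $(a,b)\in\{(1,0),(0,1),(1,1),(2,1),(1,2)\}$. Each such pair imposes a grid of lower bounds on the $v(a_{ij})$ strong enough to force both the shape of $f$ claimed in (i)--(iii) \emph{and} the integrality of the stated transform (in particular one gets $v(a_{11})\ge 3$ when $(1,1)$ is admissible). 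The ``Moreover'' clause then falls out by inspecting the valuations of the transformed coefficients case by case. Your geometric classification could perhaps be salvaged, but only by importing this admissible-pair machinery to supply the missing valuation inequalities.
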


\begin{Remark}
\label{remFG}
  Let $F$ be an integral $(2,2)$-form, with associated binary quartics
  $G_1$ and $G_2$. It is clear by~\eqref{22->2} that if either $G_1$
  or $G_2$ is minimal then $F$ is minimal.  However the converse is
  not true. For example if $F \equiv (x_1y_1 + x_2y_2)^2
  \pmod{\pi^2}$, then $F$ is minimal by Theorem~\ref{thm:min22}, yet
  we have $G_1 \equiv G_2 \equiv 0 \pmod{\pi^2}$.
\end{Remark}

Exactly as in the case of binary quartics, any non-minimal $(2,2)$-form $F$ is
$\OK$-equivalent to a form whose level can be reduced using diagonal
transformations. Indeed, suppose that $[\lambda,A_1,A_2] \in \GGG(K)$
is a transformation reducing the level.  By clearing denominators, we
may assume that the $A_i$ have entries in $\OK$, not all in $\pi \OK$.
Then writing these matrices in Smith normal form we have
$A_i=Q_iD_iP_i$ where $P_i,Q_i \in \GL_2(\OK)$ and
\[ D_1 = \begin{pmatrix} 1 & 0 \\ 0 & \pi^{a} \end{pmatrix},
\quad 
D_2 = \begin{pmatrix} 1 & 0 \\ 0 & \pi^{b} \end{pmatrix}, \] 
for some integers $a,b \ge 0$. Replacing
$F$ by an $\OK$-equivalent form, it follows that
\[ \pi^{-a-b-1} F(x_1,\pi^{a}x_2;y_1,\pi^{b}y_2) \] 
is an integral $(2,2)$-form.  We say that the pair $(a,b)$ is {\em
  admissible} for $F$.

\begin{lemma}
\label{lem:wts22}
Let $F$ be an integral $(2,2)$-form.  If some pair $(a,b)$ is
admissible for $F$ then at least one
of the following pairs is admissible:
  \[(0,0),\,\, (1,0),\,\, (0,1),\,\, (1,1),\,\, (2,1),\,\, (1,2).\]
\end{lemma}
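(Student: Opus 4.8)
The plan is to analyze the effect of an admissible transformation on the Newton polygon (support) of $F$, reducing an arbitrary admissible pair $(a,b)$ to one of the six listed pairs by monotonicity and a reflection argument. Write $F = \sum c_{ij} m_i(x) n_j(y)$ where $m_0, m_1, m_2 = x_1^2, x_1 x_2, x_2^2$ and similarly for the $n_j$ in $y$. Scaling $x_2 \mapsto \pi^a x_2$, $y_2 \mapsto \pi^b y_2$ multiplies the coefficient of $m_i n_j$ by $\pi^{ai + bj}$ (reading $m_1$ as weight $1$, $m_2$ as weight $2$, etc.), so $(a,b)$ is admissible precisely when
\[
v(c_{ij}) + a i + b j \ge a + b + 1 \quad \text{for all } i,j,
\]
i.e. the twisted form is still integral and divisible by $\pi$. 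Equivalently, letting $S = \{(i,j) : c_{ij} \neq 0 \bmod \pi^{ai+bj-a-b}\}$... more cleanly: $(a,b)$ is admissible iff every monomial $m_i n_j$ with $v(c_{ij}) = 0$ satisfies $ai + bj \ge a + b + 1$, and every monomial with $v(c_{ij}) \ge 1$ imposes the weaker inequality — so the binding constraints come only from the reduction $f$ of $F$ mod $\pi$.

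First I would reduce to studying $f$ alone: $(a,b)$ is admissible for $F$ only if $ai + bj \ge a+b+1$ for every $(i,j)$ in the support of $f$, i.e. for every monomial $m_i n_j$ actually appearing in $f$. (The converse direction — that this combinatorial condition suffices for some $\OK$-equivalent $F$ — is not needed here; we only need the implication that an admissible $(a,b)$ forces this constraint on $\mathrm{supp}(f)$.) Next, the key monotonicity observation: if $(a,b)$ is admissible and $a \ge 2$, I claim $(a-1, b)$ is also admissible, unless doing so would fail the constraint at some support monomial with $i = 0$. But a support monomial with $i = 0$ is of the form $n_j$ alone (no $x_2$), and for such a monomial the constraint $bj \ge a + b + 1$ with $j \le 2$ forces $b \ge a+b+1$, impossible; so no support monomial has $i=0$ when $a \ge 1$, hence $m_i$ always carries weight $i \ge 1$ on the support, and decreasing $a$ by $1$ decreases the left side by $i \ge 1$ and the right side by exactly $1$, preserving all inequalities. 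Thus $a$ can be lowered to $\le 1$, and symmetrically $b$ to $\le 1$ — EXCEPT that lowering one may have been what allowed the other's bound to drop; so I would instead argue: from any admissible $(a,b)$ we may decrease $\max(a,b)$ as long as it exceeds... Let me restructure: decrease $a$ while $a \ge 2$ (valid as shown, since $i \ge 1$ on support whenever $a \ge 1$), reaching $a \le 1$; then with $a$ fixed, decrease $b$ while $b \ge 2$ AND the pair is not $(1,2)$, using that $j \ge 1$ on support whenever $b \ge 1$. This reaches $a \le 1$ and $b \le 2$, and symmetrically $a \le 2, b \le 1$; combining, the only surviving pairs are the six listed: $(0,0), (1,0), (0,1), (1,1), (2,1), (1,2)$.

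The main obstacle I anticipate is the bookkeeping in the monotonicity step when both coordinates are large simultaneously — specifically ruling out that one needs $(2,2)$ or larger. The resolution is the observation above that $i \ge 1$ throughout the support once $a \ge 1$ and $j \ge 1$ once $b \ge 1$: a monomial $m_0 n_j = y$-only term would need $bj \ge a+b+1$ with $j \le 2$, forcing $(2-1)b \ge a+1 \ge a+1$, i.e. $b \ge a+1$ when $j = 2$ (possible!) — so actually $m_0 n_2 = y_2^2$ CAN appear, e.g. when $f = x_2^2 y_2^2$ with admissible $(1,1)$, or more relevantly when we worry about $f$ containing $x_2^2$ and $y_2^2$ but little else. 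I would handle this by a direct case check on which of the nine monomials $m_i n_j$ lie in $\mathrm{supp}(f)$, using that $(a,b)$ admissible means $\mathrm{supp}(f) \subseteq \{(i,j) : ai+bj \ge a+b+1\}$; the complement condition $a+b \ge ai+bj$ fails... rather, one checks that for each choice of $\mathrm{supp}(f)$ compatible with SOME admissible $(a,b)$, one of the six pairs already works, by computing $\min\{ai+bj : (i,j) \in \mathrm{supp}(f)\}$ for the six candidates and verifying it is $\ge a+b+1$ for at least one. This is a finite verification over the at most $2^9$ subsets (in practice far fewer, since $\mathrm{supp}(f)$ must avoid $(0,0) = (x_1^2)(y_1^2)$ and be "upward-closed-complement" shaped), and is the part that requires care rather than cleverness.
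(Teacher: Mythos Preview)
Your monotonicity step contains a genuine gap. You claim that if $(a,b)$ is admissible with $a \ge 2$ then so is $(a-1,b)$, arguing that on $\mathrm{supp}(f)$ every monomial has $x$-weight $i \ge 1$, so ``decreasing $a$ by $1$ decreases the left side by $i \ge 1$ and the right side by exactly $1$, preserving all inequalities.'' The direction is backwards: to preserve $ai+bj \ge a+b+1$ after replacing $a$ by $a-1$ you need the left side to drop by \emph{at most} as much as the right, i.e.\ $i \le 1$, not $i \ge 1$. The case $i=2$ is the real obstruction. Concretely, take $F$ with $v(c_{20})=0$ (the coefficient of $x_2^2 y_1^2$) and all other coefficients of large valuation. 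Then $(3,2)$ is admissible (the $c_{20}$-bound is $\max(-3+2+1,0)=0$) but $(2,2)$ is not (the bound becomes $\max(-2+2+1,0)=1$), so your step $(3,2)\to(2,2)$ fails --- and $(2,2)$ is not on the target list anyway.

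Your fallback case-check on $\mathrm{supp}(f)$ is also insufficient as stated: verifying that one of the six candidates satisfies the support inequalities does not establish admissibility for $F$, since admissibility of $(a',b')$ constrains \emph{all} coefficients, not only those with $v(c_{ij})=0$. The remark that ``the binding constraints come only from $f$'' is true for the \emph{given} admissible $(a,b)$ but says nothing about a different pair $(a',b')$.

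The paper's proof avoids both issues by comparing the bound functions directly: writing $L_{ij}(a,b)=\max\bigl((1-i)a+(1-j)b+1,\,0\bigr)$, it shows that for every $(a,b)$ one of the six listed $(a',b')$ satisfies $L_{ij}(a',b')\le L_{ij}(a,b)$ for all $i,j$, via a five-line case split on whether $a,b$ vanish and on the sign of $a-b$ (e.g.\ $a>b>0$ forces $(2,1)$ to work). If you prefer a descent argument, the correct move is to replace $(a,b)$ by $(a-1,b-1)$ whenever $a,b\ge 2$; one checks all nine $L_{ij}$ weakly decrease in that step, reducing to $\min(a,b)\le 1$, after which the same short case split finishes.
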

\begin{proof}
  The coefficients of $F$, arranged as in \eqref{mat:22}, have
  valuations satisfying
  \begin{align*}
    &\geq a+b+1 & &\geq a+1 &&\geq a-b+1 \\
    &\geq b+1 & &\geq 1 &&\geq -b+1 \\
    &\geq -a+b+1 & &\geq -a+1 &&\geq -a-b+1.
  \end{align*}
  Conversely, if the valuations satisfy these inequalities then the
  pair $(a,b)$ is admissible. If $a=b=0$ then we are done as $(0,0)$
  is on the list. If $a \geq 1, b=0$ or $a=0, b \geq 1$, then $(1,0)$
  or $(0,1)$ is admissible.  If $a=b>0$, then $(1,1)$ is
  admissible. If $a>b>0$ or $b>a>0$, then $(2,1)$ or $(1,2)$ is
  admissible.
\end{proof}

\begin{ProofOf}{Theorem~\ref{thm:min22}}
  For the proof we are free to replace the $(2,2)$-form $F$ by an
  $\OK$-equivalent form.  Indeed the transformations specified in the
  statement of the theorem induce well-defined maps on
  $\OK$-equivalence classes, as may be verified using
  \cite[Lemma~4.1]{CFS}.  We may therefore assume that one of the
  pairs $(a,b)$ listed in Lemma~\ref{lem:wts22} is admissible for
  $F$. Since $v(F) = 0$ we cannot have $a=b=0$. By switching the two
  sets of variables, we may assume $a \ge b$. This leaves us with
  three cases.

\medskip

\paragraph{{\bf Case 1}}
We assume $(1,0)$ is admissible for $F$. The coefficients of $F$ have
valuations satisfying
\[
\begin{matrix}
\geq 2 & \geq 2 &\geq 2\\
\geq 1 & \geq 1 &\geq 1\\
\geq 0 & \geq 0 &\geq 0 
\end{matrix}
\]
We have $f = x_2^2 h(y_1,y_2)$ where $h$ is a binary quadratic
form. If $h$ has a repeated root, then the first transformation in (i)
decreases the level.  Otherwise the transformation in (ii) gives a
$(2,2)$-form $F$ with $v(F) \ge 1$.

\medskip

\paragraph{{\bf Case 2}}
We assume $(1,1)$ is admissible for $F$. The coefficients of $F$ have
valuations satisfying
\[
\begin{matrix}
\geq 3 & \geq 2 &\geq 1 \\
\geq 2 & \geq 1 & \geq 0 \\
\geq 1 & \geq 0     &\geq 0 
\end{matrix}
\]
We have
\[
f=x_2y_2(\alpha x_1 y_2 + \beta x_2y_1 + \gamma x_2 y_2)  
\]
for some $\alpha,\beta,\gamma \in k$.  If $\alpha = \beta= 0$ then the
third transformation in (i) decreases the level. If exactly one of the
coefficients $\alpha$ and $\beta$ is zero then the transformation in
(ii) gives a $(2,2)$-form whose reduction mod $\pi$ is either zero, or
of the form specified in (i).  If $\alpha$ and $\beta$ are both
non-zero then $\CC_f \subset \PP^1 \times \PP^1$ has a unique singular
point at $((1:0),(1:0))$. The transformation in (iii) gives a
$(2,2)$-form $F$ with $v(F) \ge 1$.

\medskip

\paragraph{{\bf Case 3}}
We assume $(2,1)$ is admissible for $F$. The coefficients of $F$ have
valuations satisfying
\[
\begin{matrix}
\geq 4 & \geq 3 &  \ge 2 \\
\geq 2 & \geq 1 & = 0 \\
=0 &  \geq 0     &\geq 0 
\end{matrix}
\]
The two valuations indicated are zero, as we would otherwise be in
Case~1 or Case~2.  A calculation shows that $\CC_f \subset \PP^1
\times \PP^1$ has a unique singular point at $((1:0),(1:0))$.  The
transformation in (iii) gives a $(2,2)$-form whose reduction mod $\pi$
is of the form specified in (i).
\end{ProofOf}

The following lemma will be needed in Section~\ref{sec:hypercubes}, in
connection with our study of $2 \times 2 \times 2 \times 2$
hypercubes. 

\begin{lemma}
\label{lemB}
Let $F$ be a non-minimal $(2,2)$-form, and let $f = F$ 
mod $\pi$. 
\begin{enumerate}
\item If $\CC_f \subset \PP^1 \times \PP^1$ is singular at
  $((1:0),(1:0))$, then the coefficients of
  $F$ 
  have valuations satisfying
\[
\begin{matrix}
  \geq 1 & \geq 1 &\geq 1 \\
  \geq 1 & \geq 1 &\geq 0 \\
  \geq 0 & \geq 0 &\geq 0
\end{matrix}
\quad \text{ or } \quad
\begin{matrix}
  \geq 1 & \geq 1 &\geq 0 \\
  \geq 1 & \geq 1 &\geq 0 \\
  \geq 1 & \geq 0 &\geq 0
\end{matrix}
\]
\item If $f = x_2^2 y_2^2$ then the coefficients of $F$ 
  have valuations satisfying
\[
\begin{matrix}
  \geq 2 & \geq 2 &\geq 2 \\
  \geq 1 & \geq 1 &\geq 1 \\
  \geq 1 & \geq 1 &   = 0
\end{matrix}
\quad \text{ or } \quad
\begin{matrix}
  \geq 2 & \geq 1 &\geq 1 \\
  \geq 2 & \geq 1 &\geq 1 \\
  \geq 2 & \geq 1 &   = 0
\end{matrix}
\quad \text{ or } \quad
\begin{matrix}
  \geq 3 & \geq 2 &\geq 1 \\
  \geq 2 & \geq 1 &\geq 1 \\
  \geq 1 & \geq 1 &   = 0
\end{matrix}
\]
\end{enumerate}
\end{lemma}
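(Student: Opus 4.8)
The plan is to obtain both parts from the proof of Theorem~\ref{thm:min22}, together with Lemma~\ref{lem:wts22} and the symmetry interchanging the two sets of variables.

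I would start with two reductions. In part~(i) we may assume $v(F)=0$, since otherwise all $a_{ij}\in\pi\OK$ and the first displayed pattern holds; in part~(ii) the hypothesis $f=x_2^2y_2^2$ forces $v(a_{33})=0$, hence $v(F)=0$. As $F$ is non-minimal, $v(\Delta(F))\ge 12$ by Lemma~\ref{lem:lev22}, so $\Delta(f)=0$, $\CC_f$ is singular, and Theorem~\ref{thm:min22} applies. Next comes the dictionary between the reduction of the matrix~\eqref{mat:22} and $\CC_f$: working in the chart $x_1=y_1=1$ one finds that $((1{:}0),(1{:}0))$ is a singular point of $\CC_f$ iff $v(a_{11}),v(a_{12}),v(a_{21})\ge1$, that $f$ is a multiple of $x_2$ iff the first row of the matrix vanishes mod $\pi$, and that $f$ is a multiple of $y_2$ iff the first column does. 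So in part~(i) the hypothesis already gives $v(a_{11}),v(a_{12}),v(a_{21})\ge1$, and the two target patterns say exactly that $f$ is a multiple of $x_2$, resp.\ of $y_2$, with $v(a_{22})\ge1$ in addition.

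For part~(i) I would split along the three cases of Theorem~\ref{thm:min22}. In its cases~(i) and~(ii), $f$ is a product of two binary quadratics at least one of which is a perfect square, and the hypothesis that $\CC_f$ is singular at $((1{:}0),(1{:}0))$ then forces $f$ to be a multiple of $x_2^2$ or of $y_2^2$; so the first two rows, resp.\ columns, of the matrix vanish mod $\pi$, and the corresponding target pattern follows, $v(a_{22})\ge1$ being then automatic. In its case~(iii) the curve $\CC_f$ has a unique singular point, which by hypothesis sits at $((1{:}0),(1{:}0))$; since this is the standard position, the change of basis used in the proof of Theorem~\ref{thm:min22} to put it there is superfluous and one of the pairs $(1,1)$, $(2,1)$, $(1,2)$ is admissible for $F$ itself. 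A short check then shows that the valuation pattern forced by $(1,1)$ or $(2,1)$ implies the first target pattern, and that forced by $(1,2)$ implies the second.

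For part~(ii), $f=x_2^2y_2^2$ is case~(i) of Theorem~\ref{thm:min22} with the two repeated lines already in standard position, so again no change of basis is needed and one of $(1,0),(0,1),(1,1),(2,1),(1,2)$ is admissible for $F$ itself; intersecting each of these valuation patterns with the two extra vanishings imposed by $f=x_2^2y_2^2$ produces one of the three displayed patterns, with $(1,0)$ giving the first, $(0,1)$ and $(1,2)$ the second, and $(1,1)$ and $(2,1)$ the third. I expect the main obstacle to be making rigorous the ``change of basis is superfluous'' step -- that is, passing from Lemma~\ref{lem:wts22} for an $\OK$-equivalent form to Lemma~\ref{lem:wts22} for $F$ itself -- which one handles by revisiting the Smith-normal-form argument before that lemma: the non-diagonal factors occurring there serve only to align the degenerate configuration of $\CC_f$ with the coordinate flag, and in our situation that alignment is exactly the hypothesis. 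The remaining work is bounded bookkeeping with valuation inequalities.
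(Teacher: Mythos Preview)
Your approach to part~(ii) is essentially the paper's own: the proof there also establishes that $F$ itself (not merely an $\OK$-equivalent form) satisfies $F(x_1,\pi^a x_2;y_1,\pi^b y_2)\equiv 0\pmod{\pi^{a+b+1}}$ for some $(a,b)\in\{(1,0),(0,1),(1,1)\}$, by arguing that the $\OK$-equivalence must fix $(1{:}0)$ in the relevant factors and hence the admissibility transfers. The paper rules out $(2,1)$ and $(1,2)$ directly (their valuation patterns are incompatible with $f\sim x_2^2y_2^2$, so a smaller pair is already admissible), which makes your inclusion of them redundant but not wrong.

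For part~(i), however, the paper takes a quite different and shorter route. Rather than invoking the trichotomy of Theorem~\ref{thm:min22} and transferring admissibility, it exploits the explicit invariants of Lemma~\ref{lem:inv22}. Non-minimality forces $u\equiv v\equiv 0\pmod\pi$; substituting $a_{11}\equiv a_{12}\equiv a_{21}\equiv 0$ into the formulae for $u=12\xi+a_1^2+4a_2$ and $v=\det(a_{ij})$ yields $8a_{13}a_{31}+a_{22}^2\equiv 0$ and $a_{13}a_{22}a_{31}\equiv 0$, whence $a_{22}\equiv 0$. Then, since non-minimality means the Weierstrass model of Lemma~\ref{lem:inv22} is not minimal for the pair $(E,P)$, the marked point $(\xi,\eta)\equiv(a_{13}a_{31},0)$ must reduce to the singular point of $y^2\equiv x^2(x-a_{13}a_{31})$, forcing $a_{13}a_{31}\equiv 0$. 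This gives exactly the two displayed patterns with no case analysis.

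Your route via Theorem~\ref{thm:min22} does work, but the step you flag as the main obstacle is genuinely delicate: in the unique-singular-point case you need that the target pattern (not full admissibility of $(2,1)$ or $(1,2)$, which does \emph{not} transfer in general) is preserved under $\OK$-equivalences whose reductions fix $((1{:}0),(1{:}0))$. This is true---one checks that such equivalences are lower-triangular mod~$\pi$ and that target pattern~1 is stable under the induced substitution $X_1=\alpha x_1+\beta x_2$, $X_2=\gamma x_2$ (and similarly for~$y$)---but it is a real computation, not just bookkeeping. The paper's invariant-theoretic argument sidesteps this entirely and is worth knowing.
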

\begin{proof}
(i) The singular point forces $a_{11} \equiv a_{12} \equiv a_{21} \equiv 0
\pmod{\pi}$. The vanishing of the invariants $u$ and $v$ in 
Lemma~\ref{lem:inv22} gives
\[ 8 a_{13} a_{31} + a_{22}^2 \equiv a_{13}a_{22}a_{31} \equiv 0 \pmod{\pi}.\]
It follows that $a_{22} \equiv 0 \pmod{\pi}$. The same lemma shows that
$(\xi,\eta) = (a_{13} a_{31},0)$ is a singular point on the curve with 
Weierstrass equation $y^2 \equiv x^2(x - a_{13} a_{31}) \pmod{\pi}$.
Therefore $a_{13} a_{31} \equiv 0 \pmod{\pi}$.  

(ii) The proof of Theorem~\ref{thm:min22} shows that $F$ is
$\OK$-equivalent to a $(2,2)$-form $F_1$ with
\begin{equation}
\label{eqn:Ftilde}
F_1 (x_1, \pi^a x_2 ; y_1 \pi^b y_2) \equiv 0 \pmod{\pi^{a+b+1}} 
\end{equation}
for some $(a,b) = (1,0)$, $(0,1)$ or $(1,1)$. Working mod $\pi$ we
have $F_1 \equiv x_2^2 h(y_1,y_2)$, $g(x_1,x_2) y_2^2$ or
$x_2y_2(\alpha x_1 y_2 + \beta x_2y_1 + \gamma x_2 y_2)$.  In the last
case it follows from our assumption $F \equiv x_2^2 y_2^2 \pmod{\pi}$
that $\alpha = \beta = 0$. The equivalence relating $F$ and
$F_1$ must now fix the points $(x_1:x_2) =(1:0)$ mod $\pi$,
$(y_1:y_2) =(1:0)$ mod $\pi$, or both.  It follows that $F$ also
satisfies~\eqref{eqn:Ftilde}.
\end{proof}


\section{$3\times3\times3$ Rubik's cubes}
\label{sec:cubes}

We consider polynomials in $x_1,x_2,x_3, y_1, y_2, y_3,z_1, z_2, z_3$
that are linear in each of the three sets of variables. Such a form
may be represented as
\begin{equation*}
  \sum_{1\leq i,j,k \leq 3} s_{ijk} x_i y_j z_k
\end{equation*}
where $S = (s_{ijk})$ is a $3 \times 3 \times 3$ cubical matrix.
A {\em Rubik's cube} $S$ may be partitioned into three $3\times3$
matrices in three distinct ways:
\begin{enumerate}
\item $M^1 = (s_{1jk})$ is the front face, $N^1=(s_{2jk})$ is the
  middle slice and $P^1=(s_{3jk})$ is the back face.
\item $M^2 = (s_{i1k})$ is the top face, $N^2=(s_{i2k})$ is the middle
  slice and $P^2=(s_{i3k})$ is the bottom face.
\item $M^3 = (s_{ij1})$ is the left face, $N^3=(s_{ij2})$ is the
  middle slice and $P^3=(s_{ij3})$ is the right face.
        \end{enumerate}
        To each slicing $(M^i,N^i,P^i)$, we may associate a ternary
        cubic form
\begin{equation*}
  F_{i}(x,y,z)=\det(M^{i}x+N^{i}y+P^{i}z).
\end{equation*}
Following~\cite[Section 2]{CFS} we scale the invariants $c_4, c_6,
\Delta$ of a ternary cubic so that $c_4(xyz) =1$, $c_6(xyz)=-1$ and
$c_4^3 - c_6^2 = 1728\Delta$. It may be checked that the $F_i$ have
the same invariants. We define $c_4(S) = c_4(F_i)$, $c_6(S) =
c_6(F_i)$ and $\Delta(S) = \Delta(F_i)$.

If $S$ is defined over a field and $\Delta(S) \not = 0$ then each of
the $F_i$ defines a smooth curve of genus $1$ in $\PP^2$.  These
curves are isomorphic, although not in a canonical way.  (See
\cite[Section 3.2]{BH} for further details.) We write $\CC_S$ to
denote any one of them.

Let $R$ be a ring. For each $1 \le i \le 3$ there is an action of
$\GL_3(R)$ on the space of Rubik's cubes over $R$ given by
\begin{align*}
A  = (a_{ij}) : (M^i,N^i,P^i) \mapsto (a_{11}& M^i+a_{12}N^{i}+a_{13}P^{i}, \\ 
& a_{21}M^i+a_{22}N^{i}+a_{23}P^{i},a_{31}M^i+a_{32}N^{i}+a_{33}P^{i}). 
\end{align*}
These actions commute, and so give an action of $\GGG(R) =
\GL_3(R)^3$.  We say that $3 \times 3 \times 3$ cubes are {\em
  $R$-equivalent} if they belong to the same orbit for this action.
If $[A_1,A_2,A_3] \cdot S = S'$ then the associated ternary cubics are
related by
\begin{equation}
\label{333->3}
     F'_i(x,y,z) = \det(A_jA_k) F_i ((x,y,z)A_i) 
\end{equation}
where $\{i,j,k\} = \{1,2,3\}$.

A polynomial $I \in \Z[s_{ijk}]$ is an {\em invariant} of weight $p$
if
\[ I([A_1,A_2,A_3] \cdot S)=(\det A_1 \det A_2 \det A_3)^{p} I(S) \]
for all $[A_1,A_2,A_3] \in \GGG(\C)$.  In particular the polynomials
$c_4, c_6$ and $\Delta$ 
are invariants of weights $4$, $6$ and $12$. Over a field of
characteristic not $2$ or $3$, the invariants determine a pair $(E,P)$
where $E$ is an elliptic curve (the Jacobian of $\CC_S$) and $P$ is a
marked point on $E$. The next lemma gives formulae for $E$ and $P$
that work in all characteristics.

\begin{lemma}
\label{lem:inv333}
There exist $\xi,\eta,a_1,a_2,a_3,a_4,a_6 \in \Z[s_{ijk}]$ such that
\begin{enumerate}
\item We have $c_4 = b_2^2 - 24 b_4$ and $c_6 = -b_2^3 + 36 b_2 b_4 -
  216 b_6$, where $b_2 = a_1^2 + 4 a_2$, $b_4 = a_1 a_3 + 2 a_4$ and
  $b_6 = a_3^2 + 4 a_6$,
\item The polynomials $u = 12 \xi + a_1^2 + 4 a_2$ and $v = 2 \eta +
  a_1 \xi + a_3$ are invariants of weights $2$ and $3$ satisfying
  $(108 v)^2 = (3 u)^3 - 27 c_4 (3 u) - 54 c_6$.
\item We have $\eta^2 + a_1 \xi \eta + a_3 \eta = \xi^3 + a_2 \xi^2 +
  a_4 \xi + a_6$.
\end{enumerate}
\end{lemma}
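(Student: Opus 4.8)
The plan is to reduce Lemma~\ref{lem:inv333} to the already-proven Lemma~\ref{lem:inv22} by exhibiting, for a suitable open subset of the space of Rubik's cubes, a covariant map to the space of $(2,2)$-forms that is compatible with the group actions and with the invariants $c_4,c_6$. Concretely, I would first pick out one slicing, say the triple $(M^1,N^1,P^1)$ of $3\times 3$ matrices, together with the ternary cubic $F_1(x,y,z) = \det(M^1 x + N^1 y + P^1 z)$. Polarising $F_1$, or equivalently looking at the adjugate, produces a binary (or ternary) form whose coefficients are quadratic in the $s_{ijk}$; the key classical fact is that a suitable $2\times 2$ minor construction on the pencil of matrices $M^1 x + N^1 y + P^1 z$ yields, after restricting to the line where one of the variables vanishes, a $(2,2)$-form $\Phi(S)$ in four variables whose $3\times 3$ matrix of coefficients (as in~\eqref{mat:22}) has entries that are explicit quadratic polynomials in the $s_{ijk}$. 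One checks, using~\eqref{333->3} and~\eqref{22->2}, that $\Phi$ intertwines the $\GL_3^3$-action with an action factoring through $\GGG = R^\times \times \GL_2 \times \GL_2$ via appropriate subgroups or induced maps, and that $c_4(\Phi(S)) = c_4(S)$, $c_6(\Phi(S)) = c_6(S)$ up to explicit scalars (matching the normalisations $c_4(xyz)=1$, $c_6(xyz)=-1$).

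With such a covariant $\Phi$ in hand, I would simply set $\xi := \xi(\Phi(S))$, $\eta := \eta(\Phi(S))$, and $a_i := a_i(\Phi(S))$, where the right-hand sides are the polynomials supplied by Lemma~\ref{lem:inv22}. Then part~(i) is immediate because $b_2,b_4,b_6$ are built from the same $a_i$ and the relations $c_4 = b_2^2 - 24b_4$, $c_6 = -b_2^3 + 36 b_2 b_4 - 216 b_6$ are identities in the $a_i$ inherited verbatim from the $(2,2)$-form case; similarly part~(iii) is the Weierstrass relation, which again holds as a polynomial identity in $\xi,\eta,a_1,\ldots,a_6$. For part~(ii) one needs that $u = 12\xi + a_1^2 + 4a_2$ and $v = 2\eta + a_1\xi + a_3$, which are invariants of weights $2$ and $3$ for the $(2,2)$-form $\Phi(S)$, pull back to invariants of weights $2$ and $3$ for $S$; this follows from the equivariance of $\Phi$ together with the fact (to be recorded) that the character $\lambda\det A \det B$ on $\GGG_{(2,2)}$ pulls back under $\Phi$ to $\det A_1 \det A_2 \det A_3$ on $\GL_3^3$. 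The syzygy $(108v)^2 = (3u)^3 - 27 c_4(3u) - 54 c_6$ is then once more a polynomial identity transported from Lemma~\ref{lem:inv22}, using the matching of $c_4$ and $c_6$ established above. As in~\cite[Section 6.1.2]{BH}, one may identify $u,v$ with the covariants $\delta_2,\delta_3$ there to double-check the weights and the normalisation.

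Alternatively, and perhaps more robustly across all characteristics, one can avoid constructing $\Phi$ explicitly and instead argue as follows: the pair $(c_4,c_6)$ determines the Jacobian $E$ of $\CC_S$ up to quadratic twist, and~\cite[Section 3.2]{BH} produces a marked point $P$ on $E$ whose coordinates are given by invariants $\delta_2,\delta_3$ defined over $\Z[\tfrac16][s_{ijk}]$; the content of the lemma is to clear the denominators $2,3$. One writes down candidate integral polynomials $\xi,\eta,a_1,\ldots,a_6$ — for instance by analogy with the $(2,2)$ formulae and with the explicit description of a slicing — and then verifies the three stated identities as identities in $\Z[s_{ijk}]$ by a direct (if lengthy) computation, for example on a computer algebra system, checking that no prime divides the would-be integral model. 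The main obstacle, in either approach, is precisely this integrality/characteristic-independence: the formulae in~\cite{BH} are only given up to the primes $2$ and $3$, so the real work is producing honest elements of $\Z[s_{ijk}]$ — in particular guessing the correct $a_1,a_2,a_3$ (the analogues of $-a_{22}$ and the degree-three expressions in Lemma~\ref{lem:inv22}) so that solving for $a_4,a_6$ from $c_4,c_6$ again lands in $\Z[s_{ijk}]$ rather than $\Z[\tfrac16][s_{ijk}]$. Once the right $a_1,a_2,a_3$ are identified, parts~(i)--(iii) reduce to verifying explicit polynomial identities, exactly as in the proof of Lemma~\ref{lem:inv22}.
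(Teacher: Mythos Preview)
Your first approach --- pulling back the formulae of Lemma~\ref{lem:inv22} along a covariant $\Phi$ from Rubik's cubes to $(2,2)$-forms --- has a genuine gap. You never actually construct $\Phi$, and the equivariance you need is problematic: there is no homomorphism $\GL_3^3 \to R^\times \times \GL_2 \times \GL_2$ under which the character $\lambda \det A \det B$ pulls back to $\det A_1 \det A_2 \det A_3$, so it is unclear what ``intertwines the action via appropriate subgroups or induced maps'' is supposed to mean. Moreover a degree count shows your guess is off: an invariant of weight $2$ for the cube has degree $6$ in the $s_{ijk}$, so the entries of $\Phi(S)$ would have to be cubic, not quadratic as you state. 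Without a concrete $\Phi$ and a verified equivariance statement, nothing in parts (i)--(iii) follows.

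Your second approach is the right one and is essentially what the paper does, though the paper is more explicit and uses one extra idea you do not mention. The paper defines matrices $A,B,C$ by expanding $(\adj(\lambda N^1 + \mu P^1))M^1 = \lambda^2 A + \lambda\mu B + \mu^2 C$, and then sets $\xi = -\tr(AC)$, $\eta = -\tr(CBA)$, $a_1 = \tr(B)$, $a_2 = \tr(AC)+\tr(A)\tr(C)-\tr(\adj B)$, $a_3 = \tr(ABC)+\tr(CBA)+\tr(AC)\tr(B)$. The key trick for integrality of $a_4,a_6$ is \emph{not} to solve for them from $c_4,c_6$ directly (which would leave you checking divisibility by $2$ and $3$), but to compare with the $a$-invariants $a'_1,\ldots,a'_6$ of the ternary cubic $F_1$, already known to lie in $\Z[s_{ijk}]$ by~\cite[Lemma~2.9]{CFS}: one checks by computer that $a'_1 = a_1 + 2s$, $a'_2 = a_2 - sa_1 + 3r - s^2$, $a'_3 = a_3 + ra_1 + 2t$ for some $r,s,t \in \Z[s_{ijk}]$, and then the standard Weierstrass transformation formulae force $a_4,a_6 \in \Z[s_{ijk}]$. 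This sidesteps the integrality check you flag as ``the real work''. Parts (ii) and (iii) are then handled exactly as you suggest, with $v = \tr(ABC)-\tr(CBA)$ identified with the invariant $c_9$ of~\cite[Section~5.1.3]{BH}.
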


\begin{proof}
  We define matrices $A, B, C$ by the rule
  \[ (\adj(\lambda N^1 + \mu P^1)) M^1 = \lambda^2 A + \lambda \mu B +
  \mu^2 C. \]
  We put $\xi = -\tr(A C)$ and $\eta = - \tr(C B A)$. \\
  (i) We put
  \begin{align*}
    a_1 &= \tr(B), \\
    a_2 &= \tr(A C) + \tr(A) \tr(C) - \tr(\adj(B)), \\
    a_3 &= \tr(A B C) + \tr(C B A) + \tr(A C) \tr(B).
  \end{align*}
  Since we already defined $c_4$ and $c_6$, we could now in principle
  solve for $a_4$ and $a_6$.  However it is simpler to argue as
  follows. Let $a'_1, \ldots, a'_6$ be the $a$-invariants (as defined
  in \cite[Lemma 2.9]{CFS}) of the ternary cubic $F_1$. We checked by
  computer algebra that there exist $r,s,t \in \Z[s_{ijk}]$ satisfying
\begin{align*}
  a'_1 &= a_1 + 2s, \\
  a'_2 &= a_2 - s a_1 + 3r - s^2, \\
  a'_3 &= a_3 + r a_1 + 2t.
\end{align*}
It follows by the transformation formulae for Weierstrass equations
(see \cite{Sil}) 
that $a_4,a_6 \in \Z[s_{ijk}]$.  Note that our reason
for working with $a_1, \ldots ,a_6$, in preference to $a'_1, \ldots
,a'_6$, is that this helped us find 
particularly simple expressions for $\xi$ and $\eta$. \\
(ii) The invariants $u$ and $v$ were denoted $4 c_6$ and $c_9$ in
\cite[Section 5.1.3]{BH}. In fact we have
$v = \tr(A B C) - \tr(C B A)$. \\
(iii) This follows from (i) and (ii) exactly as in \cite[Chapter
III]{Sil}.
\end{proof}

A Rubik's cube $S$ is {\em integral} if it has coefficients in $\OK$,
and {\em non-singular} if $\Delta(S) \not= 0$.
\begin{lemma}
\label{lem:lev333}
Let $S$ be a non-singular integral Rubik's cube. Let $(E,P)$ be the
pair specified in Lemma~\ref{lem:inv333}. Then
\[ v(\Delta(S)) = v(\Delta_E) + 12 \kappa(P) + 12 \ell(S) \] where
$\ell(S) \ge 0$ is an integer we call the {\em level}.
\end{lemma}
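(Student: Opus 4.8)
The plan is to mirror the proof of Lemma~\ref{lem:lev22} almost verbatim. The formulae in Lemma~\ref{lem:inv333} produce an integral Weierstrass equation $W$ for $E$, namely
\[
W: \quad y^2 + a_1 xy + a_3 y = x^3 + a_2 x^2 + a_4 x + a_6,
\]
together with an integral point $P = (\xi,\eta) \in E(K)$ by part (iii). The first key step is to record that the discriminant of this particular Weierstrass equation equals $\Delta(S)$; this is because $c_4, c_6$ are invariants of $S$, the usual identity $c_4^3 - c_6^2 = 1728\Delta$ holds on both sides, and $\Delta(W)$ is by definition $(c_4(W)^3 - c_6(W)^2)/1728 = (c_4(S)^3 - c_6(S)^2)/1728 = \Delta(S)$ — using that $c_4, c_6$ of the Weierstrass equation are computed from $b_2, b_4, b_6$ exactly as in part (i). Since $S$ is non-singular, $\Delta(S) \ne 0$, so $W$ is genuinely an elliptic curve, equal to $E$.

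The second key step is the standard fact, which one can cite from \cite[Chapter~VII]{Sil} (or the discussion of $\kappa(P)$ just preceding Lemma~\ref{lem:lev22}), that among all integral Weierstrass equations for $E$ carrying $P$ as an integral point, the minimal possible valuation of the discriminant is exactly $v(\Delta_E) + 12\kappa(P)$. The point is that passing between two such equations is effected by a substitution $x \mapsto u^2 x + r$, $y \mapsto u^3 y + su^2 x + t$ with $u \in K^\times$, which scales $\Delta$ by $u^{12}$; integrality of the equation together with integrality of the transformed point forces $v(u) \ge -\kappa(P)$ when starting from a minimal model, and $v(u) = -\kappa(P)$ is achieved by the model on which $P$ has the coordinates with $v(x_P) = -2\kappa(P)$, $v(y_P) = -3\kappa(P)$ (or $\kappa(P) = 0$). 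This is exactly the content invoked in the proof of Lemma~\ref{lem:lev22}, so it suffices to quote that argument.

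Combining the two steps: the equation $W$ obtained from $S$ has $v(\Delta(W)) = v(\Delta(S))$ and is an integral model of $E$ on which $P$ is integral, hence $v(\Delta(S)) \ge v(\Delta_E) + 12\kappa(P)$, and the non-negative integer
\[
\ell(S) := \tfrac{1}{12}\bigl(v(\Delta(S)) - v(\Delta_E) - 12\kappa(P)\bigr)
\]
is a well-defined integer (divisibility by $12$ follows since any two such valuations differ by a multiple of $12$, and $v(\Delta_E) + 12\kappa(P)$ is itself realised by an integral model). I do not anticipate a serious obstacle here: the only point requiring care is that the $a_4, a_6$ produced in Lemma~\ref{lem:inv333} are polynomials in the $s_{ijk}$ with integer coefficients, so that $W$ really is integral when $S$ is — but that was already established in the proof of Lemma~\ref{lem:inv333}. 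Everything else is a direct transcription of the $(2,2)$-form case.
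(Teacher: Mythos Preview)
Your proposal is correct and follows exactly the paper's approach: the paper's proof is simply ``The proof is identical to that of Lemma~\ref{lem:lev22}'', and you have spelled out precisely that argument with the appropriate substitutions. The extra justification you give (that $\Delta(W)=\Delta(S)$ via $c_4,c_6$, and that divisibility by $12$ holds) is implicit in the paper's treatment and is correctly handled.
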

\begin{proof}
  The proof is identical to that of Lemma~\ref{lem:lev22}.
\end{proof}

In this section we give an algorithm for minimising Rubik's cubes. In
Section~\ref{sec:minthm} we show that if $\CC_S(K) \not= \emptyset$
then the minimal level is zero.

We say that an integral cube $S$ is {\em saturated} if for each
$i=1,2,3$ the matrices $M^i,N^i,P^i \in \Mat_3(\OK)$ are linearly
independent mod $\pi$.  If an integral cube is not saturated, then it
is obvious how we may decrease the level.

Our algorithm for minimising $3 \times 3 \times 3$ cubes is described
by the following theorem.

\begin{thm}
\label{thm:min333}
Let $S$ be a non-minimal saturated Rubik's cube. Let $F_1, F_2, F_3$
be the associated ternary cubics, and $f_1,f_2,f_3$ their reductions
mod $\pi$.  Then we are in one of the following two situations.
\begin{enumerate}
\item Two or more of the $f_i$ are non-zero and have a repeated linear factor, say
  $f_1$ and $f_2$ are divisible by $z^2$. We apply a transformation
  \[ \left[\begin{pmatrix} 1 & & \\
      & 1 & \\ & & \pi \end{pmatrix},
    \begin{pmatrix} 1 & & \\ & 1 & \\ & & \pi
    \end{pmatrix}, A_3 \right] \] where $A_3 \in \GL_3(K)$ is chosen
  such that $M^3,N^3,P^3 \in \Mat_3(\OK)$ are linearly independent mod
  $\pi$.
\item Two or more of the $f_i$ define a curve with a unique singular
  point, say $f_1$ and $f_2$ define curves with singular points at
  $(1:0:0)$. We apply a transformation
  \[ \left[ \begin{pmatrix} 1 & & \\ & \pi & \\ & & \pi
\end{pmatrix},  
\begin{pmatrix} 1 & & \\ & \pi & \\ & & \pi
\end{pmatrix}, A_3 \right] \] where $A_3 \in \GL_3(K)$ is chosen such
that $M^3,N^3,P^3 \in \Mat_3(\OK)$ are linearly independent mod $\pi$.
\end{enumerate}
The procedures in (i) and (ii) give an integral cube of the same or
smaller level. Repeating these procedures either gives a non-saturated
cube or decreases the level after at most three iterations.
\end{thm}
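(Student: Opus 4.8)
The plan is to run the argument of Theorem~\ref{thm:min22}, now using the relation~\eqref{333->3} between a Rubik's cube and its three determinantal ternary cubics. I would open with an observation that severely limits the reductions $f_i$. Since $S$ is non-minimal, Lemma~\ref{lem:lev333} gives $\ell(S)\ge 1$. As $\Delta(F_i)=\Delta(S)$ and $\CC_{F_i}\isom\CC_S$ (so the Jacobians agree), Lemma~\ref{lem:lev333} also shows $v(\Delta(F_i))\ge v(\Delta_E)+12$, which exceeds the discriminant valuation of a minimal ternary cubic model of $E$; hence each $F_i$ is non-minimal as a ternary cubic. By the classification of non-minimal ternary cubics in~\cite{CFS}, each $f_i$ is therefore either $0$ (when $v(F_i)\ge 1$), or has a repeated linear factor, or defines a curve with a unique singular point. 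So the content of the theorem is not that the $f_i$ degenerate — that is automatic — but that \emph{two} of them degenerate in the same way and can be aligned.

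The second step is to reduce to diagonal transformations, exactly as for $(2,2)$-forms. A level-decreasing $[A_1,A_2,A_3]\in\GGG(K)$ may, after clearing denominators, putting each $A_i$ in Smith normal form, and replacing $S$ by an $\OK$-equivalent cube (justified, as in the proof of Theorem~\ref{thm:min22}, by the cube analogue of~\cite[Lemma~4.1]{CFS}), be assumed diagonal, say $A_i=\Diag(\pi^{a_{i1}},\pi^{a_{i2}},\pi^{a_{i3}})$. Here I would prove the analogue of Lemma~\ref{lem:wts22}: sorting the exponents in each direction and using $v(S)=0$, saturation of $S$, and integrality of the transformed cube together with the bound $\ell\ge 0$, only a short explicit list of profiles $(a_{ij})$ can occur. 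Translating each such profile into divisibility conditions on the coefficients $s_{ijk}$, arranged as the three slices $M^i,N^i,P^i$, and reading off the induced $f_i$, one finds that two of the three directions are necessarily ``used'' and that the corresponding two cubics degenerate compatibly: either $z^2\mid f_1$ and $z^2\mid f_2$ (case~(i)) or $\CC_{f_1}$ and $\CC_{f_2}$ are both singular at $(1:0:0)$ (case~(ii)), after an $\OK$-equivalence. I expect the mechanism that forces this alignment to be a further study of the low-weight invariants, namely the vanishing modulo $\pi$ of the weight-$2$ and weight-$3$ invariants $u,v$ of Lemma~\ref{lem:inv333}, used much as in the proof of Lemma~\ref{lemB}(i).

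It then remains to verify that the prescribed transformations behave as stated. Applying $\Diag(1,1,\pi)$ (resp.\ $\Diag(1,\pi,\pi)$) in directions $1$ and $2$ multiplies $\Delta(S)$ by $\pi^{24}$; the condition $z^2\mid f_1,f_2$ (resp.\ the common singular point) guarantees that the result is still integral, and — just as the ternary-cubic reduction of~\cite{CFS} applied in directions $1$ and $2$ — that the direction-$3$ slices become linearly dependent, in fact divisible by $\pi$, mod $\pi$. Choosing $A_3\in\GL_3(K)$ to restore linear independence mod $\pi$ multiplies $\Delta$ by $\pi^{12v(\det A_3)}$ with $v(\det A_3)\le -2$, so the level is unchanged or decreases, strictly when $v(\det A_3)\le -3$. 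For the termination claim one tracks how the pair of degenerate $f_i$'s and the direction-$3$ slices evolve: in the absence of a level drop each step strictly improves a ternary-cubic ``slope'', which is bounded (as for binary quartics and ternary cubics in~\cite{CFS}), so after at most three iterations one either decreases the level or produces a non-saturated cube; one also checks, again via~\cite[Lemma~4.1]{CFS}, that the output is independent of the $\OK$-equivalences chosen.

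The main obstacle is the heart of the second step: passing from ``each $f_i$ is singular'' to ``two of them degenerate in the same way, alignably'', and verifying that the re-saturating $A_3$ never requires $v(\det A_3)>-2$. This is the genuinely new feature beyond the $(2,2)$-form case, where there are only two associated quartics; carrying it out amounts to enumerating the standard diagonal profiles from the analogue of Lemma~\ref{lem:wts22} and analysing each in turn, which is where the real computation lies. A secondary, routine point is checking (once more through~\cite[Lemma~4.1]{CFS}) that the transformations in (i) and (ii), followed by the choice of $A_3$, descend to well-defined maps on $\OK$-equivalence classes, so that the reductions above are legitimate.
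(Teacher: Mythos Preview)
Your overall architecture matches the paper's: reduce to diagonal transformations via Smith normal form, classify the admissible exponent profiles, and analyse each profile in turn. But two of your expectations about the mechanism are off, and one step is substantially harder than you suggest.

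First, the analogue of Lemma~\ref{lem:wts22} is not as light as in the $(2,2)$-form case. The partial order on weights $(A,s)$ has infinitely many elements and it is not obvious a priori that there are finitely many minimal ones. The paper first proves by hand a bound $s\le 10$ (Lemma~\ref{lem:getbound}), arguing by contradiction with several carefully chosen weight reductions, and then uses a computer search to find exactly $81$ minimal weights, which collapse to $6$ profiles $\tau_1,\ldots,\tau_6$ after imposing the ordering conditions and permuting slicings (Lemma~\ref{findtypes}). Your sketch ``sorting the exponents and using $v(S)=0$ and saturation'' does not get you there; the genuine content is this boundedness lemma plus enumeration.

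Second, the alignment of two of the $f_i$ is \emph{not} obtained from the vanishing of the invariants $u,v$ of Lemma~\ref{lem:inv333}. In each of the six cases the paper writes down the valuation pattern on the $27$ entries, performs a few $\OK$-equivalences (row/column operations and replacing $A,B,C$ by linear combinations) to normalise the pattern further, and then computes $f_1,f_2,f_3$ explicitly from these entries. The compatibility (a common $z^2$ factor, or a common singular point at $(1{:}0{:}0)$) falls out of this direct computation, together with saturation to rule out degenerate subcases. Your proposed invariant-theoretic shortcut does not appear, and I do not see how to make it work: $u,v$ are degree $6$ and $9$ in the entries and do not by themselves pin down the shape of two of the $f_i$ simultaneously.

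Third, the termination bound ``at most three iterations'' is not proved by a slope invariant. The paper shows, case by case, that applying the procedure in~(i) or~(ii) lands you in a strictly earlier case (e.g.\ Case~6 $\to$ Case~3, Case~5 $\to$ Case~2 or Case~4 with $\delta=0$, Case~4 $\to$ Cases~1--3, Case~3 $\to$ Case~2), and Cases~1 and~2 already terminate. The ``$\delta=0$'' observation in Case~5 is exactly what is needed to make the iteration count come out to three. Likewise the inequality $v(\det A_3)\le -2$ is never argued abstractly; in each case the paper exhibits the explicit column (or slice) operations that re-saturate and reads off the effect on the level.

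So your plan is the right shape, but to carry it out you should drop the invariant heuristic, prove the boundedness-of-weights lemma, and then do the six-case computation directly.
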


\begin{Remark}
  Let $S$ be an integral Rubik's cube, with associated ternary cubics
  $F_1, F_2, F_3$. It is clear by~\eqref{333->3} that if any of the
  $F_i$ are minimal then $S$ is minimal.  However the converse is not
  true. For example if $S \equiv (\varepsilon_{ijk}) \pmod{\pi}$,
  where $\varepsilon_{ijk}$ is the Levi-Civita symbol (as appears in
  the definition of the cross product), then $S$ is minimal by
  Theorem~\ref{thm:min333}, yet we have $F_1 \equiv F_2 \equiv F_3
  \equiv 0 \pmod{\pi}$.
\end{Remark}

Exactly as in the case of $(2,2)$-forms, any non-minimal Rubik's cube $S$ is
$\OK$-equivalent to a cube whose level can be reduced using diagonal
transformations. Indeed, suppose that $[\pi^{-s} A_1,A_2,A_3] \in
\GGG(K)$ is a transformation reducing the level.  By clearing
denominators, we may assume that the $A_i$ have entries in $\OK$, not
all in $\pi \OK$.  Then writing these matrices in Smith normal form we
have $A_i=Q_iD_iP_i$ where $P_i,Q_i \in \GL_3(\OK)$ and
\[ D_i=\left( \begin{matrix} \pi^{a_{1i}} & 0 & 0 \\ 0 & \pi^{a_{2i}}
    & 0 \\ 0 & 0 & \pi^{a_{3i}} \end{matrix}\right) \] with
$\min(a_{1i},a_{2i},a_{3i})=0$. If this transformation reduces the
level then $\sum a_{ij} < 3s$. In fact, by increasing one of the
$a_{ij}$, we may assume $\sum a_{ij} = 3s-1$.  We will from now on
assume $a_{11} = a_{12} = a_{13} =0$.  If the new cube has
coefficients in $\OK$ then we say that the tuple
$(a_{21},a_{31};a_{22},a_{32};a_{23},a_{33})$ is {\em admissible} for
$S$.

\begin{lemma}
\label{findtypes}
Let $S$ be a non-minimal Rubik's cube. Then after permuting the three slicings, and 
replacing $S$ by an $\OK$-equivalent cube, 
at least one of the following tuples is admissible.
\begin{align*}
  \tau_1 &= ( 1, 1; 0, 0; 0, 0 ), & 
  \tau_2 &= ( 0, 1; 0, 1; 0, 0 ), &
  \tau_3 &= ( 1, 2; 0, 1; 0, 1 ), \\
\tau_4 &= ( 1, 1; 1, 1; 0, 1 ), &
 \tau_5 &= ( 1, 2; 1, 2; 1, 1 ), &
 \tau_6 &= ( 2, 3; 1, 2; 1, 2 ).
\end{align*}
\end{lemma}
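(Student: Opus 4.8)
The plan is to argue as in Lemma~\ref{lem:wts22}: translate ``admissible'' into a system of valuation inequalities on the coefficients $s_{ijk}$, and then reduce the exponent tuple using a combination of combinatorial moves and the constraint coming from the invariants.

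First I would make the translation explicit. Absorbing the unit factors of the Smith normal forms into an $\OK$-equivalence, the transformation attached to a tuple $(a_{21},a_{31};a_{22},a_{32};a_{23},a_{33})$ with $\sum_{\rho,\ell}a_{\rho\ell}=3s-1$ is $[\pi^{-s}D_1,D_2,D_3]$ with $D_i=\Diag(1,\pi^{a_{2i}},\pi^{a_{3i}})$; it multiplies the coefficient of $x_iy_jz_k$ by $\pi^{\,a_{i1}+a_{j2}+a_{k3}-s}$, where $a_{1\ell}=0$. So the tuple is admissible for $S$ precisely when
\[
  v(s_{ijk})\;\ge\;s-a_{i1}-a_{j2}-a_{k3}\qquad\text{for all }1\le i,j,k\le 3.
\]
(As a check, the right-hand sides sum to $27s-9(3s-1)=9$, and the determinant weights give $v(\Delta(S))\mapsto v(\Delta(S))-12$, so the level drops by one.) Since permuting basis vectors within a slicing and permuting the slicings are $\OK$-equivalences, I may normalise to $0\le a_{2i}\le a_{3i}$ and list the slicings by non-increasing colength $c_i:=a_{2i}+a_{3i}$; this holds for each of $\tau_1,\dots,\tau_6$.

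I would then show that any admissible tuple not on the list is, after a further $\OK$-equivalence, ``simpler'' than a listed tuple, so that a listed tuple is also admissible. Three mechanisms enter. If some $3\times 3$ face of $S$ is forced to vanish mod~$\pi$ --- which happens, for instance, as soon as the colengths of the two other slicings sum to less than $s$ --- then $S$ is non-saturated and, after a permutation, $\tau_1$ is admissible. Otherwise the inequalities above often already imply those of one of $\tau_1,\dots,\tau_6$ for a possibly smaller $s$; this is the analogue of the step ``if $a>b>0$ then $(2,1)$ is admissible'' in Lemma~\ref{lem:wts22}. Finally, since $S$ is non-minimal one checks that the invariants $u$ and $v$ of Lemma~\ref{lem:inv333} satisfy $u(S)\equiv v(S)\equiv 0\pmod\pi$ (the point $(\xi,\eta)$ reduces onto the singular point of the Weierstrass model), and, as in the proof of Lemma~\ref{lemB}(i), these congruences force extra vanishing of the $s_{ijk}$ that handles the borderline cases. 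Each move either lands on a listed tuple or strictly decreases $\sum_{\rho,\ell}a_{\rho\ell}$, so the process terminates.

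The crux is to verify that this really is exhaustive --- that $\tau_1,\dots,\tau_6$ is the complete list of tuples admitting no move. This requires an a priori bound confining a ``terminal'' tuple to finitely many candidates: here one uses that $S$ may be assumed saturated, so each of its nine faces contains a unit coefficient, yielding inequalities $a_{i1}+a_{j2}+a_{k3}\ge s$ that, together with $u(S)\equiv v(S)\equiv 0$ and the minimality of the tuple, restrict $s$; and then a direct --- and, like the other explicit computations in the paper, likely computer-assisted --- check of the surviving candidates. I expect the main obstacle to be the bounding step, and within the finite check, pinning down the correct move for the borderline tuples at the same level as $\tau_3$, $\tau_5$ and $\tau_6$.
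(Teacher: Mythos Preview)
Your broad outline --- translate admissibility into the valuation inequalities $v(s_{ijk})\ge s-a_{i1}-a_{j2}-a_{k3}$, normalise by ordering within and among the slicings, bound, and enumerate --- matches the paper. But the paper's execution is cleaner and avoids the arithmetic detours you propose.

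The key device you are missing is a partial order on the set of weight tuples themselves. The paper sets
\[
  \W=\Big\{(A,s)\in\Mat_3(\Z)\times\Z:\ a_{1j}=0,\ a_{ij}\ge0,\ \textstyle\sum a_{ij}=3s-1\Big\}
\]
and declares $(A,s)\le(A',s')$ when $\max(s-a_{i1}-a_{j2}-a_{k3},0)\le\max(s'-a'_{i1}-a'_{j2}-a'_{k3},0)$ for all $i,j,k$. Admissibility for a fixed $S$ is then downward-closed in this order, so the lemma reduces to listing the \emph{minimal} elements of $(\W,\le)$ --- a purely combinatorial problem, independent of $S$. The bound confining the search (Lemma~\ref{lem:getbound}, giving $s\le10$) is obtained by elementary inequality juggling: one shows that for a minimal $(A,s)$, various modified tuples $(A',s-1)$ or $(A',s-2)$ in $\W$ cannot lie below $(A,s)$, and the resulting inequalities combine to cap $s$. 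A computer search within $s\le10$ then finds $81$ minimal elements, which collapse under your normalisation to $\tau_1,\dots,\tau_6$ (with $\tau_4$ appearing three times).

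By contrast, your third mechanism (forcing extra vanishing via $u\equiv v\equiv0$) and your bounding strategy via saturation are not needed, and in fact sit awkwardly here: the lemma does not assume $S$ saturated, and the reduction to the list is meant to be uniform in $S$. Saturation and the shape of the reductions $f_i$ enter only later, in the case-by-case proof of Theorem~\ref{thm:min333}. So while your plan could perhaps be pushed through, it entangles the combinatorics of weight tuples with the arithmetic of the particular cube; the paper's partial-order formulation decouples these and makes both the bound and the enumeration tractable.
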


\begin{proof}
  We define the set of {\em weights}
  \[ \W = \left\{ (A,s) \in \Mat_3(\Z) \times \Z \,\, \left|
      \begin{array}{c} a_{11}= a_{12} = a_{13} = 0, \\
        a_{ij} \ge 0 \text{ for all } i,j, \\ \sum a_{ij} = 3s
        -1 \end{array} \right. \right\}. \] If $(A,s) \in \W$ then
  $(a_{21},a_{31};a_{22},a_{32};a_{23},a_{33})$ is admissible for $S$
  if and only if
  \[ v(s_{ijk}) \ge \max(s-a_{i1} - a_{j2}-a_{k3},0) \] for all $i,j,k
  \in \{1,2,3\}$. We define a partial order on $\W$ by $(A,s) \le
  (A',s')$ if
\[
\max(s-a_{i1} - a_{j2}-a_{k3},0) \le \max(s'-a'_{i1} -
a'_{j2}-a'_{k3},0)
\]
for all $i,j,k \in \{1,2,3\}$. A computer calculation, using
Lemma~\ref{lem:getbound} below, shows that $(\W,\le)$ has exactly $81$
minimal elements.  By an $\OK$-equivalence we may assume $a_{2i} \le
a_{3i}$ for $i=1,2,3$, and by permuting the three slicings of $S$ we
may assume $a_{31} \geq a_{32} \geq a_{33}$.  Only $8$ of the $81$
minimal elements satisfy these additional conditions. These are the
$6$ elements listed in the statement of the lemma, together with two
more that are the same as $\tau_4$ up to permuting the slicings.
\end{proof}

\begin{lemma} 
\label{lem:getbound}
If $(A,s) \in \W$ is minimal then $s \le 10$.
\end{lemma}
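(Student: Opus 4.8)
The plan is to show that if $s$ is large then $(A,s) \in \W$ cannot be minimal, by exhibiting a strictly smaller element. First I would recall what minimality means here: the "value" attached to $(A,s)$ is the $27$-tuple of non-negative integers $m_{ijk} = \max(s - a_{i1} - a_{j2} - a_{k3}, 0)$, and $(A,s) \le (A',s')$ compares these tuples entrywise. So to defeat minimality of a given $(A,s)$ it suffices to produce $(A',s') \in \W$ with $m'_{ijk} \le m_{ijk}$ for all $i,j,k$ and strict inequality somewhere. A convenient way to build candidates is to move "mass": decreasing $s$ by $1$ lowers every positive $m_{ijk}$, but this is offset because $\sum a_{ij}$ must then equal $3s - 4$, so we are forced to remove $3$ units from the entries $a_{ij}$ (keeping $a_{11}=a_{12}=a_{13}=0$ and all entries non-negative). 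Decreasing $a_{i1}$ by $1$ raises $m_{ijk}$ by at most $1$ for the $9$ triples with first index $i$. So the natural comparison is: starting from $(A,s)$, pass to $(A', s-1)$ where $A'$ is obtained by subtracting $1$ from three suitably chosen positive entries among $a_{21}, a_{31}, a_{22}, a_{32}, a_{23}, a_{33}$. The key point is that for this to actually decrease the value, the three entries must be chosen so that the columns (or rows) they lie in are "not too depleted" — i.e. after the decrease we still have $m'_{ijk} \le m_{ijk}$ everywhere.

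The heart of the argument is a counting/pigeonhole step. Since $a_{11}=a_{12}=a_{13}=0$, the six free entries $a_{21},a_{31};a_{22},a_{32};a_{23},a_{33}$ sum to $3s-1$. If $s$ is large, at least one entry in each of the three columns is large: indeed if some column $j$ had both of its nonzero entries $a_{2j}, a_{3j}$ equal to $0$, then for $i \in \{2,3\}$ and this $j$, the triple $(i,j,k)$ would contribute $m_{ijk} = \max(s - a_{i1} - a_{k3},0)$, and one checks this already forces enough of the $m_{ijk}$ to be positive and large that one can lower $s$; more simply, one shows $s$ large implies every column has total at least $1$, in fact we can quantify: $a_{21}+a_{31} \ge s$ or similar must fail for a minimal element once $s > 10$. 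I would make this precise by case analysis on how the "mass" $3s-1$ is distributed: roughly, a minimal element must keep each $m_{ijk}$ from being wastefully large, and since there are only $27$ triples but the $m_{ijk}$ are determined by just $6$ numbers summing to $3s-1$, for $s$ large the redundancy forces some triple $m_{ijk} = 0$ with "slack", which is exactly what lets us shrink $s$. The explicit bound $s \le 10$ presumably comes from optimizing this: the worst minimal configurations are the $\tau_i$ (the largest being $\tau_6$ with $s$ determined by $\sum = 3s-1$, giving $2+3+1+2+1+2 = 11 = 3s-1$, so $s = 4$) together with the other $81 - 8$ minimal elements obtained by relaxing the ordering conditions; the bound $10$ is a safe over-estimate covering all $81$.

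The main obstacle I anticipate is the bookkeeping in the case analysis: showing that whenever $s$ is large there is a \emph{valid} choice of three entries to decrement — valid meaning the resulting $A'$ still has all entries non-negative and the value genuinely does not increase at any of the $27$ triples. The subtlety is that decrementing $a_{i1}$ affects exactly the $9$ triples $(i, *, *)$, and one must ensure that for those triples the old $m_{ijk}$ was already $\ge 1$ (so the $+1$ from decrementing is absorbed by the $-1$ from lowering $s$) — this can fail if some $a_{jk}$ with $k\ge 2$ in that column is itself large, making $m_{ijk}=0$ already. So the choice of which three entries to decrement must be coordinated across all three columns simultaneously. I would organize this by first arguing (via the weight constraint $\sum a_{ij} = 3s-1$ with $s$ large) that at least one entry per column exceeds some threshold, pick those to decrement, and then verify the $27$ inequalities hold — this is the routine but lengthy part best left to the computer calculation the authors already invoke, with the lemma's role being just to certify finiteness so that enumeration of $\W$'s minimal elements terminates. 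In the write-up I would give the conceptual reduction carefully and delegate the final inequality check, matching the paper's style of "a computer calculation shows."
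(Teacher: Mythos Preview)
Your overall instinct --- pass from $(A,s)$ to some $(A',s')$ with $s' < s$ by removing total weight $3(s-s')$ from the entries, and argue this cannot always fail when $s$ is large --- is exactly the paper's strategy. But your write-up has a genuine gap in the logical structure, and a secondary gap in the technique.

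\textbf{Circularity.} You conclude by saying you would ``delegate the final inequality check'' to a computer, ``matching the paper's style of `a computer calculation shows'.'' This misreads the architecture. The computer calculation is in Lemma~\ref{findtypes}, and it \emph{uses} Lemma~\ref{lem:getbound} to make the search space finite. The present lemma therefore cannot itself be delegated to that computer search; it must be proved by hand. Relatedly, your remark that ``the bound $10$ is a safe over-estimate covering all $81$'' is backwards: the $81$ minimal elements are only discovered \emph{after} the bound $s \le 10$ is in hand.

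\textbf{Choice of moves.} You restrict yourself to moves of the form ``subtract $1$ from three suitably chosen positive entries, decrease $s$ by $1$.'' The paper's proof is more flexible: its very first and most important move subtracts $3$ from the single largest entry $a_{31}$, yielding the inequality $s+1 \ge a_{31}$. It then combines this with several further moves (including one that subtracts $1$ from all six free entries and drops $s$ by $2$) to produce a short list of linear inequalities which, summed, force $s \le 10$. Your pigeonhole sketch (``at least one entry per column exceeds some threshold'') is heading in the right direction, but as written it does not produce any concrete inequality, and it is not clear that your restricted family of moves would close the argument without the ``subtract $3$ from one entry'' step. The paper's proof is short and entirely elementary; you should try to carry it through rather than gesture at a case analysis.
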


\begin{proof}
  Suppose for a contradiction that $(A,s)$ is minimal with $s >
  10$. Without loss of generality we have
  \begin{equation}
  \label{eqn:1}
  a_{21} \le a_{31}, \,\,\, a_{22} \le a_{32}, \,\,\, a_{23} \le a_{33} 
  \,\, \text{ and } \,\, a_{31} \geq a_{32} \geq a_{33}. 
\end{equation}
Since $6 a_{31} \ge \sum a_{ij} = 3s - 1$ we certainly have $a_{31} >
3$.  Let $A'$ be the matrix obtained from $A$ by replacing $a_{31}$ by
$a_{31}-3$. Then $(A',s-1) \in \W$, and by our minimality assumption
$(A',s-1) \not\le (A,s)$. Therefore
\[
   \max(s-1-a'_{i1} - a'_{j2}-a'_{k3},0) > 
    \max(s-a_{i1} - a_{j2}-a_{k3},0)
\]
for some $i,j,k \in \{1,2,3\}$. Since we only changed the entry
$a_{31}$ we must have $i=3$ and $s-1-(a_{31}-3) > 0$.  Therefore
\begin{equation}
\label{eqn:2}
s+1 \ge a_{31}.
\end{equation}
The following inequalities are obtained in an entirely analogous way:
\begin{enumerate}
\item If $a_{33}>0$ then by considering
  $(a_{21},a_{31}-1;a_{22},a_{32}-1;a_{23},a_{33}-1)$, we have $s \ge
  a_{32}+a_{33}$.
\item If $a_{21},a_{22},a_{23} >0$ then by considering
  $(a_{21}-1,a_{31}-1,a_{22}-1,a_{32}-1,a_{23}-1,a_{33}-1)$, we have
  $s \ge a_{21}+a_{22}+a_{23}$.
\item If $a_{22}>0$ then by considering
  $(a_{21},a_{31}-1;a_{22}-1,a_{32}-1;a_{23},a_{33})$, we have $s \ge
  a_{31}+a_{22}$.
\item If $a_{21},a_{32}>0$ then by considering
  $(a_{21}-1,a_{31}-1;a_{22},a_{32}-1;a_{23},a_{33})$, we have $s \ge
  a_{21}+a_{32}$.
\item If $a_{23}>0$ then by considering
  $(a_{21},a_{31}-1;a_{22},a_{32};a_{23}-1,a_{33}-1)$, we have $s \ge
  a_{31}+a_{23}$.
\end{enumerate}

We now claim that if $a_{33} >0$ then $s \ge a_{21}+a_{22}+a_{23}$.
Indeed if $a_{21},a_{22},a_{23} >0$ then this is (ii). If $a_{21}=0$
then we instead use (i). If $a_{21} >0$ and $a_{23}=0$ then (noting that
$a_{32} \ge a_{33} > 0$) we instead use (iv).
If $a_{23}>0$ and $a_{22} =0$ then we instead use (v).

To complete the proof of the lemma, we first suppose $a_{33} >0$.
Then the inequalities in (i) and (ii) hold without further hypothesis.
We weaken the inequalities (iii), (iv) and (v) to
\begin{align}
\label{weak1}
s+1 &\ge a_{31}+a_{22} \\
\label{weak2}
s+1 &\ge a_{21}+a_{32} \\
s+1 &\ge a_{31}+a_{23}
\end{align}
so that in cases where some of the $a_{ij}$ are zero, these still hold
by~\eqref{eqn:1} and \eqref{eqn:2}. Adding together all five
inequalities gives
\[ 5 s + 3 + a_{33} \ge 2 \sum a_{ij} = 2(3s-1) \] and hence $a_{33}
\ge s-5$. Using (i) again gives
\[ s \ge a_{32}+a_{33} \ge 2 a_{33} \ge 2(s-5) \] and hence $s \le
10$, as required.

If $a_{33} =0$ then we still have \eqref{weak1} and \eqref{weak2}
giving $2(s+1) \ge \sum a_{ij} = 3 s - 1$, and hence $s \le 3$.
\end{proof}

\begin{ProofOf}{Theorem~\ref{thm:min333}}
  We represent $S$ as a triple of matrices $A,B,C$, say.
  \[
\begin{matrix}
  A_{11} &  A_{12} & A_{13}\\
  A_{21} &  A_{22} & A_{23}\\
  A_{31} &  A_{32} & A_{33}\\
\end{matrix} \hspace{2em}
\begin{matrix}
 B_{11} &  B_{12} & B_{13}\\
 B_{21} &  B_{22} & B_{23}\\
 B_{31} &  B_{32} & B_{33}\\
\end{matrix} \hspace{2em}
\begin{matrix}
 C_{11} &  C_{12} & C_{13}\\
 C_{21} &  C_{22} & C_{23}\\
 C_{31} &  C_{32} & C_{33}\\
\end{matrix} 
\]
The action of $\GGG(K) =
\GL_3(K)^3$ 
may be described as follows.  The first factor replaces $A$, $B$, $C$
by linear combinations of these matrices. The second factor acts by
row operations (applied to $A$, $B$, $C$ simultaneously), and the
third factor acts by column operations.

We may assume one of the tuples $\tau_1, \ldots,\tau_6$ in
Lemma~\ref{findtypes} is admissible for $S$.  We therefore split into
these $6$ cases.

\medskip

\paragraph{{\bf Case 1}}
We assume $(1,1;0,0;0,0)$ is admissible for $S$. Then the entries of
$A$ have valuation at least one, and so the cube $S$ is not saturated.

\medskip

\paragraph{{\bf Case 2}}
We assume $(0,1;0,1;0,0)$ is admissible for $S$. The entries of $A,B$
and $C$ have valuations satisfying
    \begin{equation*}
    \begin{matrix}
 \geq 1 &  \geq 1 & \geq 1\\
\geq 1 &  \geq 1 & \geq 1\\
\geq 0 &  \geq 0 & \geq 0
\end{matrix} \hspace{2em}
\begin{matrix}
 \geq 1 &  \geq 1 & \geq 1\\
\geq 1 &  \geq 1 & \geq 1\\
\geq 0 &  \geq 0 & \geq 0
\end{matrix} \hspace{2em}
\begin{matrix}
\geq 0 &  \geq 0 & \geq 0\\
\geq 0 &  \geq 0 & \geq 0\\
\geq 0 &  \geq 0 & \geq 0\\
\end{matrix} 
\end{equation*}
Since $S$ is saturated we may assume by column operations that
$v(C_{11})=0$, $v(C_{12}) \ge 1$ and $v(C_{13}) \ge 1$. Subtracting a
multiple of the first row from the second row gives $v(C_{21}) \ge 1$,
and again by column operations $v(C_{22})=0$ and $v(C_{23}) \ge 1$.
Subtracting multiples of the first two rows from the third, the
valuations now satisfy
   \begin{equation*}
    \begin{matrix}
      \geq 1 &  \geq 1 & \geq 1\\
      \geq 1 &  \geq 1 & \geq 1\\
      \geq 0 &  \geq 0 & \geq 0\\
      \end{matrix} \hspace{2em}
    \begin{matrix}
      \geq 1 &  \geq 1 & \geq 1\\
      \geq 1 &  \geq 1 & \geq 1\\
      \geq 0 &  \geq 0 & \geq 0\\
      \end{matrix} \hspace{2em}
    \begin{matrix}
         = 0 &  \geq 1 & \geq 1\\
      \geq 1 &  = 0    & \geq 1\\
      \geq 1 &  \geq 1 & \geq 0\\
    \end{matrix}
\end{equation*}
We compute $f_{1} = C_{11}C_{22} z^2 (A_{33} x + B_{33} y + C_{33} z)
\mod{\pi}$.  Since $S$ is saturated it follows that $f_1$ is
nonzero. The same argument shows that $f_2$ has a repeated factor and
is nonzero. On the other hand we have $f_3=0$.  The procedure in (i)
multiplies $C$ and the third row by $\pi$, and then divides the cube
by $\pi$.  This transformation decreases the level.

\medskip

\paragraph{{\bf Case 3}}
We assume $(1,2;0,1;0,1)$ is admissible for $S$. The entries of $A,B$
and $C$ have valuations satisfying
 \begin{equation*}
    \begin{matrix}
      \geq 2 &  \geq 2 & \geq 1\\
      \geq 2 &  \geq 2 & \geq 1\\
      \geq 1 & \geq 1 & \geq 0
\end{matrix} \hspace{2em}
\begin{matrix}
  \geq 1 &  \geq 1 & \geq 0\\
  \geq 1 &  \geq  1& \geq 0\\
  \geq 0 & \geq 0 & \geq 0
\end{matrix} \hspace{2em}
\begin{matrix}
  \geq 0 &  \geq 0 & \geq 0\\
  \geq 0 &  \geq 0 & \geq 0\\
  \geq 0 & \geq 0 & \geq 0
\end{matrix} 
\end{equation*}
Since $S$ is saturated we have $v(A_{33})=0$. If $B_{13} \equiv B_{23}
\equiv 0 \pmod{\pi}$ then we are in Case 2, and likewise if $B_{31}
\equiv B_{32} \equiv 0 \pmod{\pi}$. By operating on the first two rows
and columns, and then subtracting a multiple of $A$ from $B$, the
valuations now satisfy
 \begin{equation*}
    \begin{matrix}
      \geq 2 &  \geq 2 & \geq 1\\
      \geq 2 &  \geq 2 & \geq 1\\
      \geq 1 & \geq 1 & = 0
\end{matrix} \hspace{2em}
\begin{matrix}
  \geq 1 &  \geq 1 & \geq 1\\
  \geq 1 &  \geq  1& = 0\\
  \geq 1 & = 0 & \geq 1
\end{matrix} \hspace{2em}
\begin{matrix}
  \geq 0 &  \geq 0 & \geq 0\\
  \geq 0 &  \geq 0 & \geq 0\\
  \geq 0 & \geq 0 & \geq 0
\end{matrix} 
\end{equation*}
Working mod $\pi$ we compute
\begin{align*}
  f_1 &= -B_{23} B_{32} C_{11} y^2 z + z^2( \,\, \cdots ) \\
  f_2 &= -A_{33} B_{32} z^2 ( C_{11} x + C_{21} y + C_{31}z) \\
  f_3 &= -A_{33} B_{23} z^2 ( C_{11} x + C_{12} y + C_{13}z)
\end{align*}
Since $S$ is saturated, it is clear that $f_2$ and $f_3$ are nonzero.

We note that multiplying $C$, the last row and the last column by
$\pi$, and then dividing the whole cube by $\pi$, gives an integral
model of the same level which is not saturated. These transformations
are carried out by the procedure in (i), except possibly in the case
where $f_1$ has a repeated factor, and this factor is not $z^2$. In
this remaining case $v(C_{11}) = 0$. We may assume by row and column
operations that $C_{12} \equiv C_{13} \equiv C_{21} \equiv C_{31}
\equiv 0 \pmod{\pi}$.  Subtracting multiples of $A$ and $B$ from $C$
gives $C_{32} = C_{33} = 0 \pmod{\pi}$.  Now $f_1 = C_{11} z ( A_{33}
C_{22} xz - B_{23} B_{32} y^2 - B_{32} C_{23} y z)$, and so $C_{22}
\equiv C_{23} \equiv 0 \pmod{\pi}$.

If the procedure in (i) picks $f_1$ and $f_2$ then we multiply $B$ and
the last row by $\pi$.  Dividing the last two columns by $\pi$ gives a
model of the same level with valuations satisfying
 \begin{equation*}
    \begin{matrix}
      \geq 2 &  \geq 1 & \geq 0\\
      \geq 2 &  \geq 1 & \geq 0\\
      \geq 2 & \geq 1 & = 0
\end{matrix} \hspace{2em}
\begin{matrix}
  \geq 2 &  \geq 1 & \geq 1\\
  \geq 2 &  \geq  1& = 0\\
  \geq 3 & = 1 & \geq 2
\end{matrix} \hspace{2em}
\begin{matrix}
  = 0 &  \geq 0 & \geq 0\\
  \geq 1 &  \geq 0 & \geq 0\\
  \geq 2 & \geq 1 & \geq 1
\end{matrix} 
\end{equation*}
Since the first two columns of $A$ and $B$ are divisible by $\pi$, we
are now in Case~2. The case where the procedure in (i) picks $f_1$ and
$f_3$ works in the same way.

\medskip

\paragraph{{\bf Case 4}}
We assume $(1,1;1,1;0,1)$ is admissible for $S$. The entries of $A,B$
and $C$ have valuations satisfying
\begin{equation*}
    \begin{matrix}
 \geq 2 &  \geq 2 & \geq 1\\
\geq 1 &  \geq 1 & \geq 0\\
\geq 1 &  \geq 1 & \geq 0
\end{matrix} \hspace{2em}
\begin{matrix} 
 \geq 1 &    \geq 1 & \geq 0\\
 \geq 0 &  \geq  0 & \geq 0\\
\geq 0 &  \geq 0  & \geq 0
\end{matrix} \hspace{2em}
\begin{matrix}
\geq 1 &  \geq 1 & \geq 0\\
\geq 0 &  \geq 0 & \geq 0\\
 \geq 0 &  \geq 0 & \geq 0
\end{matrix} 
\end{equation*}
Working mod $\pi$ we compute
\[ f_1 = (B_{13} y + C_{13} z) \left| \begin{pmatrix} B_{21} & B_{22} \\
    B_{31} & B_{32} \end{pmatrix} y + \begin{pmatrix} C_{21} & C_{22} \\
    C_{31} & C_{32} \end{pmatrix} z \right|, \] and
\[ f_2 = (A_{23} y + A_{33} z) \left| \begin{pmatrix} B_{21} & B_{22} \\
    C_{21} & C_{22} \end{pmatrix} y + \begin{pmatrix} B_{31} & B_{32} \\
    C_{31} & C_{32} \end{pmatrix} z \right|. \] Since $S$ is
saturated, the linear factors $\ell_1 = B_{13} y + C_{13} z$ and
$\ell_2 = A_{23} y + A_{33} z$ cannot be identically zero. Let $q_1$
and $q_2$ be the quadratic factors. These are binary quadratic forms
associated to the same $2 \times 2 \times 2$ cube. In particular $q_1$
and $q_2$ have the same discriminant, say $\delta$.  If this cube is
not saturated, it is easy to see we are in Case 1 or Case 2. Therefore
$f_1$ and $f_2$ are nonzero.

Replacing $B$ and $C$ by suitable linear combinations, and likewise
the last two rows, we may suppose that the linear factors $\ell_1$ and
$\ell_2$ are multiples of $z$, i.e.
\begin{equation}
\label{makeitz}
B_{13} \equiv A_{23} \equiv 0  \pmod{\pi}
\end{equation}
Under this assumption $f_3 = -A_{33} C_{13} z^2 ( B_{21} x + B_{22}y +
B_{23} z)$, and this is nonzero as we would otherwise be in Case 2.

If $f_1$ and $f_2$ don't have repeated factors, then each defines a
curve with a unique singular point at $(1:0:0)$. The procedure in (ii)
multiplies $B$, $C$ and the last two rows by $\pi$.  The level is then
reduced using columns operations, in exactly the way suggested by the
definition of Case 4.

Now suppose that at least one of the forms $f_1$ and $f_2$ has a repeated
factor. Then the procedure in (i) is applied. We say we are in the
{\em good situation} if the two of the $f_i$ chosen are multiples of
$z^2$ and $B_{21} \equiv B_{22} \equiv 0 \pmod{\pi}$. Indeed in the
good situation, the procedure in (i) reduces us to Case 1 or Case 2.

Suppose that $f_1$ and $f_3$ are chosen. Dropping the
assumption~\eqref{makeitz} we may assume that $f_1$ has repeated
factor $z^2$. Then $q_1$ has no $y^2$ term and by row operations we
reach the good situation. The case where $f_2$ and $f_3$ are chosen is
similar. Finally we suppose that $f_1$ and $f_2$ are chosen.  If $q_1$
has a factor $z$, we may assume as above that $B_{21} \equiv B_{22}
\equiv 0 \pmod{\pi}$.  But then $q_2$ has a factor $z$. So if
$\delta=0$, i.e.  $q_1$ and $q_2$ each have a repeated factor, then we
reach the good situation.  Otherwise we make the
assumption~\eqref{makeitz}, and deduce that $f_1$ and $f_2$ are now
multiples of $z^2$.  The procedure in (i) multiplies $C$ and the last
row by $\pi$.  The only coefficients not to vanish mod $\pi$ are now
those in the second row of $B$. It follows that after suitable column
operations the level is preserved and we are reduced to Case 2 or Case 3.

\medskip

\paragraph{{\bf Case 5}}
We assume $(1,2;1,2;1,1)$ is admissible for $S$. The entries of 
$A,B$ and $C$ have valuations satisfying
 \begin{equation*}
    \begin{matrix}
 \geq 3 &  \geq 2 & \geq 2\\
\geq 2 & \geq 1 &  \geq 1 \\
\geq 1 & \geq 0 &  \geq 0 
\end{matrix} \hspace{2em}
\begin{matrix}
 \geq 2 & \geq 1 & \geq 1 \\
\geq 1 & \geq 0 &  \geq 0  \\
 \geq 0 &  \geq  0 & \geq 0
\end{matrix} \hspace{2em}
\begin{matrix}
\geq 1 & \geq 0 &  \geq 0 \\
\geq 0 & \geq 0 &  \geq 0 \\
\geq 0 &  \geq 0 & \geq 0
\end{matrix} 
\end{equation*}
Since $S$ is saturated, we may assume by column operations that
$v(A_{32}) \ge 1$ and $v(A_{33})=0$.  Then $v(B_{31}) = v(C_{12}) =
v(C_{21})=0$, otherwise we would be in Case~4. By row and column
operations, and subtracting multiples of $A$ from $B$ and $C$ we
reduce to the case
 \begin{equation*}
    \begin{matrix}
      \geq 3 &  \geq 2 & \geq 2\\
      \geq 2 & \geq 1 &  \geq 1 \\
      \geq 1 & \geq 1 & = 0
\end{matrix} \hspace{2em}
\begin{matrix}
  \geq 2 & \geq 1 & \geq 1 \\
  \geq 1 & \geq 0 &  \geq 0  \\
  = 0 & \geq 1 & \geq 1
\end{matrix} \hspace{2em}
\begin{matrix}
  \geq 1 & = 0 &  \geq 1 \\
  = 0 & \geq 1 &  \geq 1 \\
  \geq 1 & \geq 1 & \geq 1
\end{matrix} 
\end{equation*}
Working mod $\pi$ we compute
\begin{align*}
  f_1 &= C_{12} z ( B_{31} B_{23} y^2 - A_{33} C_{21} x z) \\
  f_2 &= -A_{33} z ( B_{22} C_{21} y^2  - B_{31} C_{12} xz) \\
  f_3 &= -A_{33} C_{12} y z (B_{22}y + B_{23} z)
\end{align*}

If $B_{22} \not\equiv 0 \pmod{\pi}$ and $B_{23} \not\equiv 0
\pmod{\pi}$ then $f_1,f_2,f_3$ each define a curve with a unique
singular point at $(1:0:0)$. If we multiply $B$, $C$, the last two
rows and the last two columns by $\pi$, then the cube is divisible by
$\pi^2$. From this we see that whichever two of the $f_i$ are chosen
by the procedure in (ii), the level is preserved and we are reduced to
Case~2.

If $B_{22} \not\equiv 0 \pmod{\pi}$ and $B_{23} \equiv 0 \pmod{\pi}$
then $f_1$ and $f_3$ have repeated factors but $f_2$ does not. The
procedure in (i) multiplies $C$ and the middle column by $\pi$. Then
dividing the first two rows by $\pi$ preserves the level and reduces
us to Case~4 with $\delta=0$.  The observation that $\delta=0$ is
needed to show that at most three iterations are required, as claimed
in the statement of the theorem.

If $B_{22} \equiv 0 \pmod{\pi}$ and $B_{23} \not\equiv 0 \pmod{\pi}$
then we switch the first two slicings (i.e. $A,B,C$ are replaced by
the matrices formed from the first, second, third rows). Then
switching the last two columns brings us to the situation considered
in the previous paragraph.

Finally, if $B_{22} \equiv B_{23} \equiv 0 \pmod{\pi}$ then we are
already in Case 2.

\medskip

\paragraph{{\bf Case 6}}
We assume $(2,3;1,2;1,2)$ is admissible for $S$. The entries of $A,B$
and $C$ have valuations satisfying
 \begin{equation*}
    \begin{matrix}
      \geq 4 &  \geq 3 & \geq 2\\
      \geq 3 & \geq 2 &  \geq 1 \\
      \geq 2 & \geq 1 & \geq 0
\end{matrix} \hspace{2em}
\begin{matrix}
  \geq 2 & \geq 1 &   \geq 0 \\
  \geq 1 & \geq 0 &  \geq 0  \\
  \geq0 & \geq 0 & \geq 0
\end{matrix} \hspace{2em}
\begin{matrix}
  \geq 1 & \geq0 &  \geq 0 \\
  \geq 0 & \geq 0 &  \geq 0 \\
  \geq 0 & \geq 0 & \geq 0
\end{matrix} 
\end{equation*}
Since $S$ is saturated, we have $v(A_{33})=0$. Then $v(B_{22})=0$,
otherwise we would be in Case~3. We also have $v(C_{12}) =
v(C_{21})=0$, otherwise we would be in Case~4, and $v(B_{13}) =
v(B_{31}) =0$ otherwise we would be in Case~5. By row and column
operations, and subtracting multiples of $A$ from $B$ and $C$ we
reduce to the case
 \begin{equation*}
    \begin{matrix}
      \geq 4 &  \geq 3 & \geq 2\\
      \geq 3 & \geq 2 &  \geq 1 \\
      \geq 2 & \geq 1 & =0
\end{matrix} \hspace{2em}
\begin{matrix}
  \geq 2 & \geq 1 &   = 0 \\
  \geq 1 & =0 &  \geq 0  \\
  = 0 & \geq 0 & \geq 1
\end{matrix} \hspace{2em}
\begin{matrix}
  \geq 1 & =0 &  \geq 1 \\
  =0 & \geq 1 &  \geq 1 \\
  \geq 1 & \geq 1 & \geq 1
\end{matrix} 
\end{equation*}
Working mod $\pi$ we compute
\begin{align*}
  f_1 &= -B_{31} B_{22} B_{13} y^3 - C_{12} C_{21} A_{33} x z^2 
           + (\,\,\cdots) y^2 z\\
  f_2 &= A_{33} z (B_{31} C_{12} x z - C_{21} y ( B_{22}y + B_{32} z)) \\
  f_3 &= A_{33} z (B_{13} C_{21} x z - C_{12} y ( B_{22}y + B_{23} z))
\end{align*}
We see that $f_1,f_2,f_3$ each define a curve with a unique singular
point at $(1:0:0)$. If we multiply $B$, $C$, the last two rows and the
last two columns by $\pi$, then the cube is divisible by $\pi^2$. From
this we see that whichever two of the $f_i$ are chosen by the
procedure in (ii), the level is preserved and we are reduced to
Case~3.
\end{ProofOf}

\section{$2\times2\times2\times2$ hypercubes}
\label{sec:hypercubes}

We consider polynomials in $x_1,x_2, y_1, y_2, z_1, z_2, t_1,t_2$ that
are linear in each of the four sets of variables.  Such a polynomial
may be represented as
\begin{equation}
  \label{Hpoly}
\sum_{1\leq i,j,k,l \leq 2} H_{ijkl} x_i y_j z_k t_l
\end{equation}
where $H = (H_{ijkl})$ is a $2 \times 2 \times 2 \times 2$ hypercube.
A {\em hypercube} $H$ may be partitioned into two $2\times2\times2$
cubes in four distinct ways:
\begin{enumerate}
        \item $A_1 = (H_{1jkl})$ and $B_1=(H_{2jkl})$ 
        \item $A_2 = (H_{i1kl})$ and $B_2=(H_{i2kl})$ 
        \item $A_3 = (H_{ij1l})$ and $B_3=(H_{ij2l})$ 
        \item $A_4 = (H_{ijk1})$ and $B_4=(H_{ijk2})$ 
\end{enumerate}
Let $R$ be a ring. For each $1 \le i \le 4$ there is an action of
$\GL_2(R)$ on the space of hypercubes over $R$ via
\[ \left( \begin{matrix} r & s \\ t & u \end{matrix}\right): (A_i,B_i)
\mapsto (rA_i+sB_i,tA_i+uB_i). \] These actions commute, and so give
an action of $\GL_2(R)^4$.  We say that hypercubes are {\em
  $R$-equivalent} if they belong to the same orbit for this action.

For each $1 \le i < j \le 4$ there is an associated $(2,2)$-form
$F_{ij}$.  Indeed if we view~\eqref{Hpoly} as a bilinear form in $z_k$
and $t_l$, then the determinant of this form is a $(2,2)$-form in
$x_i$ and $y_j$:
\[
F_{12}=(\sum_{1 \leq i,j \leq 2}H_{ij11}x_{i}y_{j})(\sum_{1 \leq i,j
  \leq 2}H_{ij22}x_{i}y_{j})-(\sum_{1 \leq i,j \leq
  2}H_{ij12}x_{i}y_{j})(\sum_{1 \leq i,j \leq 2}H_{ij21}x_{i}y_{j}).
\]
The other $F_{ij}$ are defined similarly.
If $[M_1,M_2,M_3,M_4] \cdot H = H'$ then the $(2,2)$-forms
are related by \[[\det(M_3)\det(M_4),M_1,M_2] \cdot F_{12} = F'_{12}.\]
As seen in Section~\ref{sec:22}, each $(2,2)$-form determines a pair
of binary quartics. It turns out that the binary quartics in
$x_1,x_2$ associated to $F_{12},F_{13},F_{14}$ are all equal.
Thus a hypercube $H$ determines four binary quartics
$G_1, \ldots, G_4$, one in each of the four sets of variables.  Each
of these binary quartics has the same invariants $I$ and $J$.
Therefore the six $(2,2)$-forms $F_{ij}$ all have the same invariants
$c_4$, $c_6$ and $\Delta$.  We define $c_4(H)=c_4(F_{ij})$,
$c_6(H)=c_6(F_{ij})$ and $\Delta(H)=\Delta(F_{ij})$.

If $H$ is defined over a field and $\Delta(H) \not= 0$ then each of
the $F_{ij}$ defines a genus one curve in $\PP^1 \times \PP^1$.  These
curves are isomorphic, although not in a canonical way. 
(See \cite[Section 2.3]{BH} for further details.)  We write
$\CC_H$ to denote any one of them.

Ley $u$ and $v$ be the invariants in Lemma~\ref{lem:inv22}.  We find
that $u(F_{12}) = u(F_{34})$ and $v(F_{12}) = v(F_{34})$.  Therefore
$F_{12}$ and $F_{34}$ determine isomorphic pairs $(E,P)$.  (A~further
calculation is needed to check this in characteristics $2$ and $3$,
but we omit the details.)  Repeating for the other $F_{ij}$ gives a
tuple $(E,P_1,P_2,P_3)$ where $E$ is an elliptic curve and $0_E \not=
P_1,P_2,P_3 \in E$ with $P_1 + P_2 + P_3 = 0_E$.

We say that a hypercube $H$ is {\em integral} if it has coefficients
in $\OK$, and {\em non-singular} if $\Delta(H) \not= 0$.
\begin{lemma}
\label{lem:lev2222}
Let $H$ be a non-singular integral hypercube. Let $(E,P_1,P_2,P_3)$ be
the tuple determined by $H$. Then
\[ v(\Delta(H)) = v(\Delta_E) + 12
\max(\kappa(P_1),\kappa(P_2),\kappa(P_3)) + 12 \ell(H) \] where
$\ell(H) \ge 0$ is an integer we call the {\em level}.
\end{lemma}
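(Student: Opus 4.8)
The plan is to reduce Lemma~\ref{lem:lev2222} to the three-point analogue of Lemma~\ref{lem:lev22}, exactly as Lemmas~\ref{lem:lev22} and~\ref{lem:lev333} were proved. First I would use the formulae implicit in the construction of $(E,P_1,P_2,P_3)$ to produce an integral Weierstrass equation $W$ for $E$ upon which each of $P_1,P_2,P_3$ has integral coordinates: the invariants $c_4,c_6$ of $H$ are polynomials in the $H_{ijkl}$ and, via Lemma~\ref{lem:inv22} applied to (say) $F_{12}$, the $a$-invariants and the marked point $P_3$ are too; the key extra point is that the same Weierstrass equation carries all three marked points, so one needs integral polynomial formulae for $\xi(P_1),\eta(P_1)$ and $\xi(P_2),\eta(P_2)$ as well, compatible with the single choice of $a_1,\ldots,a_6$. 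Once such a $W$ is in hand, $\Delta(H)=\Delta(W)$ and the result reduces to a statement purely about $E$ and the three points.

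Second, I would establish the elliptic-curve input: if $E/K$ has a Weierstrass equation $W$ with integral coefficients on which points $P_1,P_2,P_3$ all have integral coordinates, then
\[ v(\Delta(W)) \ge v(\Delta_E) + 12\max(\kappa(P_1),\kappa(P_2),\kappa(P_3)), \]
with equality achieved by some such $W$. This is the three-point refinement of the single-point fact quoted in the proof of Lemma~\ref{lem:lev22}. The proof is the standard one: write $W = [u;r,s,t]$-transform of a minimal model $W_{\min}$, so $v(\Delta(W)) = v(\Delta_E) + 12\,v(u)$; the condition that $P_i$ be integral on $W$ forces $v(u)\ge \kappa(P_i)$ for each $i$ (this is precisely the definition of $\kappa$ together with the transformation law $x = u^2 x' + r$, $y = u^3 y' + \ldots$, noting that if $v(x_{P_i}) = -2\kappa(P_i)<0$ on $W_{\min}$ then integrality on $W$ needs $v(u)\ge\kappa(P_i)$, while $\kappa(P_i)=0$ imposes nothing); hence $v(u)\ge\max_i\kappa(P_i)$, and conversely $v(u)=\max_i\kappa(P_i)$ with a suitable choice of $r,s,t$ makes all three points integral simultaneously — here one uses that on a minimal model a non-torsion-in-formal-group point with $\kappa(P_i)=0$ already has integral coordinates, so no obstruction arises from mixing. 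Then $\ell(H) := (v(\Delta(H)) - v(\Delta_E) - 12\max_i\kappa(P_i))/12$ is a non-negative integer.

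Finally I would check integrality of $\ell(H)$ — that the right-hand side is divisible by $12$ — which is automatic since $v(\Delta(H))$, $v(\Delta_E)$ and $12\max_i\kappa(P_i)$ are all congruent mod $12$ once we know $\Delta(H) = \Delta(W)$ for a $W$ of the above type and $v(\Delta_E)\equiv v(\Delta(W))\pmod{12}$ by the transformation law. So the proof can be written as: "The formulae constructing $(E,P_1,P_2,P_3)$ from $H$ give an integral Weierstrass equation $W$ for $E$ on which $P_1,P_2,P_3$ all have integral coordinates, and $\Delta(H) = \Delta(W)$; the smallest discriminant of such an equation is $v(\Delta_E) + 12\max(\kappa(P_1),\kappa(P_2),\kappa(P_3))$, so the result follows," mirroring Lemma~\ref{lem:lev22}.

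The main obstacle I anticipate is the first step: verifying that the natural polynomial formulae really do put all three marked points on a common integral Weierstrass equation. For a single point this is Lemma~\ref{lem:inv22}, but for a hypercube one must produce $\xi(P_i),\eta(P_i)\in\Z[H_{ijkl}]$ for $i=1,2,3$ simultaneously, using one fixed set of $a$-invariants $a_1,\ldots,a_6$, and check the three Weierstrass-equation identities hold. Presumably this is done by applying Lemma~\ref{lem:inv22} to each of $F_{12},F_{13},F_{14}$ and reconciling the three resulting Weierstrass equations (which share the binary quartic $G_1$ and hence $c_4,c_6$) via the change-of-variables formulae, as in the proof of Lemma~\ref{lem:inv333}; the compatibility $u(F_{12})=u(F_{34})$, $v(F_{12})=v(F_{34})$ noted in the text is what glues $P_1$ into the tuple, and a computer algebra verification (as in Lemma~\ref{lem:inv333}) handles the integrality of the resulting $r,s,t$ and of $a_4,a_6$.
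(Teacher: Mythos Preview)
Your approach is viable in principle but much more elaborate than what the paper does, and the ``main obstacle'' you correctly identify is entirely avoidable.

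The paper's proof is one line: ``This is immediate from Lemma~\ref{lem:lev22}.'' The point is that you do \emph{not} need a single integral Weierstrass equation carrying all three marked points. Each associated $(2,2)$-form $F_{ij}$ is integral with $\Delta(F_{ij}) = \Delta(H)$, and (as set up in the text just before the lemma) each of the three pairs $\{F_{12},F_{34}\}$, $\{F_{13},F_{24}\}$, $\{F_{14},F_{23}\}$ determines one of the points $P_1,P_2,P_3$. Applying Lemma~\ref{lem:lev22} to one $F_{ij}$ from each pair gives
\[
v(\Delta(H)) = v(\Delta_E) + 12\,\kappa(P_m) + 12\,\ell(F_{ij}), \qquad \ell(F_{ij}) \ge 0,
\]
for $m=1,2,3$. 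Since the left-hand side is independent of $m$, the quantity $\kappa(P_m) + \ell(F_{ij})$ is constant; taking the $m$ realising $\max_m \kappa(P_m)$ yields the statement with $\ell(H) = \min_{ij}\ell(F_{ij}) \ge 0$. (This identity $\ell(H) = \min \ell(F_{ij})$ is in fact used later, in Section~\ref{sec:minthm}.)

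By contrast, your route requires producing integral polynomial expressions for all three $(\xi(P_i),\eta(P_i))$ on a \emph{common} set of $a$-invariants and then reconciling the three Weierstrass equations coming from $F_{12},F_{13},F_{14}$ via integral substitutions $[1;r,s,t]$. That reconciliation is not automatic: the three equations share $c_4,c_6$ but may have different $a_1,\ldots,a_6$, and you would need to check that the change of variables preserves integrality of the already-placed point. None of this is needed once you observe that Lemma~\ref{lem:lev22} can be applied three times, independently.
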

\begin{proof}
This is immediate from Lemma~\ref{lem:lev22}.
\end{proof}

An integral hypercube is {\em saturated} if for all $1 \le i \le 4$
the cubes $A_i$ and $B_i$ are linearly independent mod $\pi$. If an
integral hypercube is not saturated, then it is obvious how we may
decrease the level.

Our algorithm for minimising hypercubes is described by the following
theorem.

\begin{thm} 
\label{thmH}
Let $H$ be a saturated hypercube with
associated $(2,2)$-forms $F_{ij}$. Suppose
that all of the $F_{ij}$ are non-minimal. 
Then by an $\OK$-equivalence, and permuting the sets of variables, 
we are in one of the following two situations:
\begin{enumerate}
\item The reduction of $F_{12}$ mod $\pi$ defines a curve in $\PP^1
  \times \PP^1$ with a unique singular point at $((1:0),(1:0))$, and
  the transformation
\begin{equation}
\label{trans12}
\left[ \frac{1}{\pi} \begin{pmatrix} 1 & 0 \\ 0 & \pi \end{pmatrix},
  \begin{pmatrix} 1 & 0 \\ 0 & \pi \end{pmatrix},
  \begin{pmatrix} 1 & 0 \\ 0 & 1 \end{pmatrix},
  \begin{pmatrix} 1 & 0 \\ 0 & 1 \end{pmatrix} \right]
\end{equation}
gives an integral hypercube of the same level.
\item We have $F_{12} \equiv x_2^2 y_2^2 \pmod{\pi}$ and 
  the transformation~\eqref{trans12} gives a non-saturated
  hypercube of the same level.
\end{enumerate}
Moreover, at most two iterations of the procedure in (i) are needed
to give a non-saturated hypercube, or to reach the situation in (ii).
\end{thm}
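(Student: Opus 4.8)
The plan is to reduce everything to the $(2,2)$-form case handled by Theorem~\ref{thm:min22} and Lemma~\ref{lemB}, using the relation
\[[\det(M_3)\det(M_4),M_1,M_2] \cdot F_{12} = F'_{12}\]
(and its analogues) to transfer minimisation data between the hypercube $H$ and its associated $(2,2)$-forms $F_{ij}$. Since $H$ is saturated but all the $F_{ij}$ are non-minimal, we first argue that we may replace $H$ by an $\OK$-equivalent hypercube for which a diagonal transformation reduces the level of some $F_{ij}$; this is the analogue of the ``admissible pair'' discussion preceding Lemma~\ref{lem:wts22}. Running the proof of Theorem~\ref{thm:min22} on, say, $F_{12}$, and keeping track of which of the three cases (i), (ii), (iii) of that theorem occurs, we learn how $f_{12} := F_{12} \bmod \pi$ degenerates. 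The key point is that, after permuting the four sets of variables and applying an $\OK$-equivalence, we may assume the relevant degeneration happens for $F_{12}$ and is either the ``unique singular point at $((1:0),(1:0))$'' case or the ``$f_{12} \equiv x_2^2 y_2^2$'' case — these are exactly situations (i) and (ii) in the statement. (The case where $f_{12}$ is a product of two binary quadratics each with a repeated root collapses into the second situation after a coordinate change; the case where only one factor has a repeated root is handled by switching the $x$ and $y$ sets of variables and does not change the level.)

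Next I would use Lemma~\ref{lemB} to read off the valuations of the coefficients of $F_{12}$ in each situation, and then pull this back to constraints on the entries $H_{ijkl}$ of the hypercube. Concretely: writing $F_{12}$ in the $3\times 3$ matrix form~\eqref{mat:22}, its entries $a_{rs}$ are quadratic expressions in the $H_{ij11}, H_{ij12}, H_{ij21}, H_{ij22}$, so the valuation inequalities of Lemma~\ref{lemB} give (via the saturation hypothesis, which forbids the ``trivial'' ways of making these vanish) valuation inequalities on the $H_{ijkl}$ themselves. In situation (i) — singular point at $((1:0),(1:0))$ — one expects to deduce that $H_{11kl} \equiv 0 \pmod \pi$ for all $k,l$, i.e.\ the entries with $x$-index and $y$-index both equal to $1$ vanish mod $\pi$; this is precisely what is needed for the transformation~\eqref{trans12} (which multiplies the second $x$-slice and second $y$-slice by $\pi$ and divides by $\pi$) to yield an integral hypercube, and the weight bookkeeping $\det$-factors show the level is unchanged. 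In situation (ii), Lemma~\ref{lemB}(ii) gives the stronger vanishing needed to make the output of~\eqref{trans12} divisible by $\pi$ in a way that leaves it non-saturated.

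Finally, for the ``at most two iterations'' claim I would argue as in the last sentence of Theorem~\ref{thm:min22}: after applying~\eqref{trans12} in situation (i), the new $F_{12}$ either has $v(F_{12}) \ge 1$ — in which case $H$ has become non-saturated (or can be divided through) — or its reduction mod $\pi$ has the special form $x_2^2 y_2^2$, putting us in situation (ii), which terminates. So one application of (i) is always followed by termination, giving the bound of two. The main obstacle I anticipate is the middle step: verifying that the valuation constraints from Lemma~\ref{lemB} on the \emph{derived} $(2,2)$-form $F_{12}$, combined with saturation of $H$ and the parallel constraints coming from $F_{13}$ and $F_{14}$ (whose $x$-binary-quartics all coincide, as noted in the text), really do force the needed vanishing pattern on the raw hypercube entries $H_{ijkl}$ rather than merely on the quadratic combinations of them — this requires a genuine (if finite) case analysis of how the singular locus of $\CC_{f_{12}}$ interacts with the bilinear-form-determinant structure defining $F_{12}$, and it is here that one must be careful not to miss a degenerate sub-case where $H$ is still saturated but the argument for integrality of~\eqref{trans12} applied to $H$ breaks down.
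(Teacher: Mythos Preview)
Your overall strategy matches the paper's: reduce to the $(2,2)$-form case via Theorem~\ref{thm:min22} and Lemma~\ref{lemB}, and pull the resulting valuation constraints back to the entries $H_{ijkl}$. The paper organises the ``middle step'' you flag as the main obstacle into a separate preparatory lemma (Lemma~\ref{lemA}), which secures $H_{11kl}\equiv 0\pmod\pi$, and then a further case split on whether $H_{2211},H_{2121},H_{2112}$ all vanish mod~$\pi$; your sketch is compatible with this but stops short of the actual case analysis, which does require several coordinate permutations beyond what you indicate.

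There is, however, a genuine gap in your termination argument. You track only $F_{12}$: after one application of~\eqref{trans12} you invoke the last sentence of Theorem~\ref{thm:min22} to conclude that the new $F_{12}$ is either divisible by~$\pi$ or reduces to $x_2^2y_2^2$, and declare termination. But the theorem allows permuting the four sets of variables at each step, so the pair playing the role of ``$12$'' at the next iteration may be a different $F_{ij}$; knowing that one of the six forms has become slender does not prevent a return to situation~(i) via another. The paper closes this gap with an additional lemma (Lemma~\ref{viaBQ}) which exploits the shared binary quartic of $F_{12}$ and $F_{13}$ to propagate slenderness from $F_{12}$ to $F_{13},F_{14},F_{23},F_{24}$; only $F_{34}$, which~\eqref{trans12} leaves untouched, can remain non-slender, and this is precisely why two iterations (not one) are needed. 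Separately, your assertion that $v(F_{12})\ge 1$ forces $H$ to be non-saturated is not justified: $F_{12}$ is a determinant in the $H_{ijkl}$, and its vanishing mod~$\pi$ does not immediately yield a linear dependence between the two $2\times2\times2$ slices in any one direction.
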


We initially used the methods in Sections~\ref{sec:22}
and~\ref{sec:cubes} to prove Theorem~\ref{thmH} under the 
hypothesis that $H$ is non-minimal. The advantage of the theorem as
stated here is that it has the following consequence.

\begin{Corollary}
\label{cor}
Let $H$ be a integral hypercube with associated $(2,2)$-forms
$F_{ij}$. Then $H$ is minimal if and only if some $F_{ij}$ is minimal.
\end{Corollary}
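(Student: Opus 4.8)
The plan is to deduce Corollary~\ref{cor} from Theorem~\ref{thmH} together with Remark~\ref{remFG} and the analogous observation for hypercubes. One direction is the easy one: if some $F_{ij}$ is minimal, then $H$ is minimal. Indeed, if $H$ were non-minimal, then (after clearing denominators and dividing by $\pi$ as necessary) we could replace $H$ by an $\OK$-equivalent saturated hypercube which is still non-minimal, hence with $v(\Delta(H))$ unchanged; but if $H$ is non-minimal then certainly $v(\Delta(H))$ can be decreased, so every $F_{ij}$ associated to it would have to be non-minimal by~\eqref{22->2} — wait, more carefully: a non-minimal saturated hypercube is, by Theorem~\ref{thmH}, $\OK$-equivalent to one of the two listed forms, and in either case the transformation~\eqref{trans12} either strictly reduces the level of the hypercube (after a finite number of iterations) or produces a non-saturated hypercube whose level can then be reduced. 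In either case $v(\Delta(H))$ strictly decreases. But $v(\Delta(H)) = v(\Delta(F_{ij}))$ for every $i<j$, so each $F_{ij}$ is non-minimal too. Taking the contrapositive: if some $F_{ij}$ is minimal, then $H$ is minimal.

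For the converse, suppose $H$ is minimal but that \emph{every} $F_{ij}$ is non-minimal; I want a contradiction. First, if $H$ is not saturated we may replace it by an $\OK$-equivalent saturated hypercube with strictly smaller level, contradicting minimality of $H$ (since $v(\Delta(H)) = v(\Delta_E) + 12\max_i\kappa(P_i) + 12\ell(H)$ by Lemma~\ref{lem:lev2222}, and the $(E,P_1,P_2,P_3)$ are invariants). So $H$ is saturated. Now apply Theorem~\ref{thmH}: after an $\OK$-equivalence and a permutation of the four sets of variables, we are in situation (i) or (ii). In situation (ii), $H$ is $\OK$-equivalent to one that is taken by~\eqref{trans12} to a non-saturated hypercube of the same level, which then has strictly smaller level after desaturating — contradicting minimality. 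So we are in situation (i): the transformation~\eqref{trans12} applied to (an $\OK$-equivalent copy of) $H$ gives an integral hypercube $H'$ of the \emph{same} level, and by the ``moreover'' clause, after at most two such iterations we reach either a non-saturated hypercube (contradiction as before) or situation (ii) (contradiction as above). Either way we contradict the minimality of $H$, which proves that some $F_{ij}$ must in fact be minimal.

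To make the second paragraph airtight I need one small point that is exactly the content of Remark~\ref{remFG} and its hypercube analogue: the operations invoked in Theorem~\ref{thmH} (iterating~\eqref{trans12}, or desaturating) never increase the level of the hypercube, and all strictly decrease it unless we are in the stable situation described there; combined with the fact that $v(\Delta(H))$ is expressed through the invariants $\Delta_E$, $\kappa(P_i)$ and the level $\ell(H)$, ``non-minimal hypercube'' is equivalent to ``the level can be strictly decreased.'' I would state this equivalence once, at the start of the proof, citing Lemma~\ref{lem:lev2222} — and also record the parallel fact for $(2,2)$-forms via Lemma~\ref{lem:lev22}, so that ``$F_{ij}$ non-minimal'' means ``$v(\Delta(F_{ij}))$ can be decreased,'' i.e. ``$\ell(F_{ij})$ or $\kappa$ of the relevant marked point can be decreased.'' The key link is $v(\Delta(H)) = v(\Delta(F_{ij}))$ for all $i<j$, so a transformation decreasing one decreases the other.

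The main obstacle is bookkeeping rather than mathematics: one must be careful that the ``permuting the sets of variables'' allowed in Theorem~\ref{thmH} does not matter, because the statement of the corollary is symmetric in all six $F_{ij}$ — and that an $\OK$-equivalence applied to $H$ induces $\OK$-equivalences on all the $F_{ij}$ (clear from the formula $[\det(M_3)\det(M_4),M_1,M_2]\cdot F_{12} = F'_{12}$ and its analogues), so neither minimality of $H$ nor minimality of any $F_{ij}$ is affected by such a replacement. Once that is noted, the argument is just the contrapositive chase above. I expect the whole proof to be under a page.

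\begin{proof}
By Lemma~\ref{lem:lev2222} the level of an integral hypercube, and by
Lemma~\ref{lem:lev22} the level of a $(2,2)$-form, are obtained from
the valuation of the discriminant by subtracting quantities
($v(\Delta_E)$ and multiples of $\kappa$) that depend only on the
isomorphism class of the associated data $(E,P_1,P_2,P_3)$, resp.
$(E,P)$, and hence only on the invariants.  Since an integral model is
non-minimal precisely when its level can be strictly decreased, and
since $v(\Delta(H)) = v(\Delta(F_{ij}))$ for every $1 \le i < j \le
4$, a transformation in $\GGG(K)$ strictly decreasing the level of one
of these strictly decreases the level of the other.  Also, an
$\OK$-equivalence applied to $H$ induces $\OK$-equivalences on all the
$F_{ij}$, by the formula displayed before Lemma~\ref{lem:lev2222} and
its analogues; hence replacing $H$ by an $\OK$-equivalent hypercube
changes neither the minimality of $H$ nor that of any $F_{ij}$, and
likewise permuting the four sets of variables only permutes the
$F_{ij}$.

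Suppose first that some $F_{ij}$ is minimal but $H$ is non-minimal.
Dividing through by $\pi$ if necessary, and then replacing $H$ by an
$\OK$-equivalent saturated hypercube (which, if $H$ were not already
saturated, strictly decreases the level, so is certainly possible), we
may assume $H$ is saturated and still non-minimal.  By
Theorem~\ref{thmH}, after an $\OK$-equivalence and a permutation of
the sets of variables, we are in situation~(i) or~(ii) of that
theorem.  In either case the ``moreover'' clause shows that finitely
many applications of the transformation~\eqref{trans12}, each
preserving the level, eventually produce either a non-saturated
hypercube or a hypercube as in~(ii); desaturating in the former case,
or applying~\eqref{trans12} once more in the latter, yields a
non-saturated hypercube of the same level, whose level can then be
strictly decreased.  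Thus the level of $H$ can be strictly decreased,
so $v(\Delta(H))$ decreases; but $v(\Delta(H)) = v(\Delta(F_{ij}))$,
contradicting the minimality of $F_{ij}$.  Hence if some $F_{ij}$ is
minimal then $H$ is minimal.

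Conversely, suppose $H$ is minimal but every $F_{ij}$ is non-minimal.
If $H$ were not saturated we could decrease its level, contradicting
minimality; so $H$ is saturated.  By Theorem~\ref{thmH}, after an
$\OK$-equivalence and a permutation of the sets of variables, we are
in situation~(i) or~(ii).  If we are in~(ii), then~\eqref{trans12}
gives a non-saturated hypercube of the same level, which then has
strictly smaller level after desaturating --- contradicting the
minimality of $H$.  So we are in~(i).  Applying~\eqref{trans12} gives
an integral hypercube of the \emph{same} level; by the ``moreover''
clause, after at most two such iterations we reach either a
non-saturated hypercube or the situation in~(ii).  In the first case
desaturating decreases the level; in the second case~\eqref{trans12}
followed by desaturating decreases the level.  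Either way we contradict
the minimality of $H$.  Therefore some $F_{ij}$ is minimal.
\end{proof}
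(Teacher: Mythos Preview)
Your third paragraph --- the direction ``all $F_{ij}$ non-minimal $\Rightarrow$ $H$ non-minimal'' --- is correct and is exactly the paper's intended argument: this is the direction that genuinely uses Theorem~\ref{thmH}, and your bookkeeping about saturation, iterations of~\eqref{trans12}, and the ``moreover'' clause is fine.

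The problem is your second paragraph. You assume some $F_{ij}$ is minimal and $H$ is non-minimal, and then invoke Theorem~\ref{thmH}. But the hypothesis of Theorem~\ref{thmH} is precisely that \emph{all} the $F_{ij}$ are non-minimal --- not that $H$ is non-minimal. You are applying the theorem in a situation where its hypothesis is explicitly violated. This is not a cosmetic slip: the whole point of the sentence preceding the Corollary in the paper is that Theorem~\ref{thmH} is stated with the weaker hypothesis ``all $F_{ij}$ non-minimal'' rather than ``$H$ non-minimal'', and it is this choice of hypothesis that makes the Corollary a consequence rather than an assumption.

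The fix is that this direction needs no machinery at all. If $H$ is non-minimal, then by definition there is $g \in \GL_2(K)^4$ with $g\cdot H$ integral and $v(\Delta(g\cdot H)) < v(\Delta(H))$. The associated $(2,2)$-forms of $g\cdot H$ are integral (being polynomials in the entries), are $K$-equivalent to the $F_{ij}$ via the displayed formula $[\det(M_3)\det(M_4),M_1,M_2]\cdot F_{12}=F'_{12}$, and have $v(\Delta(F'_{ij})) = v(\Delta(g\cdot H)) < v(\Delta(F_{ij}))$. Hence every $F_{ij}$ is non-minimal. Taking the contrapositive gives the easy direction in one line. Your first paragraph already records the relevant facts; you just need to use the definition of ``non-minimal'' directly instead of routing through Theorem~\ref{thmH}.
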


\begin{Remark}
We may represent $H = (H_{ijkl})$ as a $4\times4$ matrix:
\begin{equation}
\label{Hmat}
\left(
\begin{array}{cc|cc}
  H_{1111} &  H_{1211} & H_{1112}&  H_{1212}\\
 H_{2111} & H_{2211} & H_{2112} &  H_{2212}\\ \hline
  H_{1121} &  H_{1221} & H_{1122}&  H_{1222}\\
 H_{2121} & H_{2221} & H_{2122} &  H_{2222}\\
\end{array} \right).
\end{equation}
If we write $r_1,r_2,r_3,r_4$ for the rows, then the first copy of
$\GL_2$ acts by row operations simultaneously on $\{r_1,r_2\}$ and
$\{r_3,r_4\}$, the third copy of $\GL_2$ acts by row operations on
$\{r_1,r_3\}$ and $\{r_2,r_4\}$, and the other two copies of $\GL_2$
act by column operations.
\end{Remark}

\begin{Remark}
  Let $H$ be an integral hypercube with associated binary quartics
  $G_1, \ldots, G_4$. It is clear that if any of the $G_i$ are minimal
  then $H$ is minimal. However the converse is not true. For example
  if
 \begin{equation*}
\label{Hmat:ce}
H \equiv 
\left( \begin{array}{cc|cc}
    1 & 0 & 0 &  0 \\
    0 &  1 & 0 & 0 \\ \hline
    0 &  0 & 1 & 0\\
    0 &  0 & 0 & 1
\end{array} \right) \pmod{\pi^2}
\end{equation*}
then $H$ is minimal (since $F_{12} \equiv (x_1 y_1 + x_2 y_2)^2 \pmod{\pi^2}$ 
and we saw in Remark~\ref{remFG} that this is minimal), 
yet we have $G_1 \equiv \ldots \equiv G_4 \equiv 0 \pmod{\pi^2}$.
\end{Remark}

For the proof of Theorem~\ref{thmH} we need the following lemma.
\begin{lemma}
\label{lemA}
Let $H$ be an integral hypercube.  Suppose that at least one of the
associated $(2,2)$-forms $F_{ij}$ is non-minimal.  Then by an
$\OK$-equivalence, and permuting the sets of variables, we may assume
$H_{11kl} \equiv 0 \pmod{\pi}$ for all $1 \le k,l \le 2$.
\end{lemma}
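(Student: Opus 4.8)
The plan is to pass from the $(2,2)$-form $F_{ij}$ to the hypercube $H$ itself, using the structure of $F_{ij}$ as a determinant of a bilinear form. Suppose without loss of generality that $F_{12}$ is non-minimal. By Lemma~\ref{lem:inv22} and the results of Section~\ref{sec:22}, $F_{12}$ determines a pair $(E,P)$, and the minimisation theory for $(2,2)$-forms (Theorem~\ref{thm:min22} together with Lemma~\ref{lemB}) tells us that after an $\OK$-equivalence of $F_{12}$ we may assume that its reduction mod $\pi$ either defines a curve in $\PP^1 \times \PP^1$ singular at $((1:0),(1:0))$, or equals $x_2^2 y_2^2$. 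In either case Lemma~\ref{lemB} gives explicit divisibility conditions on the coefficients of $F_{12}$; in particular the coefficient of $x_1^2 y_1^2$ (the top-left entry in the matrix~\eqref{mat:22}) lies in $\pi\OK$. The first step is therefore to lift the $\GL_2$-equivalence realising this normal form for $F_{12}$ to an $\OK$-equivalence of $H$ acting on the first two sets of variables; this is possible because the action of $[M_1,M_2,M_3,M_4]$ on $H$ induces the action $[\det(M_3)\det(M_4),M_1,M_2]$ on $F_{12}$, so we simply take $M_3 = M_4 = I$.

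Next I would translate the information "$H_{11kl}$, viewed via $F_{12}$, has small leading coefficient" into a statement that the entire slice $H_{11kl}$ ($1 \le k,l \le 2$) is divisible by $\pi$. The key observation is that $F_{12}$ is the determinant $H_{ij11}H_{ij22} - H_{ij12}H_{ij21}$ of the $2\times 2$ matrix of binary forms $\bigl(\sum_{i,j}H_{ijkl}x_iy_j\bigr)_{kl}$, so the vanishing mod $\pi$ of the appropriate coefficients of $F_{12}$ says that this $2\times 2$ matrix, evaluated at $(x_1:x_2)=(y_1:y_2)=(1:0)$, has determinant $\equiv 0 \pmod \pi$ — i.e.\ the matrix $(H_{11kl})_{kl}$ is singular mod $\pi$. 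After a further $\GL_2 \times \GL_2$ change on the $z$- and $t$-variables (which does not disturb the $F_{12}$ normal form, since it only rescales by $\det(M_3)\det(M_4)$ and permutes the $H_{11kl}$ among themselves), we may assume $H_{1111} \equiv 0 \pmod\pi$. I would then use the remaining cases of the normal form from Lemma~\ref{lemB}(i),(ii) — which also force, e.g., $H_{1121}$, $H_{1112}$ to vanish mod $\pi$ through the other low-valuation entries of the matrix~\eqref{mat:22} — to knock out the remaining entries $H_{11kl}$ one at a time, each time using a row or column operation on the $z$- or $t$-variables that fixes the part of $H$ already normalised.

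The main obstacle I anticipate is the bookkeeping in the last step: showing that the several divisibility conditions supplied by Lemma~\ref{lemB} are jointly strong enough to force \emph{all four} entries $H_{11kl}$ into $\pi\OK$ after a single coordinated $\OK$-equivalence, rather than only some of them. This requires matching the pattern of inequalities in Lemma~\ref{lemB} against the bilinear-determinant expression for $F_{12}$ and checking that the "free" $\GL_2 \times \GL_2$ action on $(z_k)$ and $(t_l)$ has enough room to diagonalise the singular $2\times 2$ matrix $(H_{11kl})_{kl}$ mod $\pi$ while keeping it in the kernel direction. Once one sees that, via the case $f = x_2^2y_2^2$ of Lemma~\ref{lemB}(ii), the slice $(H_{11kl})$ is not merely singular but actually \emph{zero} mod $\pi$, or can be made so, the result follows. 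Permuting the four sets of variables at the outset lets us arrange that the non-minimal $(2,2)$-form is $F_{12}$ and that the distinguished slice is the $x_1 = y_1 = \text{(first index)}$ one, completing the reduction.
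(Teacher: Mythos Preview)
Your opening is fine: after normalising $F_{12}$ so that its reduction is singular at $((1:0),(1:0))$, the coefficient $a_{11}$ of $x_1^2 y_1^2$ in $F_{12}$ vanishes mod $\pi$, and since $a_{11} = H_{1111}H_{1122} - H_{1112}H_{1121}$ this says exactly that the $2 \times 2$ matrix $(H_{11kl})$ is singular mod $\pi$. But the subsequent plan --- to use the remaining coefficient conditions from Lemma~\ref{lemB} to ``knock out the remaining entries $H_{11kl}$ one at a time'' --- does not go through. If $(H_{11kl})$ has rank exactly $1$ mod $\pi$, no $\GL_2 \times \GL_2$ action on the $z,t$ variables will make it vanish, and the other vanishing coefficients $a_{12}, a_{21}, a_{22}$ of $F_{12}$ constrain \emph{other} entries of $H$ (such as $H_{1211}$ and $H_{2111}$), not the surviving nonzero entry of $(H_{11kl})$. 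Your appeal to Lemma~\ref{lemB}(ii) does not rescue this: the case $f = x_2^2 y_2^2$ is a special situation, not one to which you can always reduce.

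The paper resolves this by making the opposite normalisation. Assuming $(H_{11kl})$ is nonzero mod $\pi$, it arranges that $H_{1111}$ is a unit and uses it to clear the four neighbours $H_{2111}, H_{1211}, H_{1121}, H_{1112}$. Then the vanishing of $a_{11}, a_{12}, a_{21}$ forces $H_{1122} \equiv H_{1222} \equiv H_{2122} \equiv 0 \pmod{\pi}$, and the disjunction ``$a_{13} \equiv 0$ or $a_{31} \equiv 0$'' from Lemma~\ref{lemB}(i) gives, after permuting variable sets, $H_{1212} \equiv 0 \pmod{\pi}$. Now all four entries $H_{1jk2}$ vanish mod $\pi$ --- a \emph{different} slice from the one you were targeting --- and a final permutation converts this into the statement of the lemma. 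The point you are missing is that the permutation of variable sets is not merely cosmetic bookkeeping ``at the outset'': it is genuinely needed at the end to pass from the slice one can actually kill to the slice named in the conclusion.
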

\begin{proof} 
  We suppose that $F_{12}$ is non-minimal.  If the reduction of
  $F_{12}$ mod $\pi$ is non-zero, then by Theorem~\ref{thm:min22} it
  defines a curve in $\PP^1 \times \PP^1$ with singular locus a point,
  a line or a pair of lines.  We may assume by an $\OK$-equivalence
  that the curve is singular at $((1:0),(1:0))$. If $H_{11kl}
  \not\equiv 0 \pmod{\pi}$ for some $1 \le k,l \le 2$ then we may
  assume by an $\OK$-equivalence that $H_{1111} \not\equiv 0
  \pmod{\pi}$.  A further $\OK$-equivalence gives
  \[ H_{2111} \equiv H_{1211} \equiv H_{1121} \equiv H_{1112} \equiv 0
  \pmod{\pi}. \] Since the coefficients of $x_1^2 y_1^2$, $x_1^2 y_1
  y_2$ and $x_1 x_2 y_1^2$ in $F_{12}$ vanish mod $\pi$, we have
  \[ H_{1122} \equiv H_{1222} \equiv H_{2122} \equiv 0 \pmod{\pi}. \]
  Lemma~\ref{lemB}(i) now shows that either
  \[ H_{1221} H_{1212} \equiv 0 \pmod{\pi} \quad \text{ or } \quad
  H_{2121} H_{2112} \equiv 0 \pmod{\pi}. \] By switching the first two
  sets of variables and switching the last two sets of variables, as
  necessary, we may assume $H_{1212} \equiv 0 \pmod{\pi}$. Now
  $H_{1jk2} \equiv 0 \pmod{\pi}$ for all $1 \le j,k \le 2$, and this
  proves the lemma.
\end{proof}

\medskip

\begin{ProofOf}{Theorem~\ref{thmH}} 
By Lemma~\ref{lemA} we may assume $H_{11kl} \equiv 0 \pmod{\pi}$ 
for all $1 \le k,l \le 2$. Applying Lemma~\ref{lemB}(i) to $F_{12}$,
and switching the first two sets of variables if necessary, we have
\[ H_{1211} H_{1222} - H_{1212} H_{1221} \equiv 0 \pmod{\pi}. \] By an
$\OK$-equivalence we may assume $H_{1jkl} \equiv 0 \pmod{\pi}$ for all
$1 \le j,k,l \le 2$, except $(j,k,l) = (2,2,2)$.  Since $H$ is
saturated we have $H_{1222} \not\equiv 0 \pmod{\pi}$.  Again by
Lemma~\ref{lemB}(i) we have $H_{2111} \equiv 0 \pmod{\pi}$.

We now split into cases, according as to whether 
\begin{equation}
\label{star}
  H_{2211} \equiv H_{2121} \equiv H_{2112} \equiv 0 \pmod{\pi}. 
\end{equation} 

If this condition is not satisfied, then by permuting the last three
sets of variables, we may suppose $H_{2211} \not\equiv 0
\pmod{\pi}$. By an $\OK$-equivalence we have
 \begin{equation}
\label{Hmat:case1}
H \equiv 
\left( \begin{array}{cc|cc}
    0 &  0 & 0 & 0 \\
    0 &  1 & \beta & 0 \\ \hline
    0 &  0 & 0 & 1 \\
    \alpha &  0 & \gamma & 0
  \end{array} \right) \pmod{\pi}
\end{equation}
for some $\alpha,\beta,\gamma \in k$.  We compute $F_{12} \equiv x_1
x_2 y_2^2 + x_2^2( \alpha \beta y_1^2 + \gamma y_1 y_2)
\pmod{\pi}$. The conclusions in (i) are satisfied unless $\alpha \beta
= \gamma = 0$. In the remaining case we may assume, by switching the
last two sets of variables if necessary, that $\alpha = 0$.  Now
switching the first and last sets of variables, and swapping over the
third set of variables (i.e. $z_1 \leftrightarrow z_2$), we may swap
over $\beta$ and $\gamma$. Therefore $\beta = \gamma = 0$, and this
contradicts that $H$ is saturated.

Now suppose the condition~\eqref{star} is satisfied. Then by an
$\OK$-equivalence (and our assumption that $H$ is saturated) we have
\begin{equation}
\label{Hmat:case2}
H  \equiv  
\left( \begin{array}{cc|cc}
    0 & 0 & 0 & 0 \\
    0 &  0 & 0 & 1 \\ \hline
    0 &  0 & 0 & 1\\
    0 &  -1 & -1 &  0
  \end{array} \right) \pmod{\pi}.
\end{equation}
We compute $F_{12} \equiv x_2^2 y_2^2 \pmod{\pi}$.  Let $F_{12}$ have
coefficients $a_{ij}$ as labelled in~\eqref{mat:22}.
Lemma~\ref{lemB}(ii) shows that either $v(a_{12}) \ge 2$ or $v(a_{21})
\ge 2$. Therefore $v(H_{1111}) \ge 2$.  Again by Lemma~\ref{lemB}(ii)
we have either $v(a_{11}) \ge 3$, $v(a_{13}) \ge 2$ or $v(a_{31}) \ge
2$. Therefore at least one of the coefficients $H_{2111}$, $H_{1211}$,
$H_{1121}$, $H_{1112}$ has valuation at least two. By permuting the
sets of variables we may suppose $v(H_{1112}) \ge 2$.  The conclusions
in (ii) are now satisfied.

To prove the last part of the theorem, we need the following lemma.

\begin{lemma}
\label{viaBQ}
Let $H$ be a hypercube over a field $k$ with associated $(2,2)$-forms
$F_{ij}$. We write
\begin{align*}
F_{12} &= f_1(x_1,x_2) y_1^2 + f_2(x_1,x_2) y_1 y_2 + f_3(x_1,x_2) y_2^2  \\
F_{13} &= g_1(x_1,x_2) z_1^2 + g_2(x_1,x_2) 
   z_1 z_2 + g_3(x_1,x_2) z_2^2 
\end{align*}
\begin{enumerate}
\item We have $g_2 = f_2 + 2 h$ and $g_1 g_3 = f_1 f_3 + f_2 h + h^2$
  for some $h \in k[x_1,x_2]$.
\item If $f_1 = f_2 = 0$ and $g_1$, $g_2$ are multiples of $x_2^2$,
  then $F_{13}$ is either zero or factors as a product of binary
  quadratic forms.
\end{enumerate}
\end{lemma}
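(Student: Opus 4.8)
The plan is to reduce both parts to explicit formulae for $F_{12}$ and $F_{13}$. Fixing the $x$-variables turns $H$ into a $2\times2\times2$ cube $\mathcal S$ with entries linear in $x_1,x_2$, and $F_{12}$ and $F_{13}$ are the binary quadratic forms (in the $y$-, resp. $z$-variables) attached to two of the three slicings of $\mathcal S$. Writing $P_{kl}=\sum_{i,j}H_{ijkl}x_iy_j$, so that $F_{12}=P_{11}P_{22}-P_{12}P_{21}$, and likewise for $F_{13}$, I would express $f_1,f_2,f_3$ and $g_1,g_2,g_3$ as quadratic expressions in the eight linear forms $p_{kl}^{(j)}=\sum_i H_{ijkl}x_i$. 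A short calculation then produces $h$ explicitly --- it is the $2\times2$ minor $h=p_{21}^{(1)}p_{12}^{(2)}-p_{11}^{(2)}p_{22}^{(1)}$, one half of the difference of the middle coefficients of the two slicings --- and verifies $g_2=f_2+2h$ directly.

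For the second identity of part~(i), I would use the fact recalled in Section~\ref{sec:hypercubes} that the binary quartics in $x_1,x_2$ attached to $F_{12}$ and to $F_{13}$ coincide, i.e.\ $g_2^2-4g_1g_3=f_2^2-4f_1f_3$. Substituting $g_2=f_2+2h$ and expanding gives $4(g_1g_3-f_1f_3-f_2h-h^2)=0$. As $f_i,g_i,h$ are polynomials in the $H_{ijkl}$ and $x_1,x_2$ with integer coefficients, this is an identity in the integral domain $\Z[H_{ijkl}][x_1,x_2]$, so the factor $4$ may be cancelled and $g_1g_3=f_1f_3+f_2h+h^2$ holds there, hence over every field $k$. (Alternatively one checks this identity directly.)

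For part~(ii), substituting $f_1=f_2=0$ into part~(i) gives $g_2=2h$ and $g_1g_3=h^2$, so the binary quartic $g_2^2-4g_1g_3$ vanishes. Suppose first that $\charic k\ne2$. Then $h=g_2/2$ is, like $g_2$, divisible by $x_2^2$, so $h^2$ is divisible by $x_2^4$. Write $g_1=\alpha x_2^2$ with $\alpha\in k$, which is legitimate because $g_1$ is a binary quadratic form divisible by $x_2^2$. If $\alpha\ne0$, the relation $\alpha x_2^2g_3=h^2$ shows $g_3$ is divisible by $x_2^2$, so $F_{13}=x_2^2\,B(z)$ for some binary quadratic form $B$ in $z_1,z_2$; and if $\alpha=0$, then $h^2=g_1g_3=0$ forces $h=g_2=0$ and $F_{13}=g_3(x)\,z_2^2$. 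In either case $F_{13}$ is zero or a product of binary quadratic forms.

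The delicate point is $\charic k=2$, where $g_2=2h=0$ and the hypothesis on $g_2$ becomes vacuous; here I would argue directly from $g_1g_3=h^2$. Over the field $k(x_1,x_2)$ the binary quadratic form $F_{13}$ in $z$ has a repeated root, since its discriminant $g_2^2-4g_1g_3$ vanishes in every characteristic, so $F_{13}=g_1^{-1}(g_1z_1+hz_2)^2$; combined with $g_1=\alpha x_2^2$ and the divisibility $x_2\mid h$ (which follows from $g_1g_3=h^2$ by unique factorisation in $k[x_1,x_2]$), a short case analysis on $\gcd(g_1,g_3)$ and the $x$-degrees of the factors exhibits the desired factorisation of $F_{13}$. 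I expect this characteristic-$2$ bookkeeping to be essentially all the work; the remainder is a finite computation for part~(i) and elementary divisibility in the UFD $k[x_1,x_2]$ for part~(ii).
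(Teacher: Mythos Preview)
Your approach to part (i) and to part (ii) in characteristic $\ne 2$ is essentially the paper's own: exploit the equality $f_2^2-4f_1f_3=g_2^2-4g_1g_3$ in $\Z[H_{ijkl}][x_1,x_2]$ together with the explicit identity $g_2=f_2+2h$, and then in (ii) write $g_1=\alpha x_2^2$, deduce $h=\beta x_2^2$ from $g_2=2h$, and read off that either $g_1=g_2=0$ or $g_1,g_2,g_3$ are all multiples of $x_2^2$. Your addition of the explicit formula $h=p_{21}^{(1)}p_{12}^{(2)}-p_{11}^{(2)}p_{22}^{(1)}$ is a nice touch that the paper omits.

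The gap is in characteristic $2$, and it is a genuine one: your proposed ``short case analysis'' cannot produce a factorisation $F_{13}=q_1(x_1,x_2)\,q_2(z_1,z_2)$ in general. Over $\F_2$, take $p_{11}^{(1)}=p_{12}^{(2)}=x_2$, $p_{21}^{(1)}=p_{22}^{(2)}=x_1$ and the remaining $p_{kl}^{(j)}$ zero. Then $f_1=f_2=0$, $g_1=x_2^2$, $g_2=0$, $g_3=x_1^2$, so all hypotheses hold, yet
\[
F_{13}=x_2^2 z_1^2+x_1^2 z_2^2=(x_2 z_1+x_1 z_2)^2
\]
is a square of a $(1,1)$-form but is \emph{not} a product $q_1(x)\,q_2(z)$. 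Your formula $F_{13}=g_1^{-1}(g_1 z_1+h z_2)^2$ is correct and gives exactly this, but from $g_1g_3=h^2$ you can only extract $x_2\mid h$, not $x_2^2\mid h$, so the gcd bookkeeping you allude to will not force $g_3$ to be a multiple of $x_2^2$. The paper's own proof is equally silent here: its line ``$g_2=2\beta x_2^2$ and $\alpha x_2^2 g_3=\beta x_2^4$'' presupposes $h=\beta x_2^2$, which in characteristic $2$ does not follow from the hypotheses. For the application to Theorem~\ref{thmH} this does not matter, since there one only needs that $\CC_{F_{13}}$ does not have a \emph{unique} singular point, and the square of a $(1,1)$-form is everywhere singular; but you should be aware that the literal statement of (ii), read as ``factors as $q_1(x)q_2(z)$'', fails in characteristic $2$, and your plan will not rescue it.
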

\begin{proof} (i) We have already remarked that $f_2^2 - 4 f_1 f_3 =
  g_2^2 - 4 g_1 g_3$.  The result follows by considering the $f_i$ and
  $g_i$ as polynomials in $\Z[H_{ijkl}][x_1,x_2]$.

  (ii) By (i) we have $g_1 = \alpha x_2^2$, $g_2 = 2 \beta x_2^2$ and
  $\alpha x_2^2 g_3(x_1,x_2) = \beta x_2^4$. If $\alpha = 0$ then $g_1
  = g_2 = 0$, whereas if $\alpha \not=0$ then $g_1$, $g_2$, $g_3$ are
  multiples of $x_2^2$.
\end{proof}

We say that a $(2,2)$-form $F$ is {\em slender} if $F$ mod $\pi$ is
either zero, or factors as a product of binary quadratic forms.
Theorem~\ref{thm:min22} shows that if $F$ is non-minimal then either
$F$ mod $\pi$ defines a curve with a unique singular point, or $F$ is
slender. These possibilities are mutually exclusive.

We now complete the proof of Theorem~\ref{thmH}.  Applying the
transformation in (i) to $H$ has the effect of applying the
transformation in Theorem~\ref{thm:min22}(iii) to $F_{12}$.  The last
sentence of Theorem~\ref{thm:min22} tells us that, after applying this
transformation, $F_{12}$ mod $\pi$ is either zero, or factors as a
product of binary quadratic forms both of which have a repeated root.
In particular $F_{12}$ is slender.

We claim that $F_{13}$ is slender. If not then $F_{13}$ mod $\pi$
defines a curve with a unique singular point. By an $\OK$-equivalence
we may assume that this point is $((1:0),(1:0))$, and that $F_{12}
\equiv f_3(x_1,x_2) y_2^2 \pmod{\pi}$ for some binary quadratic form
$f_3$. Lemmas~\ref{lemB}(i) and~\ref{viaBQ}(ii) now show that $F_{13}$
is slender.

The same argument shows that all of the $F_{ij}$ are slender, except
possibly $F_{34}$. Since $F_{34}$ was unchanged by the
transformation~\eqref{trans12}, it follows that after at most two
iterations, all of the $F_{ij}$ are slender. In particular we cannot
return to the situation in (i), and this completes the proof.
\end{ProofOf}

\section{Minimisation Theorems}
\label{sec:minthm}

The algorithms in \cite{CFS} and~\cite{F5} for minimising genus one
curves of degree $2,3,4,5$ were complemented by a more theoretical
result. This stated that if a genus one curve is soluble over $K$ (or
more generally over an unramified extension) then the discriminant of
a minimal model is the same as that for the Jacobian elliptic curve.
In this section we prove the analogue of this result for
$(2,2)$-forms, $3 \times 3 \times 3$ cubes and $2 \times 2 \times 2
\times 2$ hypercubes.

In earlier papers, most notably \cite[Lemmas 3,4,5]{BSD}, the
minimisation algorithms and minimisation theorems were treated
together.  Following~\cite{CFS} we separate these out, and this leads
to clean results that work the same in all residue characteristics.
We phrase our result in terms of the level, as defined in
Lemmas~\ref{lem:lev22}, \ref{lem:lev333} and~\ref{lem:lev2222}.
\begin{thm}
\label{minthm}
Let $\Phi$ be a nonsingular $(2,2)$-form, $3 \times 3 \times 3$ cube,
or $2 \times 2 \times 2 \times 2$ hypercube defined over $K$.  If
$\CC_\Phi(K) \not= \emptyset$ then $\Phi$ has minimal level $0$.
\end{thm}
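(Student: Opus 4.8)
The plan is to reduce the theorem to a statement about the minimisation algorithms developed in Sections~\ref{sec:22}, \ref{sec:cubes} and~\ref{sec:hypercubes}, combined with the structure of the curves $\CC_\Phi$. The key observation is that by Lemmas~\ref{lem:lev22}, \ref{lem:lev333} and~\ref{lem:lev2222}, the level of a minimal model equals $0$ precisely when a minimal integral model can be found whose associated Weierstrass equation is itself minimal and on which the marked point(s) are integral. So the task is: given $\CC_\Phi(K)\not=\emptyset$, either exhibit such a model directly, or show that the minimisation algorithm terminates only at a model of level $0$.

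First I would treat the $(2,2)$-form case, which is the base case and should drive the other two. Suppose for contradiction that $F$ is a minimal integral $(2,2)$-form of positive level with $\CC_F(K)\not=\emptyset$. Since $F$ is minimal, Theorem~\ref{thm:min22} describes the reduction $f = F \bmod \pi$: it is not of the form in cases (i)--(iii) of that theorem, so (recalling the trichotomy in Theorem~\ref{thm:min22} together with the notion of ``slender'' from Section~\ref{sec:hypercubes}) $f$ must define a smooth curve in $\PP^1\times\PP^1$, or at worst something ruled out by minimality. The point is to use a $K$-rational point $Q\in\CC_F(K)$: after scaling and an $\OK$-equivalence we may assume $Q$ reduces to a point $\bar Q$ on $\CC_f$. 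If $\bar Q$ is a smooth point of $\CC_f$ one argues, via Hensel's lemma / the theory of minimal Weierstrass models, that the Weierstrass equation coming from Lemma~\ref{lem:inv22} is already minimal with $P$ integral, so $\ell(F)=0$. If $\bar Q$ is a singular point of $\CC_f$, then $f$ has a singular point, and I would show this forces $F$ to be non-minimal by Theorem~\ref{thm:min22}(iii) (or that the level can be decreased), contradicting minimality. The subtle point is that a priori $\bar Q$ could be a singular point of $\CC_f$ only if the reduction has such a point, and one must check that having a $K$-point through the singular reduction point is exactly the configuration handled by case (iii).

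For the $3\times 3\times 3$ cube and hypercube cases I would bootstrap: each such $\Phi$ determines ternary cubics $F_i$ (resp.\ $(2,2)$-forms $F_{ij}$), a $K$-point of $\CC_\Phi$ yields a $K$-point on the corresponding lower curve, and by Corollary~\ref{cor} (in the hypercube case) or the analogous remark after Theorem~\ref{thm:min333} (in the cube case) minimality of $\Phi$ is linked to minimality of the associated lower-dimensional models. For the cube, one additionally invokes the known minimisation theorem for ternary cubics from \cite{CFS}; for the hypercube, Corollary~\ref{cor} reduces everything to the $(2,2)$-form theorem just proved. The one genuine subtlety — and what I expect to be the main obstacle — is the contribution of $\kappa(P)$: the level-zero conclusion must absorb not only the ``extra'' discriminant from a non-minimal model of the curve, but also guarantee that the marked point(s) are integral on a minimal Weierstrass model, i.e.\ that $\kappa(P)=0$ (resp.\ $\max_i\kappa(P_i)=0$) is compatible with solubility. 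Handling the interaction between a $K$-rational point on $\CC_\Phi$ and the height/valuation of the marked points $P$, $P_i$ on the Jacobian — showing solubility of the curve forces these points to be integral after minimisation — is the technical heart, and I would isolate it as a separate lemma relating $\CC_\Phi(K)\not=\emptyset$ to the reduction type of $(E,P)$, likely via a direct analysis of the cases in the minimisation algorithms where the level is already forced to $0$.
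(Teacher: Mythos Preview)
Your proposal has genuine gaps in all three cases.

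\textbf{The $(2,2)$-form case.} Your contradiction argument assumes that if $F$ is minimal then its reduction $f$ cannot be as in cases (i)--(iii) of Theorem~\ref{thm:min22}. But Theorem~\ref{thm:min22} is a statement about \emph{non-minimal} forms; it does not say those reduction types occur only for non-minimal forms. A minimal $(2,2)$-form of positive level can perfectly well have $f$ singular, and a $K$-point can reduce to the singular point without forcing non-minimality. Remark~\ref{remcrit} exhibits exactly such ``critical'' minimal forms with positive level; they are insoluble, but the point is that the reduction type alone does not distinguish them from soluble minimal forms. So the case analysis on $\bar Q$ smooth/singular does not close.

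\textbf{The cube case.} You invoke an ``analogous remark after Theorem~\ref{thm:min333}'' playing the role of Corollary~\ref{cor}. There is no such analogue: the Remark after Theorem~\ref{thm:min333} says only that minimality of some $F_i$ implies minimality of $S$, and explicitly notes that the converse fails. So knowing (from \cite{CFS}) that each ternary cubic $F_i$ can be minimised to level~$0$ by a transformation of ternary cubics does not give you a transformation of the cube $S$ with the same effect.

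\textbf{The role of $\kappa(P)$.} You identify ``showing $\kappa(P)=0$'' as the technical heart, but this is a misreading of Lemmas~\ref{lem:lev22}--\ref{lem:lev2222}. The quantity $\kappa(P)$ is an invariant of the pair $(E,P)$ and does not change under $K$-equivalence of $\Phi$; the level is \emph{defined} with $12\kappa(P)$ already subtracted. Level~$0$ means $v(\Delta(\Phi)) = v(\Delta_E) + 12\kappa(P)$, not $v(\Delta(\Phi)) = v(\Delta_E)$; one never needs $\kappa(P)=0$.

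\textbf{What the paper does instead.} The paper's proof is constructive rather than by contradiction. Solubility means $\CC_\Phi \cong E$ embedded in $\PP^{n-1}\times\PP^{n-1}$ via $|D|\times|D'|$ for $K$-rational divisors $D,D'$ of degree $n\in\{2,3\}$, and the $K$-equivalence class of $\Phi$ depends only on $(E,[D],[D'])$. One then writes $E$ in an integral Weierstrass form with $P=\operatorname{sum}(D'-D)=(0,0)$, and exhibits by hand an integral $(2,2)$-form or cube with the same discriminant, first in the special case $D\sim n\cdot 0_E$. The general case is reduced to this by two lemmas that pass between $(2,2)$-forms and cubes by adding or subtracting a $K$-point $Q$ (moving between $|D|\times|D'|$ and $|D\pm Q|\times|D'\pm Q|$) while preserving the discriminant. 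Only the hypercube case is handled by bootstrap, via Corollary~\ref{cor}, exactly as you suggest.
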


\begin{Remark} \label{remunram} The algorithms in
  Sections~\ref{sec:22},~\ref{sec:cubes} and~\ref{sec:hypercubes} show
  that the minimal level is unchanged by an unramified field
  extension.  The hypothesis in Theorem~\ref{minthm} may therefore be
  weakened to solubility over an unramified field extension.  We give 
  examples below to show that this hypothesis cannot be removed
  entirely.
\end{Remark}

Let $E/K$ be an elliptic curve and $n \in \{2,3\}$.  Let $D$ and $D'$
be $K$-rational divisors on $E$ of degree $n$.  The image of $E$ in
$\PP^{n-1} \times \PP^{n-1}$ via $|D| \times |D'|$ is defined by a
$(2,2)$-form in the case $n=2$, and three bilinear forms in the case
$n=3$.  The coefficients of the latter give a $3 \times 3 \times 3$
cube.  We note that the $(2,2)$-form, respectively $3 \times 3 \times
3$ cube, is uniquely determined up to $K$-equivalence by the triple
$(E,[D],[D'])$, where $[D]$ denotes the linear equivalence class of
$D$.  Moreover every $(2,2)$-form, respectively $3 \times 3 \times 3$
cube, defining a non-singular genus one curve with a $K$-rational
point, arises in this way. Therefore the first two cases of
Theorem~\ref{minthm} are immediate from the following theorem.

We write $\operatorname{sum} : \Div_K(E) \to E(K)$ for the map that
sends a formal sum of points to its sum using the group law on $E$.

\begin{thm}
\label{minEDD}
Let $E/K$ be an elliptic curve with integral Weierstrass equation
\begin{equation}
\label{Weqn1}
y^2 + a_1 x y + a_3 y = x^3 + a_2 x^2 + a_4 x 
\end{equation}
and let $P= (0,0) \in E(K)$. Let $D, D' \in \Div_K(E)$ be divisors of
degree $n \in \{2,3\}$ with $\operatorname{sum} (D' - D) = P$.  Then
$(E,[D],[D'])$ may be represented by an integral $(2,2)$-form, or $3
\times 3 \times 3$ cube, with the same discriminant as~(\ref{Weqn1}).
\end{thm}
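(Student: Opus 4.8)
The plan is to prove Theorem~\ref{minEDD} by an explicit construction. The key point is that the model will be written directly in terms of $a_1,\dots,a_4$ and the coordinates of the points involved, and that its discriminant will equal $\Delta_W$, the discriminant of~\eqref{Weqn1}, as a \emph{polynomial identity}; thus the (possibly positive) level of the model is accounted for automatically and no separate minimisation is needed. First I would normalise the divisor classes. Since $[D]\mapsto\operatorname{sum}(D)$ identifies $\Pic^{n-1}_K(E)$ with $E(K)$, subtracting $(n-1)$ times the class of $(O)$ shows that $D\sim(n-1)(O)+(R)$ and $D'\sim(n-1)(O)+(R')$ for unique $R,R'\in E(K)$, and the hypothesis $\operatorname{sum}(D'-D)=P$ forces $R'=R+P$. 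As $|D|\times|D'|$ and $\Delta_W$ depend only on the classes, we may assume $D,D'$ have this shape.

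Next I would exhibit a basis of $L(D)$ adapted to the Weierstrass model: the function $1$, the function $x$ (needed only when $n=3$), and a function $f_R$ with polar divisor $(O)+(R)$. For $R\neq O$ one can take $f_R=(y+y_R+a_1x_R+a_3)/(x-x_R)$, and for $R=O$ one takes $f_O=x$ if $n=2$ and $f_O=y$ if $n=3$; a basis of $L(D')$ is obtained in the same way from $f_{R'}$. The image of $E$ in $\PP^{n-1}\times\PP^{n-1}$ under $|D|\times|D'|$ is then cut out by the relations among the nine products of these two bases in $L(D+D')$ (when $n=3$), respectively among the nine bidegree-$(2,2)$ monomials in them inside $L(2D+2D')$ (when $n=2$): a one-dimensional space of relations giving the $(2,2)$-form, respectively a three-dimensional space giving the three bilinear forms, i.e.\ the Rubik's cube. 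In either case the relations are found explicitly by eliminating $x$ and $y$ using the Weierstrass equation, yielding a model whose coefficients are polynomials in $a_1,\dots,a_4$ and the coordinates of $R$ and $R'$.

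The step I expect to be the main obstacle is integrality. The formula for $f_{R'}$ together with the group-law expressions for $R'=R+P$ introduces denominators such as $x_R$ or $x_R-x_{-P}$, and moreover $R$ need not have coordinates in $\OK$ on~\eqref{Weqn1}. One therefore has to let the choice of bases depend on the reduction mod $\pi$ of the configuration $\{O,\pm R,\pm R',P\}$ — replacing $f_R$ or $f_{R'}$ by another generator of the relevant linear system, or clearing denominators against the Weierstrass relation, or working with $D$ directly as an effective divisor — so that the coefficients of the final model lie in $\OK$. A short case analysis, running over the coincidences mod $\pi$ and over the degenerate cases $R=O$, $R'=O$, $x_R=0$, should produce an integral model in every case.

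It then remains to check that the discriminant of the model is $\Delta_W$. Since $c_4^3-c_6^2=1728\,\Delta$ holds for every Weierstrass equation, it suffices to verify that $c_4$ and $c_6$ of the model — which by Lemma~\ref{lem:inv22}, resp.\ Lemma~\ref{lem:inv333}, are the $c_4,c_6$ of a Weierstrass equation whose $a$-invariants are explicit polynomials in $a_1,\dots,a_4$ and the coordinates of $R$ — coincide with the $c_4,c_6$ of~\eqref{Weqn1}. This is a polynomial identity in $\Z[a_1,\dots,a_4,x_R,y_R]$ modulo the relation placing $(x_R,y_R)$ on~\eqref{Weqn1}, and can be confirmed by the same computer-algebra methods used in the proofs of those lemmas; as a consistency check one also verifies that the marked point $(\xi,\eta)$ of the model reduces to $P=(0,0)$, the point forced by $\operatorname{sum}(D'-D)=P$. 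This proves Theorem~\ref{minEDD}; the cases $n=2,3$ of Theorem~\ref{minthm} then follow, as remarked after its statement, by applying this with~\eqref{Weqn1} taken to be an integral Weierstrass equation with $P$ at the origin whose discriminant realises $v(\Delta_E)+12\kappa(P)$, so that the constructed model has level~$0$.
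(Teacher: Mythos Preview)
Your approach differs substantially from the paper's. The paper proves by explicit construction only the base case $D\sim n\cdot 0_E$, $D'\sim(n-1)\cdot 0_E+P$; there the model depends only on $a_1,\ldots,a_4$ and integrality is immediate. The general case is handled by a bootstrap between the two degrees. Lemma~\ref{lem:3->2} takes an integral cube for $(E,[D],[D'])$ and, after moving a point $Q\in C(K)$ to $((0{:}0{:}1),(0{:}0{:}1))$ using transitivity of $\SL_3(\OK)$ on $\PP^2(K)$, produces an integral $(2,2)$-form for $(E,[D-Q],[D'-Q])$ with the same discriminant; Lemma~\ref{lem:2->3} goes the other way. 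Writing $D\sim 3\cdot 0_E-Q$ (respectively $D\sim 2\cdot 0_E+Q$) thus reduces the general $n=2$ case to the special $n=3$ case, and vice versa. No dependence on the coordinates of an auxiliary point ever enters the coefficients.

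By contrast you parametrise by $R\in E(K)$ with $D\sim(n-1)(O)+(R)$ and try to write the model in the coordinates of $R$ and $R'=R+P$. The step you correctly flag as the main obstacle is a genuine gap that you have not filled. The coordinates of $R$ need not lie in $\OK$; your function $f_R$ carries $x-x_R$ in the denominator; and $R'=R+P$ brings in the group-law denominators $x_R^{-1}$ or $(x_R-x_{-P})^{-1}$. Saying that one will ``let the choice of bases depend on the reduction mod~$\pi$ of the configuration'' and that ``a short case analysis \ldots\ should produce an integral model in every case'' is a plan rather than a proof: you have not exhibited the finite list of cases, nor shown that each yields $\OK$-coefficients, nor checked that the resulting models all have discriminant exactly $\Delta_W$ (your polynomial-identity argument for $c_4,c_6$ was set up only for the generic basis). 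The paper's device of passing through the other degree and using $\SL_n(\OK)$-transitivity absorbs the point $Q$ into an $\OK$-equivalence and thereby avoids this case analysis altogether.
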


We start by proving Theorem~\ref{minEDD} in the case $D \sim n . 0_E$.
Since $\operatorname{sum} (D' - D) = P$ we have $D' \sim (n-1).0_E +
P$. We put
\[f = \frac{y + a_1 x + a_3}{x} = \frac{x^2 + a_2 x + a_4}{y}\] and
split into the cases $n=2$ and $n=3$.

\medskip

\paragraph{{\bf Case $n=2$}} The embedding $E \to \PP^1 \times \PP^1$
via $|D| \times |D'|$ is given by
\[ (x,y) \mapsto ((1:x),(1:f)).\] The image is defined by the
$(2,2)$-form
\[ F(x_1,x_2; y_1,y_2) =  x_2^2 y_1^2 - x_1 x_2 y_2^2 
  + x_1 y_1 (a_1 x_2 y_2 + a_2 x_2 y_1 + a_3 x_1 y_2 + a_4 x_1 y_1),
\]
with the same discriminant as~\eqref{Weqn1}.

\medskip

\paragraph{{\bf Case $n=3$}} The embedding $E \to \PP^2 \times \PP^2$
via $|D| \times |D'|$ is given by
\[ (x,y) \mapsto ((1:x:y),(1:x:f)). \] The image is defined by
bilinear forms
\begin{align*}
 B_1(x_1,x_2,x_3; y_1,y_2,y_3) &=  x_2 y_1 - x_1 y_2, \\
 B_2(x_1,x_2,x_3; y_1,y_2,y_3) &=  
   x_3 y_1  + a_1 x_2 y_1 + a_3 x_1 y_1 - x_2 y_3, \\
 B_3(x_1,x_2,x_3; y_1,y_2,y_3) &=  
   x_2 y_2 + a_2 x_2 y_1 + a_4 x_1 y_1 - x_3 y_3.
\end{align*}
The coefficients of $B_1, B_2, B_3$ give a $3 \times 3 \times 3$ cube,
and this has the same discriminant as~\eqref{Weqn1}.

\begin{lemma}
\label{lem:3->2}
Let $S$ be a $3 \times 3 \times 3$ cube corresponding to bilinear
forms $B_1, B_2, B_3$, defining $C\subset \PP^2 \times \PP^2$ a smooth
curve of genus one, embedded via $|D| \times |D'|$.
\begin{enumerate}
\item If $Q = ((0:0:1),(0:0:1)) \in C(K)$ then for $i=1,2,3$ we can
write
\[ B_i = L_i(y_1,y_2) x_3 + M_i(x_1,x_2) y_3 +
N_i(x_1,x_2;y_1,y_2). \]
\item The image of $C$ in $\PP^1 \times \PP^1$ via $|D-Q| \times
  |D'-Q|$ is defined by the $(2,2)$-form
  \[ F(x_1,x_2;y_1,y_2) = \left| \begin{matrix}
      L_1 & M_1 & N_1 \\
      L_2 & M_2 & N_2 \\
      L_3 & M_3 & N_3
    \end{matrix} \right|. \]
\item We have $\Delta(F) = \Delta(S)$.
\end{enumerate}
\end{lemma}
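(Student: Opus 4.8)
The plan is to prove the three parts in order, with parts (i) and (ii) being essentially bookkeeping and part (iii) carrying the real content. For part (i), the point $Q = ((0:0:1),(0:0:1))$ lying on $C$ means each bilinear form $B_i$ vanishes at $Q$. Writing $B_i = \sum_{j,k} b^{(i)}_{jk} x_j y_k$, evaluation at $Q$ forces $b^{(i)}_{33} = 0$ for each $i$. Collecting the remaining monomials, every term of $B_i$ is divisible by $x_1$, $x_2$, $y_1$, or $y_2$; grouping the terms involving $x_3$ (these are linear in $y_1, y_2$ since the $x_3 y_3$ term is gone), the terms involving $y_3$ but not $x_3$ (linear in $x_1, x_2$), and the rest, gives the stated decomposition $B_i = L_i(y_1,y_2) x_3 + M_i(x_1,x_2) y_3 + N_i(x_1,x_2;y_1,y_2)$.

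For part (ii), the standard description of the linear systems $|D-Q|$ and $|D'-Q|$ on a genus one curve embedded by $|D| \times |D'|$: projecting $\PP^2 \times \PP^2$ away from $Q$ in each factor realises $|D-Q|$ and $|D'-Q|$ as the maps $(x_1:x_2:x_3) \mapsto (x_1:x_2)$ and $(y_1:y_2:y_3) \mapsto (y_1:y_2)$ on $C$. To find the defining $(2,2)$-form of the image in $\PP^1 \times \PP^1$, I would view the three equations $B_i = 0$, i.e.
\[ L_i(y_1,y_2) x_3 + M_i(x_1,x_2) y_3 = -N_i(x_1,x_2;y_1,y_2) \quad (i=1,2,3), \]
as a linear system in the ``unknowns'' $x_3$, $y_3$, $1$. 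For a point of $C$ (away from the locus where $x_3$ or $y_3$ is the vanishing coordinate, which one checks is not dense) this system has a nontrivial solution $(x_3, y_3, 1)$, so its $3 \times 3$ coefficient matrix is singular; expanding the determinant gives exactly $F(x_1,x_2;y_1,y_2) = 0$ where $F$ is the claimed determinant. A dimension/degree count (the determinant is visibly of bidegree $(2,2)$, and the genus one curve has image a $(2,2)$-curve) shows this is the full defining equation, not a proper multiple.

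For part (iii), the identity $\Delta(F) = \Delta(S)$, I expect the main obstacle. The clean approach is to invoke $K$-equivalence invariance: both $S$ and $F$ have invariants attached (via $c_4, c_6, \Delta$ scaled as in Section \ref{sec:cubes} and Section \ref{sec:22} respectively), and the construction in (i)--(ii) is natural enough to be compatible with the group actions --- changing $Q$ by a $K$-point, or applying $\GGG$-equivalences fixing the shape, changes $S$ and $F$ by matching equivalences. Since we have already recorded formulae showing that both $S$ and $F$ determine the \emph{same} pair $(E, [D], [D'])$ (up to the relevant equivalence), and since the level is defined so that $v(\Delta)$ is determined by $(E,P)$ plus the level, it suffices to verify $\Delta(F) = \Delta(S)$ on a single convenient representative of each equivalence class, or better, to check it as a polynomial identity in the $b^{(i)}_{jk}$. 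Concretely: starting from the explicit $S$ with bilinear forms $B_1, B_2, B_3$ from the case $D \sim n.0_E$ computed just above (where we know $\Delta(S) = \Delta$ of \eqref{Weqn1}), apply a $\GL_3$-translation moving a general point to $Q$, form $F$ by the determinant recipe, and confirm by computer algebra that $\Delta(F) = \Delta(S)$; equivariance then propagates this to all cubes arising from such embeddings. The genuinely delicate point is making the ``compatibility with group actions'' precise --- in particular checking that the determinant construction intertwines the residual $\GL_2 \times \GL_2$ actions (those fixing $Q$) on the cube side with the $(2,2)$-form action on the other side, including the character/weight matching so that discriminants agree on the nose rather than up to a constant --- and I would handle this either by the symbolic identity or by a short explicit computation of how $L_i, M_i, N_i$ transform.
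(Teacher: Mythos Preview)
Your treatment of (i) and (ii) is correct and matches the paper, which simply declares these clear after identifying the map $C \to \PP^1 \times \PP^1$ as projection onto $((x_1:x_2),(y_1:y_2))$.

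For (iii) the paper takes the shortest route: a single generic computer-algebra check that $c_4(F) = c_4(S)$ and $c_6(F) = c_6(S)$ hold as polynomial identities in the entries of $S$ (subject only to the three constraints $b^{(i)}_{33}=0$). You do mention this option (``or better, to check it as a polynomial identity''), but you then devote most of your effort to an equivariance-plus-representative strategy: start from the explicit cube attached to $D \sim 3.0_E$, move a general point to $Q$, check $\Delta(F)=\Delta(S)$ there, and propagate by compatibility with the group actions. That route is not wrong, but it is more work and carries exactly the gap you flag yourself --- you must verify that the determinant construction intertwines the residual $\GL_2 \times \GL_2$ actions with the correct character, and that your chosen family is Zariski-dense in the space of admissible cubes. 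The direct symbolic identity over $\Z[b^{(i)}_{jk}]$ proves the claim for all base rings simultaneously and sidesteps all of this bookkeeping; that is what the paper does, and it is the version you should keep.
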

\begin{proof}
  We map $C \to \PP^1 \times \PP^1$ via $((x_1:x_2),(y_1:y_2))$.  The
  first two statements are clear. For (iii) we checked by a generic
  calculation that $F$ and $S$ have the same invariants $c_4$ and
  $c_6$.
\end{proof}

\begin{lemma}
\label{lem:2->3}
Let $F$ be a $(2,2)$-form defining $C \subset \PP^1 \times \PP^1$ a
smooth curve of genus one, embedded via $|D| \times |D'|$.

\begin{enumerate}

\item If $Q= ((1:0),(1:0)) \in C(K)$ then we can write
\begin{equation*}
F(x_1,x_2;y_1,y_2) = \begin{pmatrix} x_1^2 & x_1 x_2 & x_2^2 \end{pmatrix}
\begin{pmatrix} 
0 & a_{12} & a_{13} \\
a_{21} & a_{22} & a_{23} \\
a_{31} & a_{32} & a_{33} 
\end{pmatrix} \begin{pmatrix} y_1^2 \\ y_1y_2 \\ y_2^2 \end{pmatrix}.
\end{equation*}
\item The image of $C$ in $\PP^2 \times \PP^2$ via $|D+Q| \times
  |D'+Q|$ is defined by the $3 \times 3 \times 3$ cube $S$ with
  entries
 \begin{equation*}
    \begin{pmatrix}
      0 &  1 & 0 \\
      1 & a_{22} & a_{23} \\
      0 & a_{32} & a_{33}
\end{pmatrix} \hspace{2em}
\begin{pmatrix}
  0 & 0 & 0 \\
  0 & a_{12} &  a_{13}  \\
  -1 & 0 & 0
\end{pmatrix} \hspace{2em}
\begin{pmatrix}
  0 & 0 &  -1 \\
  0 & a_{21} & 0 \\
  0 & a_{31} & 0
\end{pmatrix}. 
\end{equation*}
\item We have $\Delta(S) = \Delta(F)$.
\end{enumerate}
\end{lemma}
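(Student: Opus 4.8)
The plan is to treat Lemma~\ref{lem:2->3} as the inverse of Lemma~\ref{lem:3->2}, deducing parts (ii) and (iii) from that lemma together with a short direct computation. Part~(i) is immediate: the point $Q=((1:0),(1:0))$ lies on $\CC$ exactly when $F(1,0;1,0)=0$, and by the matrix description~\eqref{mat:22} this says $a_{11}=0$.

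For part~(ii) I would first produce the two new coordinates. Writing $F=F_1(y)x_1^2+F_2(y)x_1x_2+F_3(y)x_2^2$ as a binary quadratic in $x_1,x_2$ (so $F_1=a_{12}y_1y_2+a_{13}y_2^2$ since $a_{11}=0$), the bilinear equations attached to $S$ can be solved on $\CC$ for $x_3$ and $y_3$ as rational functions; for instance one obtains on $\CC$ a relation of the shape
\[
 x_3\,(x_1y_1-a_{13}a_{31}\,x_2y_2)=x_2y_2\,(a_{12}x_1+a_{13}a_{21}\,x_2),
\]
together with a companion relation for $y_3$. One checks that $x_3$ and $y_3$ have poles bounded by $D+Q$ and $D'+Q$ and are, modulo $F$, independent of $x_1,x_2$ respectively $y_1,y_2$, so that $(x_1:x_2:x_3)$ and $(y_1:y_2:y_3)$ realise the complete linear systems $|D+Q|$ and $|D'+Q|$; since a degree-$3$ line bundle on a genus one curve is very ample, the resulting morphism $\CC\to\PP^2\times\PP^2$ is a closed embedding. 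A substitution then shows that the three bilinear forms attached to $S$ vanish on the image, and that they cut it out exactly follows once $\Delta(S)=\Delta(F)\neq 0$ is known, since then $S$ defines a smooth, hence irreducible, genus one curve in $\PP^2\times\PP^2$ of the same bidegree $(3,3)$ as the image of $\CC$.

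It remains to establish (iii) and to check that the polarisations match, and here I would invoke Lemma~\ref{lem:3->2} directly. A quick substitution shows that the point $((1:0:0),(1:0:0))$ lies on the curve defined by $S$, and the coordinate swap $(x_1:x_2:x_3)\leftrightarrow(x_3:x_2:x_1)$ in each of the two $\PP^2$ factors --- which leaves $\Delta$ unchanged, as $\Delta$ has weight $12$ --- moves this point to $((0:0:1),(0:0:1))$. Lemma~\ref{lem:3->2}(i)--(iii) then applies to the transformed cube; computing the resulting $3\times3$ determinant shows that it equals $F$ up to a sign and a permutation of the variables, neither of which changes the discriminant, so $\Delta(S)=\Delta(F)$. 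The same application of Lemma~\ref{lem:3->2} identifies the curve of $S$, together with the pull-backs of $\OO_{\PP^2}(1)$, with $(\CC,|D+Q|,|D'+Q|)$, provided one verifies that the point $((1:0:0),(1:0:0))$ on the curve of $S$ maps to $Q$ under the projection $((x_2:x_3),(y_2:y_3))$ back to $\PP^1\times\PP^1$.

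The step I expect to be the main obstacle is precisely this identification of divisor classes in (ii): the vanishing of the $B_i$, the determinant identity, and the discriminant comparison are all mechanical, but confirming that $S$ encodes exactly $(\CC,|D+Q|,|D'+Q|)$ --- rather than a twist, or a curve whose classes have been shifted by a different point --- and doing so uniformly in all residue characteristics, is the delicate point. Routing the argument through Lemma~\ref{lem:3->2} reduces this to a single limiting computation at the point $((1:0:0),(1:0:0))$.
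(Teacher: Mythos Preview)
Your treatment of (i) is fine, and your route to (iii) --- swap $x_1\leftrightarrow x_3$ and $y_1\leftrightarrow y_3$ so that $((0{:}0{:}1),(0{:}0{:}1))$ lies on the transformed cube, apply Lemma~\ref{lem:3->2}(iii), and observe that the resulting $(2,2)$-form is $-F(x_2,x_1;y_2,y_1)$ --- is correct and is a pleasant alternative to the paper's bare generic verification of $c_4$ and $c_6$.

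The genuine gap is in your direct construction for (ii). You keep the $\PP^1$-coordinates $x_1,x_2,y_1,y_2$ as the \emph{first} two coordinates in each $\PP^2$ factor, solve $B_2=B_3=0$ for $x_3,y_3$, and then assert that ``a substitution shows the three bilinear forms attached to $S$ vanish on the image''. This is false: $B_1$ does not vanish on $\CC$ after that substitution. For instance with $a_{13}=a_{32}=a_{33}=0$, $a_{23}=-1$, $a_{31}=1$ (the paper's ``special case'' with $a_4=0$, which is nonsingular for generic $a_{12},a_{21},a_{22}$) your formulae give $x_3=a_{12}x_2y_2/y_1$ and $y_3=x_2y_2(a_{21}y_1+a_{12}y_2)/(x_1y_1)$, and then
\[
B_1\cdot x_1y_1 \;=\; x_1x_2y_1^2+x_1^2y_1y_2+a_{22}x_1x_2y_1y_2-a_{21}x_2^2y_1y_2-a_{12}x_2^2y_2^2,
\]
which is not a scalar multiple of $F$ (compare the $x_2^2y_1^2$ coefficients: $0$ versus $1$). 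The underlying reason is an indexing mismatch: the curve of $S$ passes through $((1{:}0{:}0),(1{:}0{:}0))$, so the projection compatible with Lemma~\ref{lem:3->2} is via the \emph{last} two coordinates $(x_2{:}x_3),(y_2{:}y_3)$, and the image of the curve of $S$ under $(x_1{:}x_2),(y_1{:}y_2)$ is simply not $\CC$. Your coordinate swap in the (iii) argument gets this right; the earlier paragraph does not.

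Your alternative route to (ii) via Lemma~\ref{lem:3->2} (after the swap) can indeed be made to work, but it genuinely requires the limiting computation you flag: at $((1{:}0{:}0),(1{:}0{:}0))$ the projection formula is $0/0$ and one must use $B_1=0$ to resolve it and see that the image is $Q$. The paper avoids all of this with a more direct construction: it identifies the residual points $R=((1{:}0),(-a_{13}{:}a_{12}))$ and $R'=((-a_{31}{:}a_{21}),(1{:}0))$ on $\CC$ (so $D\sim Q+R$, $D'\sim Q+R'$), takes the $(1,1)$-forms on $\PP^1\times\PP^1$ vanishing at $R'$, respectively at $R$, as explicit bases of $H^0(\CC,D+Q)$ and $H^0(\CC,D'+Q)$, writes down the resulting morphism to $\PP^2\times\PP^2$, and verifies by hand that the $B_i$ vanish on the image.
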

\begin{proof}
  We have $D \sim Q + R$ and $D' \sim Q+ R'$ where $R=
  ((1:0),(-a_{13}:a_{12}))$ and $R' = ((-a_{31}:a_{21}),(1:0))$.
  Choosing bases for the space of bilinear forms vanishing at $R'$,
  and the space of bilinear forms vanishing at $R$, we find that the
  map $C \to \PP^2 \times \PP^2$ via $|D+Q| \times |D'+Q|$ is given by
\begin{align*}
  \big((x_1:x_2),(y_1:y_2)\big) \mapsto \big(
((a_{21} x_1 & + a_{31} x_2) y_1 : x_1 y_2  : x_2 y_2 ), \\
& (x_1 (a_{12} y_1 + a_{13} y_2) : x_2 y_1 : x_2 y_2 )\big). 
\end{align*}
The image is defined by
\begin{align*}
B_1 &= x_2 y_1 + x_1 y_2 + a_{22} x_2 y_2 + a_{32} x_3 y_2 + a_{23} x_2 y_3 + a_{33} x_3 y_3 \\
B_2 &= -x_3 y_1 + a_{12} x_2 y_2 + a_{13} x_2 y_3, \\
B_3 &= -x_1 y_3 + a_{21} x_2 y_2 + a_{31} x_3 y_2.
\end{align*}
The coefficients of these forms give the cube $S$ in the statement of
the lemma. Again we prove (iii) by a generic calculation.
\end{proof}

\begin{ProofOf}{Theorem~\ref{minEDD}}
We split into the cases $n=2$ and $n=3$.

\medskip

\paragraph{{\bf Case $n=2$}}
We have $D \sim 3.0_E - Q$ for some $Q \in E(K)$. By the special case
of the theorem already established, there is an integral $3 \times 3
\times 3$ cube representing $(E,[D+Q],[D'+Q])$, with the same
discriminant as~\eqref{Weqn1}. We have $E \subset \PP^2 \times
\PP^2$. Since $\SL_3(\OK)$ acts transitively on $\PP^2(K)$ we may
assume $Q = ((0:0:1),(0:0:1))$. Then Lemma~\ref{lem:3->2} give an
integral $(2,2)$-form representing $(E,[D],[D'])$, with the same
discriminant as~\eqref{Weqn1}.

\medskip

\paragraph{{\bf Case $n=3$}}
We have $D \sim 2.0_E + Q$ for some $Q \in E(K)$. By the special case
of the theorem already established, there is an integral $(2,2)$-form
representing $(E,[D-Q],[D'-Q])$, with the same discriminant
as~\eqref{Weqn1}. We have $E \subset \PP^1 \times \PP^1$. Since
$\SL_2(\OK)$ acts transitively on $\PP^1(K)$ we may assume $Q =
((1:0),(1:0))$. Then Lemma~\ref{lem:2->3} give an integral $3 \times 3
\times 3$ cube representing $(E,[D],[D'])$, with the same discriminant
as~\eqref{Weqn1}.
\end{ProofOf}

This completes the proof of Theorem~\ref{minthm} for $(2,2)$-forms and
$3 \times 3 \times 3$ cubes. We now deduce the result for hypercubes
from the result for $(2,2)$-forms.  Let $H$ be a non-singular
hypercube over $K$, with associated $(2,2)$-forms $F_{ij}$. The genus
one curve $\CC_H$ is that defined by any of the $F_{ij}$. So if
$\CC_H(K) \not=0$ then the result for $(2,2)$-forms shows that each
$F_{ij}$ has minimal level $0$.  By the definitions in
Lemmas~\ref{lem:lev22} and~\ref{lem:lev2222}, we have $\ell(H) = \min
\ell(F_{ij})$. It follows by Corollary~\ref{cor} that $H$ has minimal
level $0$.

\begin{Remark}
\label{remcrit}
We give some examples to show that the minimal level can be
positive. We assume for convenience that $\charic(k) \not=2,3$.  A
binary quartic, or ternary cubic is called critical (see \cite[Section
5]{CFS}) if the valuations of its coefficients satisfy
\[ 
=1 \quad \ge 2 \quad \ge 2 \quad \ge 3 \quad =3 \qquad \text{ or }
\qquad
\begin{array}{cccccccc}
  & & & \multicolumn{2}{c}{ = 2 } \\
  & & \multicolumn{2}{c}{ \ge 2 } & \multicolumn{2}{c}{ \ge 2 } \\
  & \multicolumn{2}{c}{ \ge 1 } & \multicolumn{2}{c}{ \ge 1 } & 
  \multicolumn{2}{c}{ \ge 2 } \\
  \multicolumn{2}{c}{ =0 } & \multicolumn{2}{c}{ \ge 1 } & 
  \multicolumn{2}{c}{ \ge 1 } & \multicolumn{2}{c}{ = 1 }
\end{array} 
\]
We now define a {\em critical} $(2,2)$-form, $3 \times 3 \times 3$ cube or
$2 \times 2 \times 2 \times 2$ hypercube, to be one whose coefficients
have valuations satisfying
\[
\begin{matrix}
  = 2 & \geq 2 & = 1 \\
  \geq 2 & \geq 1 & \geq 1 \\
  = 1 & \geq 1 & = 0
\end{matrix}
\]
or
 \begin{equation*}
    \begin{matrix}
      \geq 2 &    = 1 & \geq 1 \\
      = 1 & \geq 1 & \geq 1 \\
      \geq 1 & \geq 1 & = 0
\end{matrix} \hspace{2em}
\begin{matrix}
  = 1 & \geq 1 & \geq 1 \\
  \geq 1 & \geq 1 &    = 0  \\
  \geq 1 & = 0 & \geq 0
\end{matrix} \hspace{2em}
\begin{matrix}
  \geq 1 & \geq 1 &    = 0 \\
  \geq 1 &    = 0 & \geq 0 \\
  = 0 & \geq 0 & \geq 0
\end{matrix} 
\end{equation*}
or
\[
\begin{array}{cc|cc}
  \geq 2 &     = 1 &    = 1 & \geq 1 \\
  = 1 &  \geq 1 & \geq 1 &  = 0 \\ \hline
  = 1 &  \geq 1 & \geq 1 &  = 0 \\
  \geq 1 &     = 0 &    = 0 & \geq 0 
\end{array} 
\]
Either by using our algorithms, or observing that the corresponding
binary quartics and ternary cubics are critical, we see that any such
model $\Phi$ is minimal. However by applying the transformation
\[ [\pi^{-2} ,A_2,A_2], [\pi^{-4/3} A_3,A_3,A_3] \text{ or }
[\pi^{-3/2} A_2, A_2,A_2,A_2], \] where
\[ A_2 =  \begin{pmatrix} 1 & \\
  & \pi^{1/2} \end{pmatrix} \text{ and }
A_3 = \begin{pmatrix} 1 & & \\
  & \pi^{1/3} & \\ & & \pi^{2/3} \end{pmatrix}, \] we see that
$I(\Phi) \equiv 0 \pmod{\pi^p}$ for any invariant $I$ of weight
$p$. Therefore $\Phi$ has positive level.
\end{Remark}

\end{document}